\documentclass[10pt]{amsart}
      \usepackage{amsmath,amsfonts}
      %

             \hoffset -1.3cm
      \voffset -1cm
\textwidth 16truecm
      \textheight 22.5truecm

\def\rg{\hbox to 30pt{\rightarrowfill}}
\def\lg{\hbox to 30pt{\leftarrowfill}}

      \parskip\smallskipamount
          \newtheorem{theorem}{Theorem}[section]

      \newtheorem{lemma}[theorem]{Lemma}

      \makeatletter
      \@addtoreset{equation}{section}
      \makeatother

      \newcommand{\CC}{{\mathbb C}}
      \newcommand{\NN}{{\mathbb N}}
      \newcommand{\QQ}{{\mathbb Q}}
      
      \newcommand{\DD}{{\mathbb D}}
      
      \newcommand{\FF}{{\mathbb F}}
      \newcommand{\TT}{{\mathbb T}}

      \newcommand{\cA}{{\mathcal A}}
      
      \newcommand{\cC}{{\mathcal C}}
      \newcommand{\cD}{{\mathcal D}}
      \newcommand{\cE}{{\mathcal E}}

      \newcommand{\cH}{{\mathcal H}}
      \newcommand{\cI}{{\mathcal I}}
      \newcommand{\cK}{{\mathcal K}}
      
      \newcommand{\cM}{{\mathcal M}}
      \newcommand{\cN}{{\mathcal N}}
      
      \newcommand{\cQ}{{\mathcal Q}}
      \newcommand{\cP}{{\mathcal P}}
      \newcommand{\cR}{{\mathcal R}}

      \newcommand{\cU}{{\mathcal U}}
      \newcommand{\cV}{{\mathcal V}}

      \newcommand{\rank}{\hbox{\rm{rank}}\,}

      \newdimen\expt
      \expt=.1ex
      \def\boxit#1{\setbox0\hbox{$\displaystyle{#1}$}
            \hbox{\lower.4\expt
       \hbox{\lower3\expt\hbox{\lower\dp0
            \hbox{\vbox{\hrule height.4\expt
       \hbox{\vrule width.4\expt\hskip3\expt
            \vbox{\vskip3\expt\box0\vskip2\expt}%
       \hskip3\expt\vrule width.4\expt}\hrule height.4\expt}}}}}}
      \begin{document}
       \pagestyle{myheadings}
      \markboth{ Gelu Popescu}{  Pluriharmonic functions on noncommutative polyballs  }

      \title [   And\^ o   dilations and inequalities on  noncommutative  domains ]
      {      And\^ o   dilations and inequalities on  noncommutative  domains }
        \author{Gelu Popescu}
\date{November 14, 2016}
      \thanks{Research supported in part by  NSF grant DMS 1500922}
      \subjclass[2010]{Primary:   47A13; 47A20;   Secondary: 47A63; 47A45; 46L07.}
      \keywords{   And\^ o's dilation; And\^ o's inequality; Commutant lifting; Fock space;  Noncommutative bi-domain; Noncommutative variety;  Poisson transform; Schur representation.
}

      \address{Department of Mathematics, The University of Texas
      at San Antonio \\ San Antonio, TX 78249, USA}
      \email{\tt gelu.popescu@utsa.edu}

\begin{abstract}
We obtain  intertwining dilation theorems for noncommutative regular domains $\cD_f$  and noncommutative varieties $\cV_J$ in $B(\cH)^n$, which generalize Sarason \cite{S} and Sz.-Nagy--Foia\c s \cite{SzF}  commutant lifting theorem for commuting contractions. We present several applications including  a new proof for the commutant lifting theorem for pure elements in the domain $\cD_f$ (resp. variety $\cV_J$) as well as a Schur    type representation   for the  unit ball of the Hardy algebra associated with the variety $\cV_J$.

We provide And\^ o type dilations and inequalities for bi-domains $\cD_f\times_c \cD_f$ which consist of all pairs $({\bf X}, {\bf Y})$ of tuples  ${\bf X}:=(X_{1},\ldots, X_{n_1})\in \cD_f$  and
      ${\bf Y}:=(Y_{1},\ldots, Y_{n_2})\in \cD_g$ which commute, i.e. each entry of ${\bf X}$ commutes with each entry of ${\bf Y}$. The results are new even when $n_1=n_2=1$. In this particular case, we obtain
       extensions of And\^ o's results \cite{An} and Agler-McCarthy's inequality \cite{AM}  for commuting contractions to  larger classes of commuting operators.

       All the results are extended to bi-varieties $\cV_{J_1}\times_c \cV_{J_2}$, where  $\cV_{J_1}$ and $\cV_{J_2}$ are noncommutative varieties   generated by  WOT-closed  two-sided ideals   in  noncommutative   Hardy algebras. The commutative case as well as the matrix case when $n_1=n_2=1$  are   also discussed.
 \end{abstract}

      \maketitle

\section*{Introduction}

  Extending  von Neumann \cite{vN} inequality for one contraction and Sz.-Nagy proof using dilation theory \cite{SzFBK-book}, And\^ o \cite{An} proved a dilation result that implies his celebrated inequality which says that if $T_1$ and $T_2$ are commuting contractions on a Hilbert space, then for any polynomial $p$ in two variables,
 $$
\|p(T_1, T_2)\|\leq \|p\|_{\DD^2},
$$
where $\DD^2$ is the bidisk in $\CC^2$. For a nice survey and further generalizations of these inequalities  we refer to Pisier's book \cite{Pi-book} (see also \cite{vN}, \cite{SzFBK-book}, \cite{NV}, \cite{Varo}, \cite{Po-von}, \cite{Pa-book}, \cite{CW1},   \cite{CW2}, \cite{AM}, and \cite{DS}).
Inspired by the work of Agler-McCarthy \cite{AM} and Das-Sarkar \cite{DS} on distinguished varieties and Ando's inequality for two commuting contractions,
  we found, in a very recent  paper \cite{Po-Ando},  analogues of And\^ o's results for the elements of the bi-ball ${\bf P}_{n_1,n_2}$ which consists of all pairs $({\bf X}, {\bf Y})$ of row contractions ${\bf X}:=(X_{1},\ldots, X_{n_1})$  and
      ${\bf Y}:=(Y_{1},\ldots, Y_{n_2})$ which commute, i.e. each entry of ${\bf X}$ commutes with each entry of ${\bf Y}$.
      The results were obtained in a more general setting, namely, when ${\bf X}$ and ${\bf Y}$ belong to  noncommutative varieties $\cV_1$ and $\cV_2$ determined by row contractions subject to constraints such as
$$q(X_1,\ldots, X_{n_1})=0 \quad \text{and} \quad r(Y_1,\ldots, Y_{n_2})=0, \qquad  q\in \cP, r\in \cR,
$$
 respectively, where $\cP$ and $\cR$ are sets of noncommutative  polynomials.
  This led to one of the main results of the paper, an And\^ o type inequality on noncommutative varieties, which,  in the particular case when  $n_1=n_2=1$ and $T_1$ and $T_2$ are commuting contractive matrices  with spectrum in the open unit disk $\DD:=\{z\in \CC:\ |z|<1\}$, takes the form
$$
\|p(T_1, T_2)\|\leq \min\left\{ \|p(B_1\otimes I_{\CC^{d_1}},\varphi_1(B_1))\|, \|p(\varphi_2(B_2), B_2\otimes I_{\CC^{d_2}})\|\right\},
$$
where $(B_1\otimes I_{\CC^{d_1}},\varphi_1(B_1))$ and $(\varphi_2(B_2), B_2\otimes I_{\CC^{d_2}})$ are analytic dilations of $(T_1, T_2)$ while $B_1$ and $B_2$ are the universal models  associated with $T_1$ and $T_2$, respectively. In this  setting, the inequality  is sharper than And\^ o's inequality and Agler-McCarthy's inequality \cite{AM}. We obtained  more general inequalities  for arbitrary commuting contractive matrices  and improve And\^ o's inequality for   commuting contractions  when at least one of them is of class $\cC_0$. In this setting, it would be interesting to find  good analogues for {\it distinguished varieties} in the sense of  \cite{AM}.

 Let $\FF_n^+$ be the unital free semigroup on $n$ generators
$g_1,\ldots, g_n$ and the identity $g_0$.  The length of $\alpha\in
\FF_n^+$ is defined by $|\alpha|:=0$ if $\alpha=g_0$ and
$|\alpha|:=k$ if
 $\alpha=g_{i_1}\cdots g_{i_k}$, where $i_1,\ldots, i_k\in \{1,\ldots, n\}$.
 If ${\bf Z}:=\left< Z_1,\ldots, Z_n\right>$  is an $n$-tuple of  noncommutative indeterminates, we use the notation $Z_\alpha:= Z_{i_1}\cdots Z_{i_k}$  and $Z_{g_0}:=1$.  We denote by $\CC\left<{\bf Z}\right>$ the complex algebra of all polynomials in $Z_1,\ldots, Z_n$.
A polynomial  $f:=\sum_{\alpha\in\FF_n^+} a_\alpha Z_\alpha$ in $\CC\left<{\bf Z}\right>$ is called  {\it positive regular} if  the coefficients satisfy the conditions: $a_\alpha\geq 0$ for any $\alpha\in \FF_n^+$, \ $a_{g_0}=0$, and
  $a_{g_i}>0$ if  $i=1,\ldots, n$.  Define  the {\it noncommutative regular  domain}
$$
\cD_f(\cH):=\left\{ {\bf X}:=(X_1,\ldots, X_n)\in B(\cH)^n: \
 \sum_{|\alpha|\geq 1} a_\alpha X_\alpha
X_\alpha^* \leq I_\cH\right\}
$$
and the {\it noncommutative
  ellipsoid} $\cE_f(\cH)\supseteq \cD_f(\cH)$ by setting
$$
\cE_f(\cH):=\left\{ {\bf X}:=(X_1,\ldots, X_{n})\in B(\cH)^n: \  \sum_{|\beta|=1} a_\beta X_\beta X_\beta^*\leq I_\cH\right\},
$$
where $B(\cH)$ stands for the algebra of all bounded linear operators on a Hilbert space $\cH$.
Given $n_1, n_2 \in\NN:=\{1,2,\ldots\}$ and   $\Omega_j\subseteq B(\cH)^{n_j}$, $j=1,2$, we denote by $\Omega_1\times_c\Omega_2$
   the set of all pairs  $ ({\bf X},{ \bf Y})\in \Omega_1\times  \Omega_2$
     with the property that the entries of ${\bf X}:=(X_{1},\ldots, X_{n_1})$  are commuting with the entries of
      ${\bf Y}:=(Y_{1},\ldots, Y_{n_2})$.

 The main goal of the present paper is to extend the results from \cite{Po-Ando} for bi-balls and obtain And\^ o type dilations and inequalities for    bi-domains and noncommutative varieties:
 $$
 \cD_f(\cH)\times_c \cD_g(\cH)\quad \text{ and } \quad \cV_{J_1}(\cH)\times_c\cV_{J_2}(\cH),
 $$
 where $f\in \CC\left<{\bf Z}\right>$ and $g\in \CC\left<{\bf Z}'\right>$ are positive regular  noncommutative polynomials while $\cV_{J_1}$ and $\cV_{J_2}$ are varieties generated by  WOT-closed  two-sided ideals  in certain noncommutative   Hardy algebras.

In Section 2, we obtain an intertwining dilation theorem for bi-domains which    generalizes  Sarason  \cite{S} and  Sz.-Nagy--Foia\c s \cite{SzF} commutant lifting theorem  for commuting contractions  in  the framework of noncommutative regular  domains and Poisson kernels on weighted Fock spaces (see \cite{Po-domains}). As a consequence, we obtain a new proof for the commutant lifting theorem for pure elements in $\cD_f$.

These results are extended, in Section 3, to noncommutative varieties $\cV_J\subseteq \cD_f$ which are generated by WOT-closed  two-sided ideals $J$   in the   Hardy algebra $F_n^\infty(\cD_f)$, a noncommutative multivariable  version of the classical Hardy algebra $H^\infty (\DD)$.
More precisely,   the noncommutative variety $\cV_{J}(\cH)$ is defined as the set of all   {\it pure}  $n$-tuples
${\bf X}:=(X_1,\ldots, X_n)\in \cD_f(\cH)$    with the property that
  $$
\varphi(X_1,\ldots, X_n)=0\quad \text{for any } \ \varphi\in J,
$$
where $\varphi(X_1,\ldots, X_n)$ is defined   using  the $F_n^\infty(\cD_f)$-functional calculus for pure elements in $\cD_f(\cH)$ (see \cite{Po-domains}). Each variety $\cV_J$  is associated with certain  universal models ${\bf B}=(B_1,\ldots, B_n)$  and ${\bf C}=(C_1,\ldots, C_n)$ of constrained   creation operators  acting on a  subspace $\cN_J$ of the full Fock space with $n$ generators $F^2(H_n)$.
The noncommutative Hardy algebras $F_n^\infty(\cV_{J})$ and  $R_n^\infty(\cV_{J})$  are   the $WOT$-closed algebras generated by
$I, B_1,\ldots, B_n$ and  $I, C_1,\ldots, C_n$, respectively.
Using our intertwining dilation theorem on noncommutative varieties, we obtain  a  Schur \cite{Sc} type representation   for the  unit ball of $\cR_{n}^\infty(\cV_J)\bar \otimes B(\cH', \cH)$.

In Section 4, we obtain And\^ o type dilations and inequalities  for   noncommutative varieties
$$\cV_{J_1}(\cH)\times_c\cV_{J_2}(\cH),
$$ where
 $\cV_{J_1}(\cH)\subseteq \cD_f(\cH)$ and $\cV_{J_2}(\cH)\subseteq \cD_g(\cH)$.
 We   prove that any pair $({\bf T}_1, {\bf T}_2)$ in
 $\cV_{J_1}(\cH)\times_c\cV_{J_2}(\cH)$
has {\it analytic dilations}
 $$({\bf B}_1\otimes  I_{\ell^2},  \varphi_1({\bf  C}_1))\quad \text{and} \quad
 (\varphi_2({\bf C}_2), {\bf B}_2\otimes I_{\ell^2})
 $$
 where
$\varphi_1({\bf  C}_1)$ and $\varphi_2({\bf  C}_2)$ are some     multi-analytic operators with  respect to the universal models ${\bf B}_1$ and ${\bf B}_2$ of the varieties $\cV_{J_1}$ and $\cV_{J_2}$, respectively.
As a consequence, we show that the inequality
$$
\|[p_{rs}({\bf T}_1,{\bf T}_2)]_{k}\|\leq \min \left\{ \|[p_{rs}({\bf B}_1\otimes I_{\ell^2},  \varphi_1({\bf  C}_1))]_{k}\|,  \|[p_{rs}({\varphi_2({\bf  C}_2), \bf B}_2\otimes I_{\ell^2})]_{k}\|\right\}
$$
holds
 for any  $[p_{rs}]_k\in M_k(\CC\left<{\bf Z}, {\bf Z}'\right>)$ and   $k\in \NN$.
 Here, $\CC\left<{\bf Z}, {\bf Z}'\right>$   denotes the complex algebra of all  polynomials in  noncommutative indeterminates
    ${\bf Z}:=\left< Z_1,\ldots, Z_{n_1}\right>$ and
${\bf Z}':=\left< Z_1',\ldots, Z_{n_2}'\right>$, where we    assume that
 $Z_iZ_j'=Z_j'Z_i$ for any $i\in \{1,\ldots, n_1\}$ and $j\in \{1,\ldots, n_2\}$.

 On the other hand, we  prove that the abstract bi-domain
$$
\cD_f\times_c \cE_g:=\{\cD_f(\cH)\times_c \cE_g(\cH): \ \cH \text{ is a Hilbert space}\}
$$
has a universal {\it analytic model} \  $({\bf W}_1\otimes I_{\ell^2},  \psi({\bf  \Lambda}_1))$,
 where the tuples ${\bf W}_1=(W_{1,1},\ldots, W_{1,n_1})$ and ${\bf \Lambda}_1=(\Lambda_{1,1},\ldots, \Lambda_{1,n_1})$ are the weighted left and right creation operators on the full Fock space $F^2(H_{n_1})$, respectively, associated with the regular domain $\cD_f$.
More precisely, we show that the inequality
$$
\|[p_{rs}({\bf T}_1,{\bf T}_2)]_{k}\|\leq  \|[p_{rs}({\bf W}_1\otimes I_{\ell^2},  \psi({\bf  \Lambda}_1))]_{k}\|, \qquad  [p_{rs}]_k\in M_k(\CC\left<{\bf Z}, {\bf Z}'\right>),
$$
holds  for any  $({\bf T}_1, {\bf T}_2)\in \cD_f(\cH)\times_c \cE_g(\cH)$ and any $k\in \NN$. A similar result holds for the abstract variety
$\cV_J\times_c \cE_g$.

We will see, in Section 4,  that all the results of the present paper concerning And\^ o type dilations and inequalities can be written in the commutative multivariable  setting in terms of analytic multipliers of certain Hilbert spaces of holomorphic functions. These results are new even when $n_1=n_2=1$. In  this particular case, we obtain  extensions of And\^ o's results for commuting contractions \cite{An}, Agler-McCarthy's inequality \cite{AM},  and Das-Sarkar extension \cite{DS}, to  larger classes of commuting operators.

Finally, we would like to thank the referee for helpful comments and suggestions on the paper.

\smallskip

\section{Preliminaries on noncommutative regular domains and universal models}

In this section, we recall from \cite{Po-domains} basic facts concerning the noncommutative regular domains $\cD_f(\cH)\subset B(\cH)^n$ generated by positive regular formal power series, their universal models,  and the Hardy algebras they generate. We mention that commutative domains generated by positive regular polynomials were first introduced in \cite{Pot} and further elaborated in \cite{BS} and in a series of papers by the author (see \cite{Po-domains} and the references there in).

Let $H_n$ be an $n$-dimensional complex  Hilbert space with orthonormal
      basis
      $e_1$, $e_2$, $\dots,e_n$, where $n\in\{1,2,\dots\}$.        We consider
      the full Fock space  of $H_n$ defined by
      $$F^2(H_n):=\bigoplus_{k\geq 0} H_n^{\otimes k},$$
      where $H_n^{\otimes 0}:=\CC 1$ and $H_n^{\otimes k}$ is the (Hilbert)
      tensor product of $k$ copies of $H_n$.
      Define the left creation
      operators $S_i:F^2(H_n)\to F^2(H_n), \  i=1,\dots, n$,  by
      $$
       S_i\varphi:=e_i\otimes\varphi, \quad  \varphi\in F^2(H_n),
      $$
      and  the right creation operators
      $R_i:F^2(H_n)\to F^2(H_n)$  by
      $
       R_i\varphi:=\varphi\otimes e_i$, \ $ \varphi\in F^2(H_n)$.
The noncommutative analytic Toeplitz   algebra   $F_n^\infty$
  and  its norm closed version,
  the noncommutative disc
 algebra  $\cA_n$,  were introduced by the author  (see \cite{Po-von}, \cite{Po-funct}, \cite{Po-analytic}) in connection
   with a multivariable noncommutative von Neumann inequality.
$F_n^\infty$  is the algebra of left multipliers of $F^2(H_n)$  and
can be identified with
 the
  weakly closed  (or $w^*$-closed) algebra generated by the left creation operators
   $S_1,\dots, S_n$  acting on   $F^2(H_n)$,
    and the identity.
     The noncommutative disc algebra $\cA_n$ is
    the  norm closed algebra generated by
   $S_1,\dots, S_n$,
    and the identity.

 A formal power series $f:=\sum_{\alpha\in\FF_n^+} a_\alpha Z_\alpha$ in noncommutative  indeterminates $Z_1,\ldots, Z_n$ is called  {\it positive regular} if  the coefficients satisfy the conditions: $a_\alpha\geq 0$ for any $\alpha\in \FF_n^+$, \ $a_{g_0}=0$,
  $a_{g_i}>0$ if  $i=1,\ldots, n$, and
$
\limsup_{k\to\infty} \left( \sum_{|\alpha|=k} |a_\alpha|^2\right)^{1/2k}<\infty.
$
If ${\bf X}:=(X_1,\ldots, X_n)\in B(\cH)^n$,    we set
$X_\alpha:= X_{i_1}\cdots X_{i_k}$ if $\alpha=g_{i_1}\cdots g_{i_k}$, where $i_1,\ldots, i_k\in \{1,\ldots, n\}$, and $X_{g_0}:=I_\cH$.
  Define  the noncommutative regular  domain
$$
\cD_f(\cH):=\left\{ {\bf X}:=(X_1,\ldots, X_n)\in B(\cH)^n: \
 \sum_{|\alpha|\geq 1} a_\alpha X_\alpha
X_\alpha^* \leq I_\cH\right\},
$$
where the convergence of the series is in the weak operator
topology.
The power series  $1-f $ is invertible with
its inverse
$g= \sum_{\alpha\in \FF_n^+} b_\alpha
  X_\alpha $,  $b_\alpha\in \CC$, satisfies the relation
\begin{equation*}
\begin{split}
g &= 1+ f + f^2+\cdots
=1+\sum_{m=1}^\infty \sum_{|\alpha|=m}\left(\sum_{j=1}^{|\alpha|}
\sum_{{\gamma_1\cdots \gamma_j=\alpha }\atop {|\gamma_1|\geq
1,\ldots,
 |\gamma_j|\geq 1}} a_{\gamma_1}\cdots a_{\gamma_j} \right)
  X_\alpha.
\end{split}
\end{equation*}
 Consequently, we have
\begin{equation}\label{b_alpha}
b_{g_0}=1 \quad \text{ and }\quad b_\alpha= \sum_{j=1}^{|\alpha|}
\sum_{{\gamma_1\cdots \gamma_j=\alpha }\atop {|\gamma_1|\geq
1,\ldots, |\gamma_j|\geq 1}} a_{\gamma_1}\cdots a_{\gamma_j}   \quad
\text{ if } \ |\alpha|\geq 1.
\end{equation}

The {\it weighted left creation  operators} $W_i:F^2(H_n)\to
F^2(H_n)$, $i=1,\ldots, n$,  associated with the
 noncommutative domain $\cD_f$  are defined by setting $W_i:=S_iD_i$, where
 $S_1,\ldots, S_n$ are the left creation operators on the full
 Fock space $F^2(H_n)$ and each diagonal operator $D_i:F^2(H_n)\to F^2(H_n)$,
is given by
$$
D_ie_\alpha:=\sqrt{\frac{b_\alpha}{b_{g_i \alpha}}} e_\alpha,\qquad
 \alpha\in \FF_n^+.
$$
 Note that
\begin{equation*}
W_\beta e_\gamma= \frac {\sqrt{b_\gamma}}{\sqrt{b_{\beta \gamma}}}
e_{\beta \gamma} \quad \text{ and }\quad W_\beta^* e_\alpha
=\begin{cases} \frac {\sqrt{b_\gamma}}{\sqrt{b_{\alpha}}}e_\gamma&
\text{ if }
\alpha=\beta\gamma \\
0& \text{ otherwise }
\end{cases}
\end{equation*}
 for any $\alpha, \beta \in \FF_n^+$.
 We prove in \cite{Po-domains} that
  ${\bf W}:=(W_1,\ldots, W_n)$ is a pure $n$-tuple in  $ \cD_f(F^2(H_n))$,
  i.e. $\Phi_{f,{\bf W}}(I)\leq I$ and
   $\Phi_{f,{\bf W}}^k(I)\to 0$ in the strong operator topology, as
   $k\to\infty$,
   where $\Phi_{f,{\bf W}}(Y):=\sum\limits_{|\alpha|\geq 1} a_\alpha W_\alpha
YW_\alpha^*$ for  $Y\in B(F^2(H_n))$  and the convergence is in the
weak operator topology. The $n$-tuple ${\bf W}$ plays the role of universal model for the noncommutative domain $\cD_f$.

We  also define the {\it weighted right creation operators}
$\Lambda_i:F^2(H_n)\to F^2(H_n)$ by setting
$\Lambda_i:= R_i G_i$, $i=1,\ldots, n$,  where $R_1,\ldots, R_n$ are
 the right creation operators on the full Fock space $F^2(H_n)$ and
 each  diagonal operator $G_i$  is defined by
$$
G_ie_\alpha:=\sqrt{\frac{b_\alpha}{b_{ \alpha g_i}}} e_\alpha,\qquad
 \alpha\in \FF_n^+,
$$
where the coefficients $b_\alpha$, $\alpha\in \FF_n^+$, are given
by relation \eqref{b_alpha}.
In this case, we have
\begin{equation*}
\Lambda_\beta e_\gamma=
\frac {\sqrt{b_\gamma}}{\sqrt{b_{ \gamma \tilde\beta}}}
e_{ \gamma \tilde \beta} \quad
\text{ and }\quad
\Lambda_\beta^* e_\alpha =\begin{cases}
\frac {\sqrt{b_\gamma}}{\sqrt{b_{\alpha}}}e_\gamma& \text{ if }
\alpha=\gamma \tilde \beta \\
0& \text{ otherwise }
\end{cases}
\end{equation*}
 for any $\alpha, \beta \in \FF_n^+$, where $\tilde \beta$ denotes
 the reverse of $\beta=g_{i_1}\cdots g_{i_k}$, i.e.
 $\tilde \beta:=g_{i_k}\cdots g_{i_1}$.
 We remark that if  $f :=\sum_{|\alpha|\geq 1} a_\alpha
Z_\alpha$ is a positive regular  power series,   then so is $\tilde{f} :=\sum_{|\alpha|\geq 1} a_{\tilde \alpha} Z_\alpha$.  Moreover,  ${\bf \Lambda}:=(\Lambda_1,\ldots, \Lambda_n)\in \cD_{\tilde f}(F^2(H_n))$ and $W_i=U^*\Lambda_i U$, where $U\in B(F^2(H_n))$ is the unitary operator defined by $U e_\alpha:=e_{\tilde \alpha}$, $\alpha\in \FF_n^+$. Throughout this paper, we will refer to the $n$-tuples ${\bf W}:=(W_1,\ldots, W_n)$  and
${\bf \Lambda}:=(\Lambda_1,\ldots, \Lambda_n)$ as the weighted creation operators associated with the regular  domain $\cD_f$.

In \cite{Po-domains}, we introduced the  domain  algebra $\cA_n(\cD_f)$ associated with the
noncommutative domain $\cD_f$ to be the norm closure of all
polynomials in the weighted left creation operators  $W_1,\ldots,
W_n$ and the identity.
Using the weighted right creation operators
 associated with $\cD_f$, one can   define    the corresponding
  domain algebra ${\cR}_n(\cD_f)$.
  The Hardy algebra $F_n^\infty(\cD_f)$ (resp. $R_n^\infty(\cD_f)$) is the $w^*$- (or WOT-,
SOT-) closure of all polynomials   in $W_1,\ldots, W_n$ (resp. $\Lambda_1,\ldots, \Lambda_n$) and the
identity.
  We proved  that  $F_n^\infty(\cD_f)'= R_n^\infty(\cD_f)$ and $R_n^\infty(\cD_f)'=F_n^\infty(\cD_f)$, where $'$ stands for the commutant.

Now, we recall (\cite{Po-poisson}, \cite{Po-domains}) some basic facts concerning the noncommutative Poisson kernels associated with the regular domains.
Let ${\bf T}:=(T_1,\ldots, T_n)$ be an $n$-tuple of operators in the
noncommutative domain $ \cD_f(\cH)$, i.e.
$\sum\limits_{|\alpha|\geq 1} a_\alpha T_\alpha T_\alpha^*\leq
I_\cH$.
 Define the positive linear mapping
$$\Phi_{f,{\bf T}}:B(\cH)\to
B(\cH) \quad \text{  by  } \quad
\Phi_{f,{\bf T}}(X)=\sum\limits_{|\alpha|\geq 1} a_\alpha T_\alpha
XT_\alpha^*,
$$
where the convergence is in the weak operator topology. We use the notation $\Phi_{f,{\bf T}}^m$ for the composition $\Phi_{f,{\bf T}}\circ \cdots \circ \Phi_{f,{\bf T}}$  of $\Phi_{f,{\bf T}}$ by itself $m$ times.
Since $\Phi_{f,{\bf T}}(I)\leq I$ and $\Phi_{f,{\bf T}}(\cdot)$ is a positive linear map,
 it is easy to see that $\{\Phi_{f,{\bf T}}^m(I)\}_{m=1}^\infty$ is a decreasing
  sequence of positive operators and, consequently,
$Q_{f,{\bf T}}:=\text{\rm SOT-}\lim\limits_{m\to\infty} \Phi_{f,{\bf T}}^m(I)$ exists.
We say that ${\bf T}$  is a  {\it pure}  $n$-tuple in $\cD_f(\cH)$  if
~$\text{\rm SOT-}\lim \limits_{m\to\infty} \Phi_{f,{\bf T}}^m(I)=0$.
Note that, for any ${\bf T}:=(T_1,\ldots, T_n)\in \cD_f(\cH)$ and $0\leq
r<1$, the $n$-tuple $r{\bf T}:=(rT_1,\ldots, rT_n)\in \cD_f(\cH)$ is  pure. Indeed, it is enough to see  that
$\Phi_{f,r{\bf T}}^m(I)\leq r^m \Phi_{f,{\bf T}}^m(I)\leq r^m I$ for any  $m\in
\NN$. Note also that if $\|\Phi_{f,{\bf T}}(I)\|<1$, then $T$ is pure. This is due to the fact that
$\|\Phi_{f,{\bf T}}^m(I)\|\leq \|\Phi_{f,{\bf T}}(I)\|^m$.
We define the noncommutative
Poisson kernel associated with the $n$-tuple ${\bf T} \in \cD_f(\cH)$ to be  the operator $K_{f,{\bf T}}:\cH\to
F^2(H_n)\otimes \cD_{\bf T}$ defined by
\begin{equation*}
 K_{f,{\bf T}}h=\sum_{\alpha\in \FF_n^+} \sqrt{b_\alpha}
e_\alpha\otimes \Delta_{f,{\bf T}} T_\alpha^* h,\qquad h\in \cH,
\end{equation*}
where
$\Delta_{f,{\bf T}}:=\left(I- \Phi_{f,{\bf T}}(I) \right)^{1/2}$ is the defect operator associated with ${\bf T}$ and  $\cD_{\bf T}:=\overline{\Delta_{f,{\bf T}}(\cH)}$ is the corresponding  defect space.
The operator
  $K_{f,{\bf T}}$ is a contraction  satisfying relation
$
  K_{f,{\bf T}}^* K_{f,{\bf T}}=I_\cH-Q_{f,{\bf T}}
$
and
\begin{equation}
\label{ker-inter}
K_{f,{\bf T}} T_i^*=(W_i^*\otimes I_{\cD_{\bf T}})K_{f,{\bf T}},\quad i=1,\ldots, n,
\end{equation}
where ${\bf W}:=(W_1,\ldots, W_n)$ is the universal  model
  associated with the noncommutative  regular domain $\cD_f$.
Moreover, $K_{f,{\bf T}}$ is an isometry if and only if ${\bf T} $  is  pure element of $ \cD_f(\cH)$.

\smallskip

\section{Intertwining dilation theorem  on  noncommutative bi-domains}

 In this section, we obtain an intertwining dilation theorem which    generalizes  Sarason and  Sz.-Nagy--Foia\c s commutant lifting theorem  for commuting contractions  in  the framework of noncommutative regular  domains and Poisson kernels on weighted Fock spaces. As a consequence, we obtain a new proof for the commutant lifting theorem for pure elements in $\cD_f$.
 More   applications of this result  will be considered in the next sections.

Unless otherwise specified, we assume, throughout this paper, that $f$ and $g$ are two  positive regular polynomials in  noncommutative indeterminates ${\bf Z}:=\left< Z_1,\ldots, Z_{n_1}\right>$ and  ${\bf Z}':=\left< Z_1',\ldots, Z_{n_2}'\right>$, respectively, of the form
 $$f:=\sum_{{\alpha\in \FF_{n_1}^+}, {  1\leq |\alpha|\leq k_1}}a_\alpha Z_\alpha\quad \text{ and } \quad
g:=\sum_{{\beta\in \FF_{n_2}^+},{ 1\leq |\beta|\leq k_2}}c_\beta Z_\beta'.
$$
Fix  two tuples of operators
  ${\bf T}_1=(T_{1,1},\ldots, T_{1,n_1})\in \cD_{f}(\cH)$ and   ${\bf T}'_1=(T_{1,1}',\ldots, T_{1,n_1}')\in\cD_{f}(\cH')$  and
  let ${\bf T}_2:=(T_{2,1},\ldots, T_{2,n_2})$, with $T_{2,j}:\cH'\to \cH$,  be such that ${\bf T}_2\in \cD_{g}(\cH', \cH)$ and   intertwines ${\bf T}_1$ with ${\bf T}_1'$, i.e.
   $$
   T_{2,j}T_{1,i}'=T_{1,i}T_{2,j}
   $$
   for any $i\in \{1,\ldots, n_1\}$ and $j\in \{1,\ldots, n_2\}$. We denote by $\cI({\bf T}_1,{\bf T}_1')$ the set of all   intertwining tuples  ${\bf T}_2$  of ${\bf T}_1$ and ${\bf T}_1'$.
   A straightforward calculation reveals that
$$
 \Delta_{f,{\bf T}_1}^2+ \Phi_{f, {\bf T}_1}(\Delta_{g,{\bf T}_2}^2)=\Phi_{g,{\bf T}_2}(\Delta_{f,{\bf T}_1'}^2)+\Delta_{g,{\bf T}_2}^2.
$$
 If
 the defect spaces $\cD_{{\bf T}_1}$,  $\cD_{{\bf T}_1'}$,  and $\cD_{{\bf T}_2}$ are finite dimensional with  dimensions $d_1:=\dim \cD_{{\bf T}_1}$, $d_1':=\dim \cD_{{\bf T}_1'}$, and  $d_2:=\dim \cD_{{\bf T}_2}$, and such that
 $
  d_1+m_1d_2=m_2 d_1'+d_2,
  $
  where
  $$
  m_i:=\text{\rm card} \{\alpha\in \FF_{n_j}^+: \ 1\leq|\alpha|\leq k_j\},\qquad j=1,2,
  $$
  then    there are unitary extensions $U:\cD_{{\bf T}_1}\oplus \bigoplus_{\alpha\in \FF_{n_1}^+, 1\leq |\alpha|\leq k_1}\cD_{{\bf T}_2}\to  \bigoplus_{\beta\in \FF_{n_2}^+, 1\leq |\beta|\leq k_2}\cD_{{\bf T}_1'}\oplus \cD_{{\bf T}_2}
$ of the isometry
\begin{equation}
\label{iso1}
U\left(\Delta_{{\bf T}_1}h\oplus \bigoplus_{{\alpha\in \FF_{n_1}^+, 1\leq |\alpha|\leq k_1}} \sqrt{a_\alpha}\Delta_{{\bf T}_2}T_{1,\alpha}^*h \right):=
\bigoplus_{\beta\in \FF_{n_2}^+, 1\leq |\beta|\leq k_2}\sqrt{c_\beta}\Delta_{{\bf T}_1'}T_{2,\beta}^*h ,\qquad h\in \cH.
\end{equation}
We denote by $\cU_{\bf T}$ the set of all  unitary extensions of the isometry given by relation \eqref{iso1}.
In case the above-mentioned dimensional conditions are not satisfied, then   let $\cK$ be  an infinite dimensional Hilbert space and note that
the operator defined by
\begin{equation}
\label{iso2}
U\left(\Delta_{{\bf T}_1}h\oplus \bigoplus_{{\alpha\in \FF_{n_1}^+, 1\leq |\alpha|\leq k_1}} [\sqrt{a_\alpha}\Delta_{{\bf T}_2}T_{1,\alpha}^*h\oplus 0] \right):=
\left(\bigoplus_{\beta\in \FF_{n_2}^+, 1\leq |\beta|\leq k_2}\sqrt{c_\beta}\Delta_{{\bf T}_1'}T_{2,\beta}^*h\right) \oplus 0
\end{equation}
is an isometry which can be extended to a unitary operator
$$U:\cD_{{\bf T}_1}\oplus \bigoplus_{\alpha\in \FF_{n_1}^+, 1\leq |\alpha|\leq k_1}(\cD_{{\bf T}_2}\oplus \cK)\to  \bigoplus_{\beta\in \FF_{n_2}^+, 1\leq |\beta|\leq k_2}\cD_{{\bf T}_1'}\oplus (\cD_{{\bf T}_2}\oplus \cK).
$$
 In this setting, we denote by $\cU_{\bf T}^\cK$ the set of all unitary extensions of the isometry defined by \eqref{iso2}.
Let
$ U=\left[ \begin{matrix} A&B\\C&D \end{matrix}\right]
$
be the operator matrix representation of $U\in \cU_{\bf T}^\cK$, where
\begin{equation}\label{ABCD}\begin{split}
 A&:\cD_{{\bf T}_1}\to \bigoplus_{{\beta\in \FF_{n_2}^+}, {1\leq |\beta|\leq k_2}}\cD_{{\bf T}_1'},\\
 B&:\bigoplus_{\alpha\in \FF_{n_1}^+, 1\leq |\alpha|\leq k_1}(\cD_{{\bf T}_2}\oplus \cK)\to \bigoplus_{\beta\in \FF_{n_2}^+, 1\leq |\beta|\leq k_2}\cD_{{\bf T}_1'},\\
 C&: \cD_{{\bf T}_1}\to \cD_{{\bf T}_2}\oplus \cK, \text{\rm and }\\
 D&: \bigoplus_{\alpha\in \FF_{n_1}^+, 1\leq |\alpha|\leq k_1}(\cD_{{\bf T}_2}\oplus \cK)\to \cD_{{\bf T}_2}\oplus \cK.
 \end{split}
 \end{equation}
 Given an operator $Z:\cN\to \cM$ and $n\in \NN$, we introduce the ampliation
$$
\text{\rm diag}_{n}(Z):=\left(\begin{matrix}
Z&\cdots &0\\
\vdots&\ddots& \vdots\\
0&\cdots&Z
\end{matrix} \right):\bigoplus_{s=1}^n \cN\to \bigoplus_{s=1}^n \cM.
$$
In what follows, we consider the lexicographic order  for  the free semigroup $\FF_{n_1}^+$, that is
$$
g_0<g_1<\cdots <g_{n_1}<
g_1g_1 <\cdots < g_1g_{n_1}< \cdots< g_{n_1}g_1<\cdots < g_{n_1}g_{n_1}< \cdots
$$
and so on. For the direct product $\FF_{n_1}^+\times \cdots \times \FF_{n_1}^+$ of $p$ copies of $\FF_{n_1}^+$, we say that $(\alpha_p,\ldots, \alpha_1)<(\beta_p,\ldots, \beta_1)$ if $\alpha_p<\beta_p$ or there is $i\in \{2,\ldots, p\}$ such that
$\alpha_p=\beta_p,\ldots, \alpha_i=\beta_i$, and $\alpha_{i-1}<\beta_{i-1}$.
We also use  the operator column notation
$\left[\begin{matrix}Y_{(\alpha_p,\ldots, \alpha_1)}\\
 \vdots\\
   \alpha_i\in \FF_{n_1}^+, 1\leq |\alpha_i|\leq k\end{matrix}\right]$, where
  the entries $Y_{(\alpha_p,\ldots, \alpha_1)}$ are  arranged in  the order mentioned above.
For simplicity, $[X_1,\ldots, X_n]$ denotes either the $n$-tuple $(X_1,\ldots, X_n)\in B(\cH)^n$ or the operator row matrix
$[X_1\cdots X_n]$ acting from $\cH^{(n)}$, the direct sum of $n$ copies of the Hilbert space $\cH$, to $\cH$.
\begin{lemma} \label{le1} If ${\bf X}_1:=[\sqrt{a_\alpha} T_{1,\alpha}:\ \alpha\in \FF_{n_1}^+, 1\leq |\alpha|\leq k_1 ]$  and $m_1:=\text{ \rm card} \{\alpha\in \FF_{n_1}^+: \ 1\leq |\alpha|\leq k_1\}$, then
\begin{equation*}
\begin{split}
&\text{\rm diag}_{m_1}\left(D\text{\rm diag}_{m_1}\left(\cdots
D\text{\rm diag}_{m_1}\left(\widehat\Delta_{{\bf T}_2}\right)
 {\bf X}_1^*\cdots\right){\bf X}_1^*\right){\bf X}_1^*\\
 &\qquad\qquad\qquad =
 \text{\rm diag}_{m_1}\left(D\text{\rm diag}_{m_1}\left(\cdots
D\text{\rm diag}_{m_1}\left(\widehat\Delta_{{\bf T}_2}\right)
 \cdots\right)\right)\left[\begin{matrix}\sqrt{a_{\alpha_1}\cdots a_{\alpha_p}}(T_{1,\alpha_p}\cdots T_{1,\alpha_1})^*\\
 \vdots\\
 \alpha_i\in \FF_{n_1}^+, 1\leq |\alpha_i|\leq k_1 \end{matrix}\right],
\end{split}
\end{equation*}
where $\text{\rm diag}_{m_1}$ appears  $p$ times on each side of the equality, and $\widehat\Delta_{{\bf T}_2}:=\left[\begin{matrix} \Delta_{{\bf T}_2}\\0\end{matrix}\right]:\cH\to \cD_{{\bf T}_2}\oplus \cK$.
\end{lemma}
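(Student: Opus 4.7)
The plan is to prove this identity by induction on the number $p\geq 1$ of occurrences of $\text{\rm diag}_{m_1}$. Denote the left-hand side by $L_p$ and the right-hand side by $R_p$, and write $F_p:=\text{\rm diag}_{m_1}(D\text{\rm diag}_{m_1}(\cdots D\text{\rm diag}_{m_1}(\widehat\Delta_{{\bf T}_2})\cdots))$ for the purely block-diagonal operator appearing on the right, with $\text{\rm diag}_{m_1}$ nested $p$ times. Thus $R_p=F_p C_p$, where $C_p h$ is the column
$$C_p h := \left[\sqrt{a_{\alpha_1}\cdots a_{\alpha_p}}\,(T_{1,\alpha_p}\cdots T_{1,\alpha_1})^* h\right]_{(\alpha_p,\ldots,\alpha_1)}$$
indexed in lex order. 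The two recursions $L_p=\text{\rm diag}_{m_1}(DL_{p-1}){\bf X}_1^*$ and $F_p=\text{\rm diag}_{m_1}(DF_{p-1})$ drive the induction.

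For the base case $p=1$, since ${\bf X}_1^* h=[\sqrt{a_{\alpha_1}}T_{1,\alpha_1}^* h]_{\alpha_1}$ and $\text{\rm diag}_{m_1}(\widehat\Delta_{{\bf T}_2})$ applies $\widehat\Delta_{{\bf T}_2}$ entrywise, one reads off directly that
$$L_1 h=\bigoplus_{\alpha_1}\sqrt{a_{\alpha_1}}\,\widehat\Delta_{{\bf T}_2}T_{1,\alpha_1}^* h=F_1 C_1 h.$$

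For the inductive step, the $\alpha_p$-component of $L_p h=\text{\rm diag}_{m_1}(DL_{p-1}){\bf X}_1^* h$ equals $\sqrt{a_{\alpha_p}}\,DL_{p-1}T_{1,\alpha_p}^* h$. Applying the inductive hypothesis $L_{p-1}=F_{p-1}C_{p-1}$ and pushing the scalar $\sqrt{a_{\alpha_p}}$ inside $DF_{p-1}$ by linearity, this equals
$$DF_{p-1}\left[\sqrt{a_{\alpha_1}\cdots a_{\alpha_p}}\,(T_{1,\alpha_{p-1}}\cdots T_{1,\alpha_1})^*\, T_{1,\alpha_p}^* h\right]_{(\alpha_{p-1},\ldots,\alpha_1)}.$$
The key algebraic step is the adjoint identity
$$(T_{1,\alpha_{p-1}}\cdots T_{1,\alpha_1})^*T_{1,\alpha_p}^*=T_{1,\alpha_1}^*\cdots T_{1,\alpha_{p-1}}^*T_{1,\alpha_p}^*=(T_{1,\alpha_p}T_{1,\alpha_{p-1}}\cdots T_{1,\alpha_1})^*,$$
which recasts each entry in the reversed-product form demanded by $C_p$. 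Stacking these $\alpha_p$-components into an outer column and identifying the nested column $[[\,\cdot\,]_{(\alpha_{p-1},\ldots,\alpha_1)}]_{\alpha_p}$ with the single column indexed by $(\alpha_p,\ldots,\alpha_1)$ in lex order yields exactly $\text{\rm diag}_{m_1}(DF_{p-1})C_p h=F_p C_p h=R_p h$.

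No serious obstacle arises: the argument reduces to bookkeeping with block-matrix compositions. The one point that needs care is to align the lex-order convention for the multi-index $(\alpha_p,\ldots,\alpha_1)$ with the nesting of the $p$ block-diagonal operators, so that each application of $\text{\rm diag}_{m_1}$ corresponds to appending a new outermost (slowest-varying) coordinate, and so that the $T_{1,\alpha_i}^*$ factors combine in the reversed order prescribed by the statement.
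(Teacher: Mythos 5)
Your proof is correct and follows essentially the same route as the paper's: an induction on $p$ carried out by block-matrix bookkeeping, with the key points being that each $\text{\rm diag}_{m_1}$ adds the outermost index $\alpha_p$ and that $(T_{1,\alpha_{p-1}}\cdots T_{1,\alpha_1})^*T_{1,\alpha_p}^*=(T_{1,\alpha_p}\cdots T_{1,\alpha_1})^*$. The only cosmetic difference is that the paper expands $D$ into its row components $D_{(\alpha)}$ and computes both sides separately to a common explicit formula, whereas you run a single induction on the identity $L_p=R_p$ directly; both are equally valid.
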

\begin{proof}
Let $D=[D_{(\alpha)} : \ \alpha\in \FF_{n_1}^+, 1\leq |\alpha|\leq k_1]$ with $D_{(\alpha)}\in B(\cD_{{\bf T}_2}\oplus \cK)$ and note that
$$
D\text{\rm diag}_{m_1}\left(\widehat\Delta_{{\bf T}_2}\right)
 {\bf X}_1^*=\sum_{\alpha_1\in \FF_{n_1}^+, 1\leq |\alpha_1|\leq k_1} D_{(\alpha_1)}\widehat\Delta_{{\bf T}_2} \sqrt{a_{\alpha_1}}T_{1,\alpha_1}^*
 $$
and
$$
\text{\rm diag}_{m_1}\left(D\text{\rm diag}_{m_1}\left(\widehat\Delta_{{\bf T}_2}\right){\bf X}_1^*\right) {\bf X}_1^*
=
\left[\begin{matrix}\sum_{{\alpha_1\in \FF_{n_1}^+}\atop { 1\leq |\alpha_1|\leq k_1}} D_{(\alpha_1)}\widehat\Delta_{{\bf T}_2} \sqrt{a_{\alpha_1}}\sqrt{a_{\alpha_2}}(T_{1,\alpha_2}T_{1,\alpha_1})^*\\
\vdots\\
 \alpha_2\in \FF_{n_1}^+, 1\leq|\alpha_2|\leq k_1
\end{matrix}\right].
$$
An inductive argument shows that
\begin{equation*}
\begin{split}
&\text{\rm diag}_{m_1}\left(D\text{\rm diag}_{m_1}\left(\cdots
D\text{\rm diag}_{m_1}\left(\widehat\Delta_{{\bf T}_2}\right)
 {\bf X}_1^*\cdots\right){\bf X}_1^*\right){\bf X}_1^*\\
 &\qquad
 =
 \left[\begin{matrix}\sum_{{\alpha_1,\ldots, \alpha_{p-1}\in \FF_{n_1}^+}\atop { 1\leq |\alpha_i|\leq k_1}} D_{(\alpha_{p-1})}\cdots D_{(\alpha_1)}\widehat\Delta_{{\bf T}_2} \sqrt{a_{\alpha_1} \cdots a_{\alpha_{p-1}}a_{\alpha_{p}}}(T_{1,\alpha_p \alpha_{p-1}\cdots \alpha_1})^*\\
\vdots\\
 \alpha_p\in \FF_{n_1}^+, 1\leq|\alpha_p|\leq k_1
\end{matrix}\right],
\end{split}
\end{equation*}
where $\text{\rm diag}_{m_1}$ appears  $p$ times.
On the other hand, one can easily prove by induction that
\begin{equation*}
\begin{split}
& \text{\rm diag}_{m_1}\left(D\text{\rm diag}_{m_1}\left(\cdots
D\text{\rm diag}_{m_1}\left(\widehat\Delta_{{\bf T}_2}\right)
 \cdots\right)\right)\left[\begin{matrix}\sqrt{a_{\alpha_1}\cdots a_{\alpha_p}}(T_{1,\alpha_p}\cdots T_{1,\alpha_1})^*\\
 \vdots\\
 \alpha_i\in \FF_{n_1}^+, 1\leq |\alpha_i|\leq p \end{matrix}\right]\\
  &
 =\text{\rm diag}_{m_1}\left([D_{(\alpha_{p-1})}\cdots D_{(\alpha_{1})} \widehat\Delta_{{\bf T}_2}: \ \alpha_1,\ldots, \alpha_{p-1}\in \FF_{n_1}^+, 1\leq |\alpha_i|\leq k_1]\right) \\
 &\qquad \qquad\times
 \left[\begin{matrix}\left[\begin{matrix}  \sqrt{a_{\alpha_1} \cdots a_{\alpha_{p-1}}a_{\alpha_{p}}}(T_{1,\alpha_p \alpha_{p-1}\cdots \alpha_1})^*\\
\vdots\\
 \alpha_1,\ldots, \alpha_{p-1}\in \FF_{n_1}^+, 1\leq|\alpha_i|\leq k_1
 \end{matrix}\right]\\
 \vdots\\
 \alpha_p\in \FF_{n_1}^+, 1\leq|\alpha_p|\leq k_1
\end{matrix}\right].
 \end{split}
 \end{equation*}
 The proof is complete.
  \end{proof}

 \begin{lemma} \label{series} Let ${\bf T}_2\in \cI({\bf T}_1,{\bf T}_1')$
     and let
$ U=\left[ \begin{matrix} A&B\\C&D \end{matrix}\right]
$
be the matrix representation of a unitary extension  $U\in \cU_{\bf T}^\cK$ (see relation \eqref{ABCD}).
If
\begin{equation*}
\begin{split}
{\bf X}_1&:=[\sqrt{a_\alpha} T_{1,\alpha}:\ \alpha\in \FF_{n_1}^+, 1\leq |\alpha|\leq k_1 ],\\
 m_j&:=\text{ \rm card} \{\alpha\in \FF_{n_j}^+: \ 1\leq |\alpha|\leq k_j\}, \quad  j=1,2,
\end{split}
\end{equation*}
and ${\bf T}_1$ is a pure element in $\cD_f(\cH)$,  then
 \begin{equation*}
  \begin{split}
  \text{\rm diag}_{m_2}(\Delta_{{\bf T}_1'})&\left[\begin{matrix}
  \sqrt{c_\beta}T_{2,\beta}^*\\
  \vdots\\
  \beta\in \FF_{n_2}^+, 1\leq |\beta|\leq k_2\end{matrix}
  \right]\\
  &
 =A\Delta_{{\bf T}_1}h
+B\sum_{p=0}^\infty \text{\rm diag}_{m_1}\left(D\text{\rm diag}_{m_1}\left(\cdots
D\text{\rm diag}_{m_1}\left(C\Delta_{{\bf T}_1}\right) {\bf X}_1^*\cdots\right){\bf X}_1^*\right){\bf X}_1^*h,
 \end{split}
  \end{equation*}
for any $h\in \cH$, where $\text{\rm diag}_{m_1}$ appears $p+1$ times in the general term of  the series.
 \end{lemma}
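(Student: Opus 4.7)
The approach is to read the unitarity of $U$ as two block-row identities, use one of them as a fixed-point equation for $\widehat\Delta_{{\bf T}_2}$, iterate it, and dispatch the remainder by combining Lemma~\ref{le1} with the purity of ${\bf T}_1$. Concretely, I would apply $U$ to the input vector of \eqref{iso2}, using the block decomposition \eqref{ABCD} and writing the second input block as $\text{\rm diag}_{m_1}(\widehat\Delta_{{\bf T}_2}){\bf X}_1^*h$. The top block-row then yields
\begin{equation*}
A\Delta_{{\bf T}_1}h+B\,\text{\rm diag}_{m_1}(\widehat\Delta_{{\bf T}_2}){\bf X}_1^*h=\text{\rm diag}_{m_2}(\Delta_{{\bf T}_1'})\left[\begin{matrix}\sqrt{c_\beta}T_{2,\beta}^*h\\ \vdots\\ \beta\in\FF_{n_2}^+,\,1\le|\beta|\le k_2\end{matrix}\right],
\end{equation*}
while matching the second block of the output produces the fixed-point identity
\begin{equation*}
\widehat\Delta_{{\bf T}_2}h=C\Delta_{{\bf T}_1}h+D\,\text{\rm diag}_{m_1}(\widehat\Delta_{{\bf T}_2}){\bf X}_1^*h,\qquad h\in\cH.
\end{equation*}

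Next, I would replace $h$ by $T_{1,\alpha}^*h$ in the fixed-point identity, scale by $\sqrt{a_\alpha}$, and stack the resulting equations over $\alpha\in\FF_{n_1}^+$ with $1\le|\alpha|\le k_1$ to obtain
\begin{equation*}
\text{\rm diag}_{m_1}(\widehat\Delta_{{\bf T}_2}){\bf X}_1^*h=\text{\rm diag}_{m_1}(C\Delta_{{\bf T}_1}){\bf X}_1^*h+\text{\rm diag}_{m_1}\!\bigl(D\,\text{\rm diag}_{m_1}(\widehat\Delta_{{\bf T}_2}){\bf X}_1^*\bigr){\bf X}_1^*h.
\end{equation*}
Since the operation $Z\mapsto\text{\rm diag}_{m_1}(DZ){\bf X}_1^*$ is additive, iterating this identity $N$ times produces the first $N$ terms of the claimed series plus a remainder
\begin{equation*}
R_Nh:=\text{\rm diag}_{m_1}\!\bigl(D\,\text{\rm diag}_{m_1}(\cdots D\,\text{\rm diag}_{m_1}(\widehat\Delta_{{\bf T}_2}){\bf X}_1^*\cdots){\bf X}_1^*\bigr){\bf X}_1^*h
\end{equation*}
in which $\text{\rm diag}_{m_1}$ occurs $N+1$ times.

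Finally, I would show $R_Nh\to 0$ as $N\to\infty$. Applying Lemma~\ref{le1} with $p=N+1$ rewrites $R_Nh$ as a contraction (because $\|D\|\le 1$, $\text{\rm diag}_{m_1}(\cdot)$ preserves norms, and $\|\widehat\Delta_{{\bf T}_2}\|\le 1$) applied to the column $\bigl[\sqrt{a_{\alpha_1}\cdots a_{\alpha_{N+1}}}(T_{1,\alpha_{N+1}}\cdots T_{1,\alpha_1})^*h\bigr]$, whose squared norm equals $\langle\Phi_{f,{\bf T}_1}^{N+1}(I)h,h\rangle$. This tends to $0$ because ${\bf T}_1$ is pure in $\cD_f(\cH)$. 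Substituting the resulting convergent series for $\text{\rm diag}_{m_1}(\widehat\Delta_{{\bf T}_2}){\bf X}_1^*h$ into the first block-row identity then yields the lemma. The main obstacle is the bookkeeping involved in matching the iterated form to the precise nested expression in the statement; Lemma~\ref{le1} is the device that converts the remainder into a column whose norm is governed by $\Phi_{f,{\bf T}_1}^{N+1}(I)$, so that the purity hypothesis closes the argument.
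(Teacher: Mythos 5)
Your proposal is correct and follows essentially the same route as the paper: the two block-row identities from \eqref{iso2} (the paper's \eqref{AA} and \eqref{CC}), the stacked fixed-point equation \eqref{Di}, its iteration, and the use of Lemma \ref{le1} together with purity of ${\bf T}_1$ to kill the remainder whose norm is controlled by $\left<\Phi_{f,{\bf T}_1}^{N+1}(I)h,h\right>$. The only differences are cosmetic (indexing of the remainder and your cleaner statement that the \emph{squared} norm of the column equals the quadratic form of $\Phi_{f,{\bf T}_1}^{N+1}(I)$).
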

 \begin{proof}
 Due to relation \eqref{iso2}, we have
\begin{equation}\label{A}
A\Delta_{{\bf T}_1}h +B\left[\begin{matrix}\left[\begin{matrix}
\Delta_{{\bf T}_2}\sqrt{a_\alpha} T_{1,\alpha}^*h\\0
\end{matrix}\right]\\
\vdots  \\
\alpha\in \FF_{n_1}^+, 1\leq |\alpha|\leq k_1 \end{matrix}\right]
=
\left[\begin{matrix}
  \Delta_{{\bf T}_1'}\sqrt{c_\beta}T_{2,\beta}^*\\
  \vdots\\
  \beta\in \FF_{n_2}^+, 1\leq |\beta|\leq k_2\end{matrix}
  \right]
\end{equation}
and
\begin{equation}\label{C}
C\Delta_{{\bf T}_1}h +D\left[\begin{matrix}\left[\begin{matrix}
\Delta_{{\bf T}_2}\sqrt{a_\alpha} T_{1,\alpha}^*h\\0
\end{matrix}\right]\\
\vdots  \\
\alpha\in \FF_{n_1}^+, 1\leq |\alpha|\leq k_1 \end{matrix}\right]
=
\left[\begin{matrix}
\Delta_{{\bf T}_2} h\\0
\end{matrix}\right]
\end{equation}
for any $h\in \cH$.
Since $\widehat\Delta_{{\bf T}_2}:=\left[\begin{matrix} \Delta_{{\bf T}_2}\\0\end{matrix}\right]:\cH\to \cD_{{\bf T}_2}\oplus \cK$, we can rewrite relations \eqref{A}  and \eqref{C} as
 \begin{equation}
 \label{AA}
 A\Delta_{{\bf T}_1}+B\text{\rm diag}_{m_1}(\widehat\Delta_{{\bf T}_2}) {\bf X}_1^*
 =
 \text{\rm diag}_{m_2}(\Delta_{{\bf T}_1'}) {\bf X}_2^*,
 \end{equation}
 where ${\bf X}_2:=[\sqrt{c_\beta} T_{2,\beta}:\ \beta\in \FF_{n_2}^+, 1\leq |\beta|\leq k_2 ]$,
and
\begin{equation}
 \label{CC}
 C\Delta_{{\bf T}_1}+D\text{\rm diag}_{m_1}(\widehat\Delta_{{\bf T}_2}) {\bf X}_1^*
 =
 \widehat\Delta_{{\bf T}_2},
 \end{equation}
respectively. Note that  using relation \eqref{CC}  we deduce that
\begin{equation}
\label{Di}
\text{\rm diag}_{m_1}\left(\widehat\Delta_{{\bf T}_2}\right){\bf X}_1^*=\text{\rm diag}_{m_1}\left(C\Delta_{{\bf T}_1}\right){\bf X}_1^*
+\text{\rm diag}_{m_1}\left(D\text{\rm diag}_{m_1}(\widehat\Delta_{{\bf T}_2}) {\bf X}_1^*\right){\bf X}_1^*,
\end{equation}
which combined with relation \eqref{AA} yields

\begin{equation*}
\begin{split}
\text{\rm diag}_{m_2}(\Delta_{{\bf T}_1'}) {\bf X}_2^*
=A\Delta_{{\bf T}_1}+B \text{\rm diag}_{m_1}\left(C\Delta_{{\bf T}_1}\right){\bf X}_1^*
+B\text{\rm diag}_{m_1}\left(D\text{\rm diag}_{m_1}(\widehat\Delta_{{\bf T}_2}) {\bf X}_1^*\right){\bf X}_1^*.
\end{split}
\end{equation*}
  Continuing to use relation \eqref{Di} in the latter relation  and the resulting ones, an induction argument leads to  the identity
  \begin{equation}\label{rel}
  \begin{split}
  \text{\rm diag}_{m_2}(\Delta_{{\bf T}_1'}) {\bf X}_2^*
=A\Delta_{{\bf T}_1}&+B \text{\rm diag}_{m_1}\left(C\Delta_{{\bf T}_1}\right){\bf X}_1^*\\
&+B\sum_{p=1}^m \text{\rm diag}_{m_1}\left(D\text{\rm diag}_{m_1}\left(\cdots
D\text{\rm diag}_{m_1}\left(C\Delta_{{\bf T}_1}\right) {\bf X}_1^*\cdots\right){\bf X}_1^*\right){\bf X}_1^* \\
&+B\text{\rm diag}_{m_1}\left(D\text{\rm diag}_{m_1}\left(\cdots
D\text{\rm diag}_{m_1}\left(\widehat\Delta_{{\bf T}_2}\right)
 {\bf X}_1^*\cdots\right){\bf X}_1^*\right){\bf X}_1^*,
  \end{split}
  \end{equation}
where $\text{\rm diag}_{m_1}$ appears $p+1$ times in the general term of  the sum above and $m+2$ times in the last term.
Since $\Delta_{{\bf T}_2}$ and $D$ are contractions and due to  Lemma \ref{le1}, one can easily see that
\begin{equation*}
\begin{split}
&\left\|
B\text{\rm diag}_{m_1}\left(D\text{\rm diag}_{m_1}\left(\cdots
D\text{\rm diag}_{m_1}\left(\widehat\Delta_{{\bf T}_2}\right)
 {\bf T}_1^*\cdots\right){\bf T}_1^*\right){\bf T}_1^*h\right\|\\
 &\qquad
 \leq \|B\|\left(\sum_{{\alpha_1,\ldots, \alpha_{m+2}\in \FF_{n_1}^+}\atop { 1\leq |\alpha_i|\leq k_1}}\|\sqrt{a_{\alpha_1}\cdots a_{\alpha_{m+2}}}(T_{1,\alpha_{m+2}}\cdots T_{1,\alpha_1})^*h\|^2\right)^{1/2}=\left< \Phi_{f,{\bf T}_1}^{m+2}(I)h, h\right>
 \end{split}
 \end{equation*}
 for any $h\in \cH$.
 Since ${\bf T}_1$ is pure in $\cD_f(\cH)$,  we have
 $\lim_{m\to \infty}\Phi_{f,{\bf T}_1}^{m+2}(I)h=0$ for any $h\in \cH$.
Consequently, relation \eqref{rel} implies
\begin{equation*}
  \begin{split}
  \text{\rm diag}_{m_2}(\Delta_{{\bf T}_1'}) {\bf X}_2^*h
=A\Delta_{{\bf T}_1}h
+B\sum_{p=0}^\infty \text{\rm diag}_{m_1}\left(D\text{\rm diag}_{m_1}\left(\cdots
D\text{\rm diag}_{m_1}\left(C\Delta_{{\bf T}_1}\right) {\bf X}_1^*\cdots\right){\bf X}_1^*\right){\bf X}_1^*h,
 \end{split}
  \end{equation*}
for any $h\in \cH$, where $\text{\rm diag}_{m_1}$ appears $p+1$ times in the general term of  the series.
The proof is complete.
\end{proof}

 We recall   (\cite{Po-analytic}, \cite{Po-domains})
       a few facts
       concerning multi-analytic   operators on Fock
      spaces.
         We say that
       a bounded linear
        operator
      $M$ acting from $F^2(H_n)\otimes \cK$ to $ F^2(H_n)\otimes \cK'$ is
       multi-analytic with respect to the universal model ${\bf W}:=(W_1,\ldots, W_n)$
      if
      $
      M(W_i\otimes I_\cK)= (W_i\otimes I_{\cK'}) M\quad
      \text{\rm for any }\ i=1,\dots, n.
      $
       We can associate with $M$ a unique formal Fourier expansion
      $  \sum_{\alpha \in \FF_n^+}
      \Lambda_\alpha \otimes \theta_{(\alpha)}$
where $\theta_{(\alpha)}\in B(\cK, \cK')$.
       We  know  that
        $M =\text{\rm SOT-}\lim_{r\to 1}\sum_{k=0}^\infty
      \sum_{|\alpha|=k}
         r^{|\alpha|} \Lambda_\alpha\otimes \theta_{(\alpha)}
         $
         where, for each $r\in [0,1)$, the series converges in the uniform norm.
      Moreover, the set of  all multi-analytic operators in
      $B(F^2(H_n)\otimes \cK,
      F^2(H_n)\otimes \cK')$  coincides  with
      $\cR_n^\infty(\cD_f)\bar \otimes B(\cK,\cK')$,
      the WOT-closed operator space generated by the spatial tensor
      product.

Let $\cH$, $\cH'$, and $\cE$ be Hilbert spaces and
consider
$$ U=\left[ \begin{matrix} A&B\\C&D \end{matrix}\right]:\begin{matrix}\cH\\\oplus \\ \bigoplus\limits_{{\alpha\in \FF_{n_1}^+},{ 1\leq|\alpha|\leq k_1}}\cE\end{matrix}\to  \begin{matrix} \bigoplus\limits_{{\beta\in \FF_{n_2}^+},{ 1\leq|\beta|\leq k_2}}\cH'\\\oplus \\\cE\end{matrix}
$$
to be a unitary operator. Setting
$D=[D_{(\alpha)}: \quad \alpha\in \FF_{n_1}^+, 1\leq |\alpha|\leq 1]: \bigoplus\limits_{{\alpha\in \FF_{n_1}^+},{ 1\leq|\alpha|\leq k_1}}\cE\to \cE,
$
we associate with $U^*$  and any $r\in [0,1)$ the operator
$\varphi_{U^*}(r{\bf \Lambda}_1)$ defined by
\begin{equation*}
\begin{split}
\varphi_{U^*}(r{\bf \Lambda}_1):= I_{F^2(H_{n_1})}\otimes A^*&+\left(I_{F^2(H_{n_1})}\otimes C^*\right) \left(I_{F^2(H_{n_1})\otimes \cE}-\sum\limits_{{\alpha\in \FF_{n_1}^+}\atop{ 1\leq |\alpha|\leq k_1}}
r^{|\alpha|}\sqrt{a_{ \alpha}} \Lambda_{1,\tilde\alpha}\otimes D_{(\alpha)}^*\right)^{-1}\\
& \times \left[\sqrt{a_{  \alpha}} \Lambda_{1,\tilde\alpha}\otimes I_\cH:\ \alpha\in \FF_{n_1}^+, 1\leq |\alpha|\leq k_1\right]\left(I_{F^2(H_{n_1})}\otimes B^*\right),
\end{split}
\end{equation*}
where ${\bf \Lambda}_1:=[\Lambda_{1,1},\ldots, \Lambda_{1,n_1}]$ is the tuple of weighted right creation operators on $F^2(H_{n_1})$ associated with the regular domain $\cD_f$.
In what follows, we use the notations: ${\bf A}:=I_{F^2(H_{n_1})}\otimes A$, ${\bf B}:=I_{F^2(H_{n_1})}\otimes B$,
 ${\bf C}:=I_{F^2(H_{n_1})}\otimes C$, ${\bf D}:=I_{F^2(H_{n_1})}\otimes D$,
    and
 ${\bf \Gamma}(r):=\left[\sqrt{a_{  \alpha}} r^{|\alpha|} \Lambda_{1,\tilde\alpha}\otimes I_\cE:\ \alpha\in \FF_{n_1}^+, 1\leq |\alpha|\leq k_1\right]$.

\begin{lemma} \label{strong-limit} The strong operator topology limit
$\varphi_{U^*}({\bf \Lambda}_1):=\text{\rm SOT-}\lim_{r\to 1}\varphi_{U^*}(r{\bf \Lambda}_1)
$
 exists and defines a contractive  multi-analytic  operator with respect to ${\bf W}_1$ having the  row matrix representation
 $$\varphi_{U^*}({\bf \Lambda}_1)=[\varphi_{(\beta)}({\bf \Lambda}_1): \ \beta\in \FF_{n_2}^+, 1\leq |\beta|\leq k_2],
 $$
 with  $\varphi_{(\beta)}({\bf \Lambda}_1)\in \cR_{n_1}^\infty(\cD_{f})\bar\otimes B\left( \cH',\cH\right)$, where $\cR_{n_1}^\infty(\cD_{f}) $ is the noncommutative Hardy  algebra generated by the  weighted right creation operators $\Lambda_{1,1},\ldots, \Lambda_{1,n_1}$ and the identity.

\end{lemma}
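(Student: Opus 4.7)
The plan has three stages: for each $r\in[0,1)$ (i) expand the inverse as a norm-convergent Neumann series, placing $\varphi_{U^*}(r{\bf\Lambda}_1)$ in the algebra $\cR_{n_1}^\infty(\cD_f)\bar\otimes B(\bigoplus_\beta\cH',\cH)$; (ii) establish uniform contractivity through a Schur-type identity coming from the unitarity of $U$; (iii) pass to the SOT limit as $r\to 1^-$ via an Abel--Poisson argument on the weighted Fock space.

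For stage (i), set $S(r):=\sum_{1\le|\alpha|\le k_1} r^{|\alpha|}\sqrt{a_\alpha}\,\Lambda_{1,\tilde\alpha}\otimes D_{(\alpha)}^*$ and factor $S(r)={\bf\Gamma}(r)(I_{F^2(H_{n_1})}\otimes D^*)$. Since ${\bf\Lambda}_1\in\cD_{\tilde f}(F^2(H_{n_1}))$, we have $\sum_{|\alpha|\ge 1} a_\alpha\Lambda_{1,\tilde\alpha}\Lambda_{1,\tilde\alpha}^*\le I$; combining this with $\|D\|\le 1$ and the factor $r^{|\alpha|}$ (with $|\alpha|\ge 1$) gives $\|S(r)\|\le r<1$ via a row Cauchy--Schwarz estimate. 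Hence the Neumann series $(I-S(r))^{-1}=\sum_{p\ge 0}S(r)^p$ converges in operator norm. Expanding produces a norm-convergent series for $\varphi_{U^*}(r{\bf\Lambda}_1)$ whose terms are polynomials in $\Lambda_{1,1},\ldots,\Lambda_{1,n_1}$ tensored with bounded operators; since such polynomials lie in $\cR_{n_1}^\infty(\cD_f)=F_{n_1}^\infty(\cD_f)'$, they commute with ${\bf W}_1\otimes I$, showing that $\varphi_{U^*}(r{\bf\Lambda}_1)$ is multi-analytic with respect to ${\bf W}_1$ and belongs to $\cR_{n_1}^\infty(\cD_f)\bar\otimes B(\bigoplus_\beta\cH',\cH)$. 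Splitting the row domain $\bigoplus_\beta\cH'$ produces the block entries $\varphi_{(\beta)}(r{\bf\Lambda}_1)$.

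For stage (ii), the relations $A^*A+C^*C=I$, $B^*B+D^*D=I$ and $A^*B+C^*D=0$ arising from $U^*U=I$ allow a direct computation of $I-\varphi_{U^*}(r{\bf\Lambda}_1)\varphi_{U^*}(r{\bf\Lambda}_1)^*$, factoring it as $\Theta(r)\bigl(I-{\bf\Gamma}(r){\bf\Gamma}(r)^*\bigr)\Theta(r)^*$ for an operator $\Theta(r)$ built from $(I-S(r))^{-1}$ and the blocks of $U^*$. Since ${\bf\Gamma}(r){\bf\Gamma}(r)^*\le r^2 I$, this expression is positive, yielding the uniform bound $\|\varphi_{U^*}(r{\bf\Lambda}_1)\|\le 1$. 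Equivalently, one can recognize the formula as the Schur--Potapov transform $F(\Gamma^*)^*$ of the unitary $U$, which is classically known to map contractions to contractions.

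The main obstacle is stage (iii). The uniformly bounded family $\{\varphi_{U^*}(r{\bf\Lambda}_1)\}_{r\in[0,1)}$ of multi-analytic operators has, for each $p\ge 0$, a $p$-th Fourier component with respect to the weighted right creation operators that is a polynomial in $r$ with a well-defined limit as $r\to 1^-$. Invoking the Abel/Ces\`aro summability theory for multi-analytic operators on weighted Fock spaces (the noncommutative Fatou-type result established in the domain-algebra setting), the net converges in the strong operator topology to a contractive multi-analytic operator $\varphi_{U^*}({\bf\Lambda}_1)\in\cR_{n_1}^\infty(\cD_f)\bar\otimes B(\bigoplus_\beta\cH',\cH)$. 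Its row decomposition then yields the claimed expansion $\varphi_{U^*}({\bf\Lambda}_1)=[\varphi_{(\beta)}({\bf\Lambda}_1):\ \beta\in\FF_{n_2}^+,\ 1\le|\beta|\le k_2]$.
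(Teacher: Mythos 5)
Your proposal is correct and follows essentially the same route as the paper: the norm bound $\|S(r)\|\le r<1$ via the factorization through ${\bf\Gamma}(r)$ and the contraction $D$, the defect identity $I-\varphi_{U^*}(r{\bf\Lambda}_1)\varphi_{U^*}(r{\bf\Lambda}_1)^*={\bf C}^*(I-{\bf\Gamma}(r){\bf D}^*)^{-1}\bigl(I-{\bf\Gamma}(r){\bf\Gamma}(r)^*\bigr)(I-{\bf D}{\bf\Gamma}(r)^*)^{-1}{\bf C}$ coming from unitarity, and the passage to the SOT limit via the Fourier representation and the standard Abel-summability fact for multi-analytic operators on the weighted Fock space are exactly the three steps of the paper's argument.
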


\begin{proof} Since ${\bf D}$ and ${\bf \Gamma}(r)$ are contractions, we have $\left\|\sum\limits_{{\alpha\in \FF_{n_1}^+},{ 1\leq |\alpha|\leq k_1}}
r^{|\alpha|}\sqrt{a_{  \alpha}} \Lambda_{1,\tilde\alpha}\otimes D_{(\alpha)}^*\right\|\leq r<1$. Consequently, the operator $\varphi_{U^*}(r{\bf \Lambda}_1)$ makes sense and
$\varphi_{U^*}(r{\bf \Lambda}_1)=[\varphi_{(\beta)}(r{\bf \Lambda}_1): \ \beta\in \FF_{n_2}^+, 1\leq |\beta|\leq k_2 ]$ with  $\varphi_{(\beta)}(r{\bf \Lambda}_1)\in \cR_{n_1} (\cD_{f})\bar\otimes B\left( \cH',\cH\right)$, where $\cR_{n_1} (\cD_{f})$ is the noncommutative disk algebra generated by $\Lambda_{1,1},\ldots, \Lambda_{1,n_1}$ and the identity.
Consequently,
\begin{equation}\label{fi-rep}
\varphi_{(\beta)}(r{\bf \Lambda}_1)=\sum_{k=0}^\infty \sum_{\gamma\in \FF_{n_1}^+, |\gamma|=k} r^{|\gamma|} {\bf \Lambda}_{1,\gamma}\otimes \Theta_{(\gamma)}^{(\beta)}
\end{equation}
for some operators $\Theta_{(\gamma)}^{(\beta)}\in B(\cH',\cH)$, where the convergence is in the operator norm topology.
On the other hand, since $ \left[ \begin{matrix} {\bf A}^*&{\bf C}^*\\{\bf B}^*&{\bf  D}^* \end{matrix}\right]
$
is a unitary operator, standard calculations (see e.g. \cite{Po-varieties})    show that
\begin{equation*}
\begin{split}
I&-\varphi_{U^*}(r{\bf \Lambda}_1)\varphi_{U^*}(r{\bf \Lambda}_1)^*\\
&=  {\bf C}^*(I-{\bf  \Gamma}(r) {\bf D}^*)^{-1}\left[\left( I-\sum_{\alpha\in \FF_{n_1}^+, 1\leq |\alpha|\leq k_1} r^{2|\alpha|}a_{  \alpha} \Lambda_{1,\tilde\alpha} \Lambda_{1,\tilde\alpha}^*\right)\otimes I\right] (I-  {\bf D}{\bf \Gamma}(r)^*)^{-1}{\bf C}.
\end{split}
\end{equation*}
This    shows that $\varphi_{U^*}(r{\bf \Lambda}_1)$ is a contraction for any $r\in [0,1)$ having the row matrix representation    $\varphi_{U^*}(r{\bf \Lambda}_1)=[\varphi_{(\beta)}(r{\bf \Lambda}_1): \ \beta\in \FF_{n_2}^+, 1\leq |\beta|\leq k_2 ]$ with  $\varphi_{(\beta)}(r{\bf \Lambda}_1)\in \cR_{n_1} (\cD_{f})\bar\otimes B\left( \cH',\cH\right)$. Since $\varphi_{(\beta)}(r{\bf \Lambda}_1)$
 has the  Fourier representation \eqref{fi-rep}, one can see
 that
  $\varphi_{(\beta)}({\bf \Lambda}_1):=\text{\rm SOT-}\lim_{r\to 1}\varphi_{(\beta)}(r{\bf \Lambda}_1)$ exists,
  $\varphi_{(\beta)}({\bf \Lambda}_1)\in \cR_{n_1}^\infty\bar\otimes B\left(\cH',\cH\right)$,  and $\|\varphi_{U^*}({\bf \Lambda}_1)\|\leq 1$.
The proof is complete.
\end{proof}

 We recall that $\cI({\bf T}_1,{\bf T}_1')$ is  the set of all tuples  ${\bf T}_2:=(T_{2,1},\ldots, T_{2,n_2})$, with $T_{2,j}:\cH'\to \cH$,  such that ${\bf T}_2\in \cD_{g}(\cH', \cH)$    intertwines ${\bf T}_1$ with ${\bf T}_1'$, i.e.
   $
   T_{2,j}T_{1,i}'=T_{1,i}T_{2,j}
   $
   for any $i\in \{1,\ldots, n_1\}$ and $j\in \{1,\ldots, n_2\}$.
   The main result of this section is the following  intertwining  dilation theorem for  the elements of $\cI({\bf T}_1,{\bf T}_1')$.

\begin{theorem} \label{dil}  Let
  ${\bf T}_1:=(T_{1,1},\ldots, T_{1,n_1})\in \cD_f(\cH)$ and  ${\bf T}_1':=(T_{1,1}',\ldots, T_{1,n_1}')\in \cD_f(\cH')$, and  let ${\bf T}_2:=(T_{2,1},\ldots, T_{2,n_2})\in \cD_g(\cH', \cH)$ be  such that ${\bf T}_2\in \cI({\bf T}_1,{\bf T}_1')$. Let ${\bf W}_1:=(W_{1,1},\ldots, W_{1,n_1})$ and ${\bf \Lambda}_1:=(\Lambda_{1,1},\ldots, \Lambda_{1, n_1})$ be the weighted creation operators associated with the noncommutative domain $\cD_f$.
If
$\varphi_{U^*}({\bf \Lambda}_1)=[\varphi_{(\beta)}({\bf \Lambda}_1): \ \beta\in \FF_{n_2}^+, 1\leq |\beta|\leq k_2 ]$ is the contractive multi-analytic operator associated with $U\in \cU_{\bf T}^\cK$ and
   ${\bf T}_1$ is a  pure element of the noncommutative regular domain $\cD_f(\cH)$,   then the following relations hold:
$$
K_{f,{\bf T}_1'} T_{2,\beta}^* =\frac{1}{\sqrt{c_{\beta}}}\varphi_{(\beta)} ({\bf \Lambda}_1)^*K_{f,{\bf T}_1}, \qquad \beta\in \FF_{n_2}^+, 1\leq |\beta|\leq k_2,
$$
and
$$
K_{f,{\bf T}_1}T_{1,i}^*=
\left(W_{1,i}^*\otimes I_{\cD_{{\bf T}_1}}\right)  K_{f,{\bf T}_1},\quad
K_{f,{\bf T}_1'}(T_{1,i}')^*=
\left(W_{1,i}^*\otimes I_{\cD_{{\bf T}_1'}}\right)  K_{f,{\bf T}_1'}, \qquad i\in \{1,\ldots, n_1\},
$$
 where  $K_{f,{\bf T}_1}$ and $K_{f,{\bf T}_1'}$ are the   noncommutative Poisson kernels associated with ${\bf T}_1$ and ${\bf T}_1'$, respectively.
\end{theorem}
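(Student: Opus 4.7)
My plan is to reduce the theorem to matching Fourier coefficients term by term using the series expansion of Lemma~\ref{series}. The two intertwining identities for the universal model ${\bf W}_1$ are immediate instances of the general Poisson kernel relation \eqref{ker-inter} applied, respectively, to ${\bf T}_1\in\cD_f(\cH)$ and ${\bf T}_1'\in\cD_f(\cH')$; neither requires the purity hypothesis. The substantive content is therefore the identity $K_{f,{\bf T}_1'}T_{2,\beta}^*=\frac{1}{\sqrt{c_\beta}}\varphi_{(\beta)}({\bf \Lambda}_1)^*K_{f,{\bf T}_1}$. Using the intertwining hypothesis in the form $(T_{1,\gamma}')^*T_{2,\beta}^*=T_{2,\beta}^*T_{1,\gamma}^*$ for every $\gamma\in\FF_{n_1}^+$, the $e_\gamma$-Fourier coefficient of $\sqrt{c_\beta}\,K_{f,{\bf T}_1'}T_{2,\beta}^*h$ equals $\sqrt{c_\beta\,b_\gamma}\,\Delta_{{\bf T}_1'}T_{2,\beta}^*T_{1,\gamma}^*h$.

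For the right-hand side, I would expand $\varphi_{(\beta)}(r{\bf \Lambda}_1)^*$ for $r\in[0,1)$ via the Neumann series for $(I-\sum_\alpha r^{|\alpha|}\sqrt{a_\alpha}\Lambda_{1,\tilde\alpha}^*\otimes D_{(\alpha)})^{-1}$. Collecting the products of weighted right creation operators via $\Lambda_{\mu_1}^*\Lambda_{\mu_2}^*=\Lambda_{\mu_2\mu_1}^*$, this produces
\[
\varphi_{(\beta)}(r{\bf \Lambda}_1)^*=I\otimes A_\beta+\sum_{p=0}^\infty\sum_{\alpha_1,\ldots,\alpha_{p+1}}r^{\sum|\alpha_i|}\sqrt{a_{\alpha_1}\cdots a_{\alpha_{p+1}}}\,\Lambda_{1,\widetilde{\alpha_{p+1}\alpha_1\cdots\alpha_p}}^*\otimes B_{\beta,\alpha_{p+1}}D_{(\alpha_1)}\cdots D_{(\alpha_p)}C,
\]
where $A_\beta$ and $B_{\beta,\alpha}$ denote the block entries of $A$ and $B$ from \eqref{ABCD}. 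Applying this to $K_{f,{\bf T}_1}h=\sum_\delta\sqrt{b_\delta}\,e_\delta\otimes\Delta_{{\bf T}_1}T_{1,\delta}^*h$ and using the action rule $\Lambda_{1,\tilde\mu}^*(\sqrt{b_\delta}e_\delta)=\sqrt{b_{\delta'}}e_{\delta'}$ exactly when $\delta=\delta'\mu$, the $e_\gamma$-Fourier coefficient is isolated by taking $\delta'=\gamma$, which forces $\delta=\gamma\alpha_{p+1}\alpha_1\cdots\alpha_p$ and hence $T_{1,\delta}^*=T_{1,\alpha_p}^*\cdots T_{1,\alpha_1}^*T_{1,\alpha_{p+1}}^*T_{1,\gamma}^*$. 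After the harmless relabeling $(\alpha_1,\ldots,\alpha_p)\mapsto(\alpha_p,\ldots,\alpha_1)$ (which simultaneously reverses the $D$-product and the $T^*$-product) and letting $r\to 1$, the resulting series applied at $T_{1,\gamma}^*h\in\cH$ is exactly the expansion of Lemma~\ref{series}, whose sum is $\sqrt{c_\beta}\,\Delta_{{\bf T}_1'}T_{2,\beta}^*T_{1,\gamma}^*h$. The $e_\gamma$-coefficient of $\varphi_{(\beta)}({\bf \Lambda}_1)^*K_{f,{\bf T}_1}h$ therefore equals $\sqrt{c_\beta\,b_\gamma}\,\Delta_{{\bf T}_1'}T_{2,\beta}^*T_{1,\gamma}^*h$, matching the left-hand side.

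The main technical obstacle will be justifying the interchange of the $r\to 1$ limit with the infinite summation, so that the $r=1$ series indeed sums to what Lemma~\ref{series} predicts. This should follow by combining the SOT-convergence $\varphi_{U^*}({\bf \Lambda}_1)=\text{SOT-}\lim_{r\to 1}\varphi_{U^*}(r{\bf \Lambda}_1)$ and the uniform contractivity $\|\varphi_{U^*}(r{\bf \Lambda}_1)\|\le 1$ furnished by Lemma~\ref{strong-limit}, together with a tail bound of the same form as in the proof of Lemma~\ref{series}: the $p$-tail applied to $h$ is controlled by $\langle\Phi_{f,{\bf T}_1}^{m}(I)h,h\rangle$, which vanishes by the purity of ${\bf T}_1$.
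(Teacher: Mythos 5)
Your proposal is correct and follows essentially the same route as the paper: the two universal-model identities are read off from \eqref{ker-inter}, and the main identity is obtained by expanding the Neumann series of $\varphi_{(\beta)}({\bf \Lambda}_1)^*$ against the Poisson kernel $K_{f,{\bf T}_1}h$, matching $e_\gamma$-Fourier coefficients, and recognizing the resulting series as the one summed in Lemma \ref{series}, which is exactly where the purity of ${\bf T}_1$ enters. The paper carries out the identical computation in inner-product form, expanding $\varphi_{U^*}({\bf \Lambda}_1)(e_\alpha\otimes y)$ term by term via ${\bf C}^*{\bf Q}^q{\bf \Gamma}{\bf B}^*$ rather than applying the adjoint at $r<1$ and letting $r\to 1$, so the difference is purely presentational.
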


\begin{proof} Fix $U=\left[ \begin{matrix} A&B\\C&D \end{matrix}\right]\in \cU_{\bf T}^\cK$ and set
$$D:=[D_{(\alpha)}: \quad \alpha\in \FF_{n_1}^+, 1\leq |\alpha|\leq 1]: \bigoplus_{\alpha\in \FF_{n_1}^+, 1\leq |\alpha|\leq k_1}(\cD_{{\bf T}_2}\oplus \cK)\to \cD_{{\bf T}_2}\oplus \cK.
$$
In what follows, we use the notations: ${\bf A}:=I_{F^2(H_{n_1})}\otimes A$, ${\bf B}:=I_{F^2(H_{n_1})}\otimes B$,
 ${\bf C}:=I_{F^2(H_{n_1})}\otimes C$,
 \begin{equation*}
 \begin{split}
 {\bf Q}&:=\sum_{\alpha\in \FF_{n_1}^+, 1\leq |\alpha|\leq k_1} a_{ \alpha} \Lambda_{1,{\tilde\alpha}}\otimes D_{(\alpha)}^* \text{ and }
{\bf \Gamma}:=\left[\sqrt{a_{ \sigma}}\Lambda_{1,{\tilde\sigma}}\otimes I_{\cD_{{\bf T}_2}\oplus \cK}: \sigma\in \FF_{n_1}^+, 1\leq |\sigma|\leq k_1\right].
\end{split}
\end{equation*}
As in Lemma \ref{le1}, an induction argument  over $q$ shows that
\begin{equation}\label{diag-q}
\begin{split}
\text{\rm diag}_{m_1}\left(\text{\rm diag}_{m_1}\cdots\left(
\text{\rm diag}_{m_1}\left(C^*\right)
D^*\right)\cdots D^*\right) =
 \text{\rm diag}_{m_1}\left(\left[\begin{matrix}C^*(D_{ (\gamma_1)}\cdots D_{ (\gamma_q)})^*\\
 \vdots\\
 \gamma_i\in \FF_{n_1}^+, 1\leq |\gamma_i|\leq k_1\end{matrix}\right]\right),
\end{split}
\end{equation}
where $\text{\rm diag}_{m_1}$ appears  $q+1$ times on the left-hand side of the equality.

We associate with $U^*$   the  multi-analytic operator $\varphi_{U^*}({\bf \Lambda}_1)\in \cR_{n_1}^\infty(\cD_f)\bar\otimes B\left( \bigoplus_{\beta\in \FF_{n_2}^+, 1\leq |\beta|\leq k_2}\cD_{{\bf T}_1'}, \cD_{{\bf T}_1}\right)$,  as in Lemma \ref{strong-limit}, in the particular case when $\cH=\cD_{{\bf T}_1}$, $\cH'=\cD_{{\bf T}_1'}$, and $\cE=\cD_{{\bf T}_2}\oplus \cK$.
Note that, for each $y\in  \bigoplus_{\beta\in \FF_{n_2}^+, 1\leq |\beta|\leq k_2}\cD_{{\bf T}_1'}$ and $\alpha\in \FF_{n_1}^+$ with $|\alpha|=n$, we have

$$
\varphi_{U^*}({\bf \Lambda}_1)(e_\alpha\otimes y)=e_\alpha\otimes A^*y+
\sum_{q=0}^\infty {\bf C}^*{\bf Q}^q {\bf \Gamma}{\bf B}^*(e_\alpha\otimes y)
$$
and ${\bf C}^*{\bf Q}^q {\bf \Gamma}{\bf B}^*(e_\alpha\otimes y)$ is in the closed linear span of all the vectors $e_{\alpha\alpha_1\cdots \alpha_{q+1}}\otimes z$, where $\alpha_1,\ldots, \alpha_{q+1}\in \FF_{n_1}^+, 1\leq|\alpha_i|\leq k_1$ and $z\in \cD_{{\bf T}_1}$. Consequently, using the noncommutative Poisson kernel $K_{{\bf T}_1}$,  we  deduce that
\begin{equation}\label{three}
\begin{split}
&\left<\varphi_{U^*}({\bf \Lambda}_1)^*K_{f,{\bf T}_1}h, e_\alpha\otimes y\right>\\
&=\left< \sum_{k=0}^\infty \sum_{\beta\in \FF_{n_1}^+,|\beta|=k}
 \sqrt{b_\beta} e_\beta\otimes \Delta_{{\bf T}_1}T_{1,\beta}^*h, \varphi_{U^*}({\bf \Lambda}_1)(e_\alpha\otimes y)\right>\\
&=
\left<\sqrt{b_\alpha}\Delta_{{\bf T}_1}T_{1,\alpha}^*h, A^*y\right>
+\sum_{q=0}^\infty \left< \sum_{{\gamma\in \FF_{n_1}^+} }\sqrt{b_{\alpha \gamma}}e_{\alpha\gamma}\otimes \Delta_{{\bf T}_1}T_{1,\gamma}^* T_{1,\alpha}^*h, {\bf C}^*{\bf Q}^q {\bf \Gamma}{\bf B}^*(e_\alpha\otimes y)
\right>,
\end{split}
\end{equation}
for any $h\in \cH$, $y\in  \bigoplus_{\beta\in \FF_{n_2}^+, 1\leq |\beta|\leq k_2}\cD_{{\bf T}_1'}$, and $\alpha\in \FF_{n_1}^+$ with $|\alpha|=n$.
Setting
$$
B:=[B_{(\alpha)}: \quad \alpha\in \FF_{n_1}^+, 1\leq |\alpha|\leq 1]:\bigoplus_{\alpha\in \FF_{n_1}^+, 1\leq |\alpha|\leq k_1}(\cD_{{\bf T}_2}\oplus \cK)\to \bigoplus_{\beta\in \FF_{n_2}^+, 1\leq |\beta|\leq k_2}\cD_{{\bf T}_1'},
$$
we obtain
\begin{equation*}
\begin{split}
&{\bf C}^*{\bf Q}^q {\bf \Gamma}{\bf B}^*(e_\alpha\otimes y)\\
&=
(I_{F^2(H_{n_1})}\otimes C^*)\left(\sum_{{\alpha_1,\ldots, \alpha_q\in \FF_{n_1}^+}\atop{1\leq |\alpha_i|\leq k_1}}\sqrt{a_{\alpha_1}\cdots a_{\alpha_q}}\Lambda_{1,{\tilde\alpha_1}}\cdots \Lambda_{1,{\tilde\alpha_q}}\otimes
 D^*_{(\alpha_1)}\cdots D^*_{(\alpha_q)}\right)\\
&
\left[\sqrt{a_{ \sigma}}\Lambda_{1,{\tilde\sigma}}\otimes I_{\cD_{{\bf T}_2}\oplus \cK}: \sigma\in \FF_{n_1}^+, 1\leq |\sigma|\leq k_1\right]
\left[\begin{matrix} I_{F^2(H_{n_1})}\otimes B_{(\sigma)}^*\\
\vdots\\\sigma\in \FF_{n_1}^+, 1\leq |\sigma|\leq k_1\end{matrix}\right](e_\alpha\otimes y)\\
&=
 \sum_{{\sigma\in \FF_{n_1}^+}\atop {1\leq |\sigma|\leq k_1}}
 \left(\sum_{{\alpha_1,\ldots, \alpha_q\in \FF_{n_1}^+}\atop{1\leq |\alpha_i|\leq k_1}}\sqrt{a_{\alpha_1}\cdots a_{\alpha_q}a_{\sigma}}\Lambda_{1,{\tilde\alpha_1}}\cdots \Lambda_{1,{\tilde\alpha_q}}\Lambda_{1,{\tilde \sigma}}e_\alpha\otimes
 C^*D^*_{(\alpha_1)}\cdots D^*_{(\alpha_q)}B_{(\sigma)}^*y\right)
\\
&=
 \sum_{{\sigma\in \FF_{n_1}^+}\atop {1\leq |\sigma|\leq k_1}}
 \left(\sum_{{\alpha_1,\ldots, \alpha_q\in \FF_{n_1}^+}\atop{1\leq |\alpha_i|\leq k_1}}\sqrt{a_{\alpha_1}\cdots a_{\alpha_q}a_{\sigma}}
 \frac{\sqrt{b_\alpha}}{\sqrt{b_{\alpha\sigma\alpha_q\cdots \alpha_1}}}
 e_{\alpha\sigma\alpha_q\cdots \alpha_1}
 C^*D^*_{(\alpha_1)}\cdots D^*_{(\alpha_q)}B_{(\sigma)}^*y\right),
\end{split}
\end{equation*}
where $\widetilde\sigma$ is the reverse of $\sigma\in \FF_{n_1}^+$.
Consequently, using relation \eqref{diag-q}, we deduce that
\begin{equation*}
\begin{split}
&\sum_{q=0}^\infty \left< \sum_{{\gamma\in \FF_{n_1}^+} }\sqrt{b_{\alpha \gamma}}e_{\alpha\gamma}\otimes \Delta_{{\bf T}_1}T_{1,\gamma}^* T_{1,\alpha}^*h, {\bf C}^*{\bf Q}^q {\bf \Gamma}{\bf B}^*(e_\alpha\otimes y)
\right>\\
&=
\sum_{q=0}^\infty \left< \sum_{{\gamma\in \FF_{n_1}^+} }\sqrt{b_{\alpha \gamma}}e_{\alpha\gamma}\otimes \Delta_{{\bf T}_1}T_{1,\gamma}^* T_{1,\alpha}^*h,\right.\\
 &\qquad \qquad \left.\sum_{{\sigma\in \FF_{n_1}^+}\atop {1\leq |\sigma|\leq k_1}}
 \left(\sum_{{\alpha_1,\ldots, \alpha_q\in \FF_{n_1}^+}\atop{1\leq |\alpha_i|\leq k_1}}\sqrt{a_{\alpha_1}\cdots a_{\alpha_q}a_{\sigma}}
 \frac{\sqrt{b_\alpha}}{\sqrt{b_{\alpha\sigma\alpha_q\cdots \alpha_1}}}
 e_{\alpha\sigma\alpha_q\cdots \alpha_1}
 C^*D^*_{(\alpha_1)}\cdots D^*_{(\alpha_q)}B_{(\sigma)}^*y\right)
\right>\\
&=
\sum_{q=0}^\infty
\sum_{{\sigma\in \FF_{n_1}^+}\atop {1\leq |\sigma|\leq k_1}}
 \sum_{{\alpha_1,\ldots, \alpha_q\in \FF_{n_1}^+}\atop{1\leq |\alpha_i|\leq k_1}}
 \left<
 \sqrt{b_{\alpha\sigma\alpha_q\cdots \alpha_1}}e_{\alpha\sigma\alpha_q\cdots \alpha_1}\otimes \Delta_{{\bf T}_1}T_{1,\sigma\alpha_q\cdots \alpha_1}^* T_{1,\alpha}^*h,\right.\\
 &\qquad \qquad\qquad \qquad\left.\sqrt{a_{\alpha_1}\cdots a_{\alpha_q}a_{\sigma}}
 \frac{\sqrt{b_\alpha}}{\sqrt{b_{\alpha\sigma\alpha_q\cdots \alpha_1}}}
 e_{\alpha\sigma\alpha_q\cdots \alpha_1}
 C^*D^*_{(\alpha_1)}\cdots D^*_{(\alpha_q)}B_{(\sigma)}^*y
 \right>\\
 &=\sum_{q=0}^\infty
\sum_{{\sigma\in \FF_{n_1}^+}\atop {1\leq |\sigma|\leq k_1}}
 \sum_{{\alpha_1,\ldots, \alpha_q\in \FF_{n_1}^+}\atop{1\leq |\alpha_i|\leq k_1}}
 \left<\sqrt{b_\alpha}\sqrt{a_{\alpha_1}\cdots a_{\alpha_q}a_{\sigma}}\Delta_{{\bf T}_1}T_{1,\sigma\alpha_q\cdots \alpha_1}^* T_{1,\alpha}^*h, C^*D^*_{(\alpha_1)}\cdots D^*_{(\alpha_q)}B_{(\sigma)}^*y
 \right>\\
 &=
 \sum_{q=0}^\infty
 \left<
 \left[ \begin{matrix}
 \left[ \begin{matrix} \sqrt{a_{\alpha_1}\cdots a_{\alpha_q}a_{\sigma}}\Delta_{{\bf T}_1}T_{1,\sigma\alpha_q\cdots \alpha_1}^*\\
 \vdots\\
 {\alpha_1,\ldots, \alpha_q\in \FF_{n_1}^+}\atop{1\leq |\alpha_i|\leq k_1}
 \end{matrix}\right]\\
 \vdots\\
 \sigma\in \FF_{n_1}^+, 1\leq |\sigma|\leq k_1
 \end{matrix}\right]\sqrt{b_\alpha} T_{1,\alpha}^*h,
 \left[ \begin{matrix}
 \left[ \begin{matrix} C^*D^*_{(\alpha_1)}\cdots D^*_{(\alpha_q)}B_{(\sigma)}^*y \\
 \vdots\\
 {\alpha_1,\ldots, \alpha_q\in \FF_{n_1}^+}\atop{1\leq |\alpha_i|\leq k_1}
 \end{matrix}\right]\\
 \vdots\\
 \sigma\in \FF_{n_1}^+, 1\leq |\sigma|\leq k_1
 \end{matrix}\right]
 \right>\\
 &=\sum_{q=0}^\infty\left<B \text{\rm diag}_{m_1}\left(D\text{\rm diag}_{m_1}\left(\cdots
D\text{\rm diag}_{m_1}\left(C\Delta_{{\bf T}_1}\right) {\bf X}_1^*\cdots\right){\bf X}_1^*\right){\bf X}_1^*\left(\sqrt{b_\alpha} T_{1,\alpha}^*\right)h, y\right>.
\end{split}
\end{equation*}
Hence and using relation \eqref{three}, we obtain
\begin{equation*}
\begin{split}
&\left<\varphi_{U^*}({\bf \Lambda}_1)^*K_{f,{\bf T}_1}h, e_\alpha\otimes y\right>\\
&=\left<\sqrt{b_\alpha}\Delta_{{\bf T}_1}T_{1,\alpha}^*h, A^*y\right>
+\sum_{q=0}^\infty\left<B \text{\rm diag}_{m_1}\left(D\text{\rm diag}_{m_1}\left(\cdots
D\text{\rm diag}_{m_1}\left(C\Delta_{{\bf T}_1}\right) {\bf X}_1^*\cdots\right){\bf X}_1^*\right){\bf X}_1^*\left(\sqrt{b_\alpha} T_{1,\alpha}^*\right)h, y\right>.
\end{split}
\end{equation*}
Now, using Lemma \ref{series}, we deduce that
\begin{equation*}
  \begin{split}
  \left<\varphi_{U^*}({\bf \Lambda}_1)^*K_{f,{\bf T}_1}h, e_\alpha\otimes y\right>=
  \left<\text{\rm diag}_{m_2}(\Delta_{{\bf T}_1'})\left[\begin{matrix}
  \sqrt{c_\beta}T_{2,\beta}^*\\
  \vdots\\
  \beta\in \FF_{n_2}^+, 1\leq |\beta|\leq k_2\end{matrix}
  \right] \left(\sqrt{b_\alpha} T_{1,\alpha}^*\right)h, y\right>
 \end{split}
  \end{equation*}
for any $h\in \cH$.
Hence, using the definition of the noncommutative Poisson kernel and the fact that ${\bf T}_2\in \cI({\bf T}_1,{\bf T}_1')$, we deduce that, for any $\beta\in \FF_{n_2}^+$ with $1\leq |\beta|\leq k_1$,  $h\in \cH$, and $z\in \cD_{{\bf T}_1'}$,
\begin{equation*}
\begin{split}
\left<K_{f,{\bf T}_1'}\sqrt{c_\beta} T_{2,\beta}^*h, e_\alpha\otimes z\right>
&=\left<
\sum_{k=0}^\infty \sum_{\sigma\in \FF_{n_1}^+|\sigma|=k}  e_\sigma\otimes
\sqrt{b_\sigma}\Delta_{{\bf T}_1'} (T_{1,\sigma}')^*\sqrt{c_\beta} T_{2,\beta}^*h,e_\alpha\otimes z\right>\\
&=
\left<\sqrt{b_\alpha}\Delta_{{\bf T}_1'} (T_{1,\alpha}')^*\sqrt{c_\beta}T_{2,\beta}^*h,z\right>
=
\left<\sqrt{b_\alpha}\Delta_{{\bf T}_1'} \sqrt{c_\beta}T_{2,\beta}^*T_{1,\alpha}^*h,z\right>\\
&=
\left<\varphi_{U^*}({\bf R})^*K_{{\bf T}_1}h,   e_\alpha\otimes y \right>=
\left<\varphi_{(\beta)}({\bf R})^*K_{{\bf T}_1}h, e_\alpha\otimes z\right>,
\end{split}
\end{equation*}
where $y=\bigoplus_{{\gamma\in \FF_{n_2}^+}\atop{ 1\leq|\gamma|\leq k_2} } y_{(\gamma)}$
with $y_{(\gamma)}=0$ if $\gamma\neq \beta$ and $y_{(\beta)}=z$.
Consequently,
$$
K_{f,{\bf T}_1'} T_{2,\beta}^* =\frac{1}{\sqrt{c_{\beta}}}\varphi_{(\beta)} ({\bf \Lambda}_1)^*K_{f,{\bf T}_1}, \qquad \beta\in \FF_{n_2}^+, 1\leq |\beta|\leq k_2.
$$
   The last two relations in the theorem are due to relation \eqref{ker-inter} applied to ${\bf T}_1\in \cD_f(\cH)$ and   ${\bf T}_1'\in \cD_f(\cH')$, respectively.
The proof is complete.
\end{proof}

As a consequence of Theorem \ref{dil}, we  obtain a new proof for the commutant lifting theorem for the  pure elements of the noncommutative domain $\cD_f$ (see \cite{Po-domains}) as well as  a constructive method to obtain the lifting.

\begin{theorem} \label{CLT}
Let ${\bf T}_1:=(T_{1, 1},\ldots, T_{1,n_1})\in \cD_f(\cH)$ and  ${\bf T}_1'=(T_{1,1}',\ldots, T_{1,n_1}')\in \cD_f(\cH')$ be pure tuples of operators and let
 ${\bf W}_1:=[W_{1,1},\ldots, W_{1,n_1}]$    be the universal model associated with the noncommutative domain $\cD_f$.
 If $A:\cH'\to \cH$  is  an operator such that
$$AT_{1,i}'=T_{1,i}A, \qquad i\in\{1,\ldots, n_1\},
 $$
 then there is an operator $D:F^2(H_{n_1})\otimes \cD_{{\bf T}_1'}\to F^2(H_{n_1})\otimes \cD_{{\bf T}_1}$ such that
$$
D(W_{1,i}\otimes I_{\cD_{{\bf T}'_1}})=(W_{1,i}\otimes I_{\cD_{{\bf T}_1}})D,  \qquad
i\in \{1,\ldots, n_1\},
$$
 $D^*|_\cH=A^*$, and $\|B\|=\|A\|$, where $\cH$ and $\cH'$ are identified with co-invariant subspaces of $\{W_{1,i}\otimes I_{\cD_{\bf T}}\}_{i=1}^{n_1}$ and $\{W_{1,i}\otimes I_{\cD_{{\bf T}'}}\}_{i=1}^{n_1}$, respectively.
\end{theorem}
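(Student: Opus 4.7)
The plan is to derive Theorem \ref{CLT} as a direct corollary of Theorem \ref{dil} by specializing the second polynomial $g$ to the single indeterminate $Z'$ (so $n_2=1$, $k_2=1$, with coefficient $c_{g_1}=1$). In that case the ``intertwining tuple'' is a single operator $T_{2,1}\in\cD_g(\cH',\cH)$, i.e.\ just a contraction $\cH'\to\cH$ intertwining ${\bf T}_1$ and ${\bf T}_1'$. Given $A$ with $AT_{1,i}'=T_{1,i}A$, assume $A\neq 0$ (else take $D=0$) and set $T_{2,1}:=A/\|A\|$; this is a contraction in $\cI({\bf T}_1,{\bf T}_1')$.

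Next, I apply Theorem \ref{dil} to $({\bf T}_1,{\bf T}_1',T_{2,1})$ (using a unitary extension $U\in\cU_{\bf T}^\cK$ with $\cK$ infinite-dimensional, so that the extension always exists). Since $k_2=1$, the multi-analytic row $\varphi_{U^*}({\bf \Lambda}_1)$ reduces to a single contractive operator $\varphi_{(g_1)}({\bf \Lambda}_1)\in\cR_{n_1}^\infty(\cD_f)\bar\otimes B(\cD_{{\bf T}_1'},\cD_{{\bf T}_1})$ (by Lemma \ref{strong-limit}), and the theorem yields
\begin{equation*}
K_{f,{\bf T}_1'} T_{2,1}^{*}=\varphi_{(g_1)}({\bf \Lambda}_1)^{*}K_{f,{\bf T}_1}.
\end{equation*}
I then define
\begin{equation*}
D:=\|A\|\,\varphi_{(g_1)}({\bf \Lambda}_1)\colon F^2(H_{n_1})\otimes \cD_{{\bf T}_1'}\to F^2(H_{n_1})\otimes \cD_{{\bf T}_1}.
\end{equation*}
Because $D\in\cR_{n_1}^\infty(\cD_f)\bar\otimes B(\cD_{{\bf T}_1'},\cD_{{\bf T}_1})$ is multi-analytic with respect to ${\bf W}_1$, it automatically satisfies $D(W_{1,i}\otimes I_{\cD_{{\bf T}_1'}})=(W_{1,i}\otimes I_{\cD_{{\bf T}_1}})D$.

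The remaining task is to identify $\cH$, $\cH'$ with the co-invariant subspaces $K_{f,{\bf T}_1}(\cH)$, $K_{f,{\bf T}_1'}(\cH')$: purity makes both Poisson kernels isometric, and relation \eqref{ker-inter} confirms the co-invariance under $\{W_{1,i}\otimes I\}$. Rewriting the displayed Poisson identity as $D^{*}K_{f,{\bf T}_1}=K_{f,{\bf T}_1'}A^{*}$ shows that $D^*$ sends $K_{f,{\bf T}_1}(\cH)$ into $K_{f,{\bf T}_1'}(\cH')$ and acts there exactly as $A^*$, so $D^{*}|_{\cH}=A^{*}$ under the identifications. For the norm equality, $\|D\|\leq\|A\|\cdot\|\varphi_{(g_1)}({\bf \Lambda}_1)\|\leq\|A\|$, while $D^{*}K_{f,{\bf T}_1}=K_{f,{\bf T}_1'}A^{*}$ together with $K_{f,{\bf T}_1}$, $K_{f,{\bf T}_1'}$ isometric gives $\|A\|=\|A^{*}\|\leq\|D^{*}\|=\|D\|$; hence $\|D\|=\|A\|$.

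No serious obstacle arises once Theorem \ref{dil} is in hand—the only delicate point is bookkeeping with the identifications via the Poisson kernels, and verifying that the specialization $g(Z')=Z'$ with $n_2=1$ is a legitimate positive regular polynomial so that the intertwining-dilation hypothesis is met. Purity of ${\bf T}_1,{\bf T}_1'$ is used exactly twice: to make $K_{f,{\bf T}_1},K_{f,{\bf T}_1'}$ isometric (the ``${\bf T}_1$ pure'' assumption in Theorem \ref{dil} provides one of these automatically through the convergence of the series in Lemma \ref{series}).
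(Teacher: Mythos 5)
Your proposal is correct and follows essentially the same route as the paper: both specialize Theorem \ref{dil} to $n_2=1$ with $g(Z')=Z'$ and a normalized $A$ as the intertwining contraction, take $D$ to be the resulting contractive multi-analytic operator (rescaled by $\|A\|$), and use purity of both tuples to make the Poisson kernels isometric for the identification $D^*|_{\cH}=A^*$ and the norm equality. The only cosmetic difference is that you rescale $A$ to a contraction rather than assuming $\|A\|=1$ without loss of generality, as the paper does.
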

\begin{proof}

Without loss of generality, we can assume that $\|A\|=1$.
Since $A\in \cI({\bf T}_1', {\bf T}_1)$, we can apply Theorem \ref{dil} in the particular case when $n_2=1$, ${\bf T}_2:=A$, and $g=X$. Consequently, there is a contractive multi-analytic operator $\varphi({\bf \Lambda}_1)\in \cR_{n_1}^\infty(\cD_f)\bar \otimes B(\cD_{{\bf T}_1'},\cD_{{\bf T}_1})$ such that
$K_{f,{\bf T}_1'} A^*=\varphi({\bf \Lambda}_1)^* K_{f,{\bf T}_1}$.
 Since ${\bf T}_1$ and ${\bf T}_1'$ are pure elements, the noncommutative Poisson kernels $K_{f,{\bf T}_1}$ and $K_{f,{\bf T}_1'}$ are isometries.
 Under the identifications of $\cH$  and $\cH'$ with $K_{f,{\bf T}_1}\cH$ and $K_{f,{\bf T}'_1}\cH'$, respectively,  we have
$A^*=\varphi({\bf \Lambda}_1)^*|_\cH$.
Since $1=\|A^*\|\leq \|\varphi({\bf \Lambda}_1)^*\|\leq 1$, we deduce that $\|A\|=\|\varphi({\bf \Lambda}_1)\|$.
Since $D:=\varphi({\bf \Lambda}_1)$ intertwines $W_{1,i}\otimes I_{\cD_{{\bf T}_1'}}$ with
$W_{1,i}\otimes I_{\cD_{{\bf T}_1}}$ for each $i\in \{1,\ldots, n_1\}$, the proof is complete.
\end{proof}

More  applications of Theorem \ref{dil} will be considered in the next sections.

\bigskip

\section{Noncommutative varieties, dilations,  and Schur representations}

In this section, we obtain an  intertwining dilation theorem on noncommutative varieties in regular domains  and   a  Schur  type representation   for the  unit ball of $\cR_{n}^\infty(\cV_J)\bar \otimes B(\cH', \cH)$.

First, we  recall from \cite{Po-varieties} and \cite{Po-domains} basic facts concerning  noncommutative  varieties generated by $WOT$-closed  two-sided ideals of  the Hardy algebra $F^\infty_n(\cD_f)$, their universal models,  and the Hardy algebras they generate.
Let
  $J$ be a $WOT$-closed  two-sided ideal of $F^\infty_n(\cD_f)$ such that $J\neq F^\infty_n(\cD_f)$. We introduce the noncommutative variety $\cV_{J}(\cH)$ to be the set of all   pure $n$-tuples
${\bf T}:=(T_1,\ldots, T_n)\in \cD_f(\cH)$    with the property that
  $$
\varphi(T_1,\ldots, T_n)=0\quad \text{for any } \ \varphi\in J,
$$
where $\varphi(T_1,\ldots, T_n)$ is defined   using  the $F_n^\infty(\cD_f)$-functional calculus for pure elements in $\cD_f(\cH)$.
Define the subspaces of
$F^2(H_n)$ by
$$
\cM_J:=\overline{JF^2(H_n)}\quad \text{and}\quad
\cN_J:=F^2(H_n)\ominus \cM_J.
$$
The subspace $\cN_J$  is  invariant under
the operators
 $W_1^*,\ldots, W_n^*$ and  $\Lambda_1^*,\ldots, \Lambda_n^*$, and    $\cN_J\neq 0$ if and only if $ J\neq
F_n^\infty(\cD_f)$.
Define the {\it constrained  weighted left} (resp.~{\it right}) {\it
creation operators} associated with the noncommutative variety
$\cV_{J}$ by setting
$$B_i:=P_{\cN_J} W_i|_{\cN_J} \quad \text{and}\quad C_i:=P_{\cN_J} \Lambda_i|_{\cN_J},\quad i=1,\ldots, n.
$$
We remark that
 ${\bf B}:=(B_1,\ldots, B_n)$ is in  $\cV_{J}(\cN_J)$ and  plays the role of
universal model for  the noncommutative variety $\cV_{J}$. We will refer to the $n$-tuples ${\bf B}:=(B_1,\ldots, B_n)$ and ${\bf C}:=(C_1,\ldots, C_n)$ as the constrained weighted creation operators associated with $\cV_J$. Note that if $J=\{0\}$, then $\cN_{\{0\}}=F^2(H_n)$ and  $\cV_{\{0\}}(\cH)$ is the set of all pure elements of $\cD_f(\cH)$.

Let $F_n^\infty(\cV_{J})$ be the $WOT$-closed algebra generated by
$B_1,\ldots, B_n$ and the identity and let $R_n^\infty(\cV_{J})$ be the $WOT$-closed
algebra generated by $C_1,\ldots, C_n$ and the identity. We proved  in \cite{Po-domains} that
\begin{equation*}
F_n^\infty(\cV_{J})^\prime=R_n^\infty(\cV_{J})\ \text{ and } \
R_n^\infty(\cV_{J})^\prime=F_n^\infty(\cV_{J}),
\end{equation*}
where $^\prime$ stands for the commutant. An operator $M\in
B(\cN_J\otimes \cK,\cN_J\otimes \cK')$ is called multi-analytic with
respect to the  universal model  ${\bf B}:=(B_1,\ldots, B_n)$ if
$
M(B_i\otimes I_{\cK})=(B_i\otimes I_{\cK'})M$ for $i=1,\ldots, n.
$
  We recall
that the set of all multi-analytic operators with respect to
 ${\bf B}$ coincides  with
$$
R_n^\infty(\cV_{J})\bar\otimes B(\cK,\cK')=P_{\cN_J\otimes
\cK'}[R_n^\infty(\cD_f)\bar\otimes B(\cK,\cK')]|_{\cN_J\otimes \cK}.
$$
A similar result holds for  the  Hardy algebra $F_n^\infty(\cV_{J})$.
Given a noncommutative variety $\cV_J(\cH)$ and
${\bf T}\in \cV_J(\cH)$, we define
the {\it constrained Poisson kernel} $K_{J,{\bf T}}:\cH\to \cN_J\otimes \cD_{\bf T}$  by
$$K_{J,{\bf T}}:=(P_{\cN_J}\otimes I_{\cD_{\bf T}})K_{J,{\bf T}}.
$$
We recall that $K_{J,{\bf T}}$
is an isometry and satisfies the relation
\begin{equation} \label{KJK}
K_{J,{\bf T}}T_\alpha^*=(B_\alpha^*\otimes I_{\cD_{\bf T}})K_{J,{\bf T}},\quad  \alpha\in \FF_n^+.
\end{equation}

We remark that as  a consequence of Theorem \ref{CLT}, we  deduce  the commutant lifting theorem for the elements of the  noncommutative varieties $\cV_J$  (\cite{Po-domains}).
More precisely, we can obtain the following result.

Let $J\neq F_{n_1}^\infty(\cD_f)$ be a WOT-closed two-sided ideal of  the noncommutative Hardy algebra $F_{n_1}^\infty(\cD_f)$  and let ${\bf B}_1:=(B_{1,1},\ldots, B_{1,n_1})$  and ${\bf C}_1:=(C_{1,1},\ldots, C_{1,n_1})$ be the corresponding constrained shifts acting
on $\cN_J$.  For each $j=1,2$, let $\cK_j$ be a Hilbert space and $\cE_j\subseteq \cN_J\otimes \cK_j$ be a co-invariant subspace  under each operator $B_{1,i}\otimes I_{\cK_j}$, \ $i=1,\ldots, n_1$.
If $X:\cE_1\to \cE_2$ is a bounded operator such that
\begin{equation*}
X[P_{\cE_1}(B_{1,i}\otimes I_{\cK_1})|_{\cE_1}]=[P_{\cE_2}(B_{1,i}\otimes I_{\cK_2})]|_{\cE_2}X,\quad i=1,\ldots,n_1,
\end{equation*}
then there exists
$G({\bf C}_1)\in \cR_{n_1}^\infty(\cV_J)\bar\otimes B(\cK_1,\cK_2)$
such that
$$
G({\bf C}_1)^*|_{\cE_2}=X^* \quad \text{ and }\quad \|G({\bf C}_1)\|=\|X\|.
$$

The analogue of Theorem \ref{dil} on noncommutative varieties $\cV_J(\cH)$  in the domain $\cD_f(\cH)$ is the following. Recall that $\cU_{\bf T}^\cK$ is the set of all unitary extensions of the isometry defined by  relation \eqref{iso2}.

\begin{theorem} \label{dil-com}  Let
  ${\bf T}_1:=(T_{1,1},\ldots, T_{1,n_1})$ and  ${\bf T}_1':=(T_{1,1}',\ldots, T_{1,n_1}')$ be elements of the noncommutative varieties $\cV_J(\cH)$ and $\cV_J(\cH')$, respectively,   and let ${\bf T}_2:=(T_{2,1},\ldots, T_{2,n_2})\in \cD_g(\cH', \cH)$ be  such that ${\bf T}_2\in \cI({\bf T}_1,{\bf T}_1')$.
Let ${\bf B}_1:=(B_{1,1},\ldots, B_{1,n_1})$ and ${\bf C}_1:=(C_{1,1},\ldots, C_{1, n_1})$ be the weighted creation operators associated with the noncommutative variety $\cV_J$.
If
$$\varphi_{U^*}({\bf \Lambda}_1)=[\varphi_{(\beta)}({\bf \Lambda}_1): \ \beta\in \FF_{n_2}^+, 1\leq |\beta|\leq k_2 ]
 $$
 is the contractive multi-analytic operator associated with $U\in \cU_{\bf T}^\cK$ and
   ${\bf T}_1$ is  pure in $\cD_f(\cH)$,   then the following relations hold:
$$
K_{J,{\bf T}_1'} T_{2,\beta}^* =\frac{1}{\sqrt{c_{\beta}}}\varphi_{(\beta)} ({\bf C}_1)^*K_{J,{\bf T}_1}, \qquad \beta\in \FF_{n_2}^+, 1\leq |\beta|\leq k_2,
$$
$$
K_{J,{\bf T}_1}T_{1,i}^*=
\left(B_{1,i}^*\otimes I_{\cD_{{\bf T}_1}}\right)  K_{J,{\bf T}_1},\quad
K_{J,{\bf T}_1'}(T_{1,i}')^*=
\left(B_{1,i}^*\otimes I_{\cD_{{\bf T}_1'}}\right)  K_{J,{\bf T}_1'}, \qquad i\in \{1,\ldots, n_1\},
$$
 where  $K_{J,{\bf T}_1}$ and $K_{J,{\bf T}_1'}$ are the  constrained  Poisson kernels associated with ${\bf T}_1$ and ${\bf T}_1'$, respectively.

\end{theorem}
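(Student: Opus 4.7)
My plan is to reduce Theorem \ref{dil-com} to the unconstrained Theorem \ref{dil} by exploiting how the variety $\cV_J$ interacts with the full Poisson kernel machinery. Since ${\bf T}_1\in\cV_J(\cH)\subseteq\cD_f(\cH)$ and ${\bf T}_1'\in\cV_J(\cH')\subseteq\cD_f(\cH')$ are pure in $\cD_f$, and ${\bf T}_2\in\cI({\bf T}_1,{\bf T}_1')\cap\cD_g(\cH',\cH)$, Theorem \ref{dil} applies directly and yields, for each $U\in\cU_{\bf T}^\cK$,
\begin{equation*}
K_{f,{\bf T}_1'}T_{2,\beta}^*=\frac{1}{\sqrt{c_\beta}}\varphi_{(\beta)}({\bf\Lambda}_1)^*K_{f,{\bf T}_1},\qquad\beta\in\FF_{n_2}^+,\ 1\leq|\beta|\leq k_2,
\end{equation*}
together with the intertwining relations for $W_{1,i}\otimes I$.

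The first key observation I would establish is that when ${\bf T}\in\cV_J(\cH)$ the full Poisson kernel $K_{f,{\bf T}}$ actually lands in the constrained subspace $\cN_J\otimes\cD_{\bf T}$. This uses the $F_{n_1}^\infty(\cD_f)$-functional calculus for pure tuples: for any $\varphi\in J$, the identity $\varphi({\bf T})=0$ translates, via the Fourier expansion of elements of $\cM_J=\overline{JF^2(H_{n_1})}$ and the definition $K_{f,{\bf T}}h=\sum_\alpha\sqrt{b_\alpha}e_\alpha\otimes\Delta_{f,{\bf T}}T_\alpha^*h$, into the orthogonality of $K_{f,{\bf T}}h$ to $\cM_J\otimes\cD_{\bf T}$. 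Hence $K_{f,{\bf T}}$ coincides with its compression $K_{J,{\bf T}}=(P_{\cN_J}\otimes I_{\cD_{\bf T}})K_{f,{\bf T}}$ when regarded as an operator into $\cN_J\otimes\cD_{\bf T}$. Apply this to both ${\bf T}_1$ and ${\bf T}_1'$.

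The second key observation is that $\cN_J\otimes\cH$ is invariant under $\varphi_{(\beta)}({\bf\Lambda}_1)^*$, and the restriction equals $\varphi_{(\beta)}({\bf C}_1)^*$. This follows from the stated fact that $\cN_J$ is invariant under $\Lambda_{1,i}^*$, so that $\cM_J\otimes\cH'$ is invariant under each $\Lambda_{1,i}\otimes I$, hence (by the Fourier representation and passage to the SOT-limit) under every element of $\cR_{n_1}^\infty(\cD_f)\bar\otimes B(\cH',\cH)$; taking adjoints gives the claim, and the identification of the compression with $\varphi_{(\beta)}({\bf C}_1)$ uses the standard description $R_{n_1}^\infty(\cV_J)\bar\otimes B(\cdot,\cdot)=P_{\cN_J\otimes\cdot}[R_{n_1}^\infty(\cD_f)\bar\otimes B(\cdot,\cdot)]|_{\cN_J\otimes\cdot}$ recalled in the preliminaries.

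Combining the two observations, the unconstrained identity from Theorem \ref{dil} restricts cleanly to the constrained identity claimed in the theorem, since both sides already take values in $\cN_J\otimes\cD_{{\bf T}_1'}$ and both operators agree there with their constrained counterparts. The last two intertwining relations involving $B_{1,i}$ are then immediate from relation \eqref{KJK} applied to ${\bf T}_1$ and ${\bf T}_1'$ separately. I expect the main obstacle to be the bookkeeping in the first key observation, specifically making precise that $\text{Ran}\,K_{f,{\bf T}}\perp\cM_J\otimes\cD_{\bf T}$ for ${\bf T}\in\cV_J$; everything else is a clean functorial transfer from $\cD_f$ to $\cV_J$.
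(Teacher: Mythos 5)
Your proposal is correct and follows essentially the same route as the paper: apply Theorem \ref{dil} to get the unconstrained intertwining identity, note that the Poisson kernels of elements of $\cV_J$ have range in $\cN_J\otimes\cD_{\bf T}$, use the co-invariance of $\cN_J$ under the $\Lambda_{1,i}$ to compress $\varphi_{(\beta)}({\bf \Lambda}_1)^*$ to $\varphi_{(\beta)}({\bf C}_1)^*$, and finish with relation \eqref{KJK}. The only difference is that you spell out the justification of the range condition on $K_{f,{\bf T}}$, which the paper simply asserts.
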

\begin{proof} Since ${\bf T}_1\in \cV_J(\cH)$ and ${\bf T}_1'\in \cV_J(\cH')$,
the noncommutative Poisson kernels $K_{f,{\bf T}_1}$ and $K_{f,{\bf T}_1'}$ have ranges in $\cN_J\otimes  \cD_{{\bf T}_1}$ and $\cN_J\otimes  \cD_{{\bf T}_1'}$, respectively.
Due to Theorem \ref{dil}, we have
\begin{equation}\label{INT}
K_{f,{\bf T}_1'} T_{2,\beta}^* =\frac{1}{\sqrt{c_{\beta}}}\varphi_{(\beta)} ({\bf \Lambda}_1)^*K_{f,{\bf T}_1}, \qquad \beta\in \FF_{n_2}^+, 1\leq |\beta|\leq k_2.
\end{equation}
Since $\cN_J$ is co-invariant under $\Lambda_{1,1},\ldots, \Lambda_{1, n_1}$, we have
$$
\varphi_{(\beta)}({\bf \Lambda}_1)^*(\cN_J\otimes  \cD_{{\bf T}_1})\subset \cN_J\otimes  \cD_{{\bf T}_1'},  \qquad \beta\in \FF_{n_2}^+, 1\leq |\beta|\leq k_2,
$$
and
$$
 \varphi_{(\beta)}({\bf C}_1)=P_{\cN_J\otimes  \cD_{{\bf T}_1}} \varphi_j({\bf \Lambda}_1)|_{\cN_J\otimes  \cD_{{\bf T}_1'}}, \qquad  \qquad \beta\in \FF_{n_2}^+, 1\leq |\beta|\leq k_2,
 $$
 is a multi-analytic operator with respect to the universal model ${\bf B}$. Note that relation \eqref{INT} implies
 $$
 P_{\cN_J\otimes  \cD_{{\bf T}_1'}}K_{f,{\bf T}_1'} T_{2,\beta}^* =\frac{1}{\sqrt{c_{\beta}}}P_{\cN_J\otimes  \cD_{{\bf T}_1'}}\varphi_{(\beta)}({\bf \Lambda}_1)^*P_{\cN_J\otimes  \cD_{{\bf T}_1}}K_{f,{\bf T}_1},   \qquad \beta\in \FF_{n_2}^+, 1\leq |\beta|\leq k_2,
 $$
 which  proves that
 $$
K_{J,{\bf T}_1'} T_{2,\beta}^* =\frac{1}{\sqrt{c_{\beta}}}\varphi_{(\beta)} ({\bf C}_1)^*K_{J,{\bf T}_1}, \qquad \beta\in \FF_{n_2}^+, 1\leq |\beta|\leq k_2.
$$
 Since the other relations  in the theorem are due to \eqref{KJK}. The proof is complete.
 \end{proof}

 As a consequence of Theorem \ref{dil-com} we obtain the following  Schur \cite{Sc} type representation   for the  unit ball of $\cR_{n_1}^\infty(\cV_J)\bar \otimes B(\cH', \cH)$.

\begin{theorem} \label{transfer}  An operator $\Gamma:\cN_J\otimes \cH'\to \cN_J\otimes \cH$ is in the closed unit ball of $\cR_{n_1}^\infty(\cV_J)\bar \otimes B(\cH', \cH)$   if and only if there is a Hilbert space $\cE$ and a  unitary operator
$$
 \Omega=\left[ \begin{matrix} E&F\\G&H \end{matrix}\right]: \begin{matrix} \cH'\\ \oplus \\ \cE \end{matrix}\to   \begin{matrix} \cH\\ \oplus \\ \bigoplus\limits_{{\alpha\in \FF_{n_1}^+} \atop { 1\leq |\alpha|\leq k_1 }}\cE \end{matrix}
$$
such that $\Gamma=\text{\rm SOT-}\lim_{r\to 1}\varphi_{\Omega}(r{\bf C}_1)$, where

\begin{equation*}
\begin{split}
\varphi_{\Omega}(r{\bf C}_1)&:= I_{\cN_J}\otimes E+\left(I_{\cN_J}\otimes F\right) \left(I_{\cN_J\otimes \cH}-\sum\limits_{{\alpha\in \FF_{n_1}^+}\atop{ 1\leq |\alpha|\leq k_1}}
r^{|\alpha|}\sqrt{a_{ \alpha}} C_{1,\tilde\alpha}\otimes H_{(\alpha)}\right)^{-1}\\
& \qquad \qquad \times \left[\sqrt{a_{  \alpha}} C_{1,\tilde\alpha}\otimes I_\cH:\ \alpha\in \FF_{n_1}^+, 1\leq |\alpha|\leq k_1\right]\left(I_{\cN_J}\otimes G\right),
\end{split}
\end{equation*}
where ${\bf C}_1:=(C_{1,1},\ldots, C_{1,n_1})$ is the tuple of weighted right creation operators on $F^2(H_{n_1})$
 and $H$ has the operator  row matrix representation
 $$
 H=\left[ \begin{matrix} H_{(\alpha)}\\ \vdots\\ \alpha\in \FF_{n_1}^+, 1\leq |\alpha|\leq k_1 \end{matrix} \right]:\cE\to \bigoplus_{{\alpha\in \FF_{n_1}^+},{ 1\leq |\alpha|\leq k_1 }} \cE.
 $$

\end{theorem}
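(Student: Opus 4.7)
The plan is to prove the biconditional by reducing each direction to previously established machinery: the ``if'' direction follows from Lemma \ref{strong-limit} together with the co-invariance of $\cN_J$ under the weighted right creation operators, and the ``only if'' direction follows from Theorem \ref{dil-com}, the intertwining dilation theorem on noncommutative varieties.

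For sufficiency, suppose the unitary $\Omega = \left[\begin{smallmatrix}E & F\\ G & H\end{smallmatrix}\right]$ is given. Set $U := \Omega^*$; this places $U$ in the block form required by Lemma \ref{strong-limit} in the special case where $g(Z_1') := Z_1'$ is used (so $n_2 = k_2 = 1$ and the direct sum on the $\cH'$-side collapses to a single copy of $\cH'$). The lemma then asserts that $\varphi_{U^*}({\bf \Lambda}_1) = \text{SOT-}\lim_{r\to 1}\varphi_{U^*}(r{\bf \Lambda}_1)$ exists and is a contractive multi-analytic operator in $\cR_{n_1}^\infty(\cD_f)\bar\otimes B(\cH',\cH)$. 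Under the coefficient dictionary $E = A^*$, $F = C^*$, $G = B^*$, $H = D^*$, the Neumann series defining $\varphi_{U^*}(r{\bf \Lambda}_1)$ and $\varphi_\Omega(r{\bf \Lambda}_1)$ agree term-by-term. Because $\cN_J$ is co-invariant under each $\Lambda_{1,i}$ and $C_{1,i} = P_{\cN_J}\Lambda_{1,i}|_{\cN_J}$, compression to $\cN_J$ is multiplicative on the algebra generated by the $\Lambda$'s, so $P_{\cN_J\otimes\cH}\varphi_\Omega(r{\bf \Lambda}_1)|_{\cN_J\otimes\cH'} = \varphi_\Omega(r{\bf C}_1)$. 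Passing to the SOT-limit places the result in $\cR_{n_1}^\infty(\cV_J)\bar\otimes B(\cH',\cH)$ with norm at most one.

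For necessity, let $\Gamma \in \cR_{n_1}^\infty(\cV_J)\bar\otimes B(\cH',\cH)$ with $\|\Gamma\|\leq 1$. Apply Theorem \ref{dil-com} with $n_2 := 1$, $g(Z_1') := Z_1'$, ${\bf T}_1 := {\bf B}_1\otimes I_\cH$, ${\bf T}_1' := {\bf B}_1\otimes I_{\cH'}$, and $T_{2,1} := \Gamma$. The tuples ${\bf T}_1$ and ${\bf T}_1'$ are pure elements of $\cV_J$, and since $\Gamma$ commutes with each $B_{1,i}\otimes I$ and satisfies $\Gamma\Gamma^*\leq I$, we have $T_{2,1}\in \cD_g(\cN_J\otimes\cH',\cN_J\otimes\cH)\cap \cI({\bf T}_1,{\bf T}_1')$. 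Theorem \ref{dil-com} then supplies a Hilbert space $\cK$, a unitary extension $U\in \cU_{\bf T}^\cK$, and a contractive multi-analytic operator $\varphi_{(g_1)}({\bf C}_1)$ satisfying $K_{J,{\bf T}_1'}\Gamma^* = \varphi_{(g_1)}({\bf C}_1)^*K_{J,{\bf T}_1}$. Setting $\Omega := U^*$ and $\cE := \cD_{{\bf T}_2}\oplus \cK$ produces a unitary of the prescribed block shape, and a final Neumann-series comparison shows that $\varphi_{U^*}(r{\bf \Lambda}_1)$ compressed to $\cN_J$ is exactly $\varphi_\Omega(r{\bf C}_1)$.

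The main obstacle is upgrading the intertwining identity $K_{J,{\bf T}_1'}\Gamma^* = \varphi_{(g_1)}({\bf C}_1)^*K_{J,{\bf T}_1}$ to the literal equality $\Gamma = \varphi_\Omega({\bf C}_1)$. For ${\bf T}_1 = {\bf B}_1\otimes I_\cH$ one computes $\Delta_{f,{\bf T}_1} = \Delta_{f,{\bf B}_1}\otimes I_\cH$, so the constrained Poisson kernel factors as $K_{J,{\bf T}_1} = K_{J,{\bf B}_1}\otimes I_\cH$ and serves as a canonical isometric intertwiner between $\cN_J\otimes \cH$ and $\cN_J\otimes \cD_{{\bf B}_1}\otimes \cH$; the analogous statement holds for ${\bf T}_1'$. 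Invoking the universal model's self-identification under the Poisson transform (the fact that ${\bf B}_1$ dilated through its own model returns ${\bf B}_1$), these kernels cancel from both sides, leaving $\Gamma = \varphi_\Omega({\bf C}_1)$. The only genuinely delicate point is tracking these canonical identifications; everything else is routine unpacking.
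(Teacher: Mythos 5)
Your proof is correct; the sufficiency half coincides with the paper's (apply Lemma \ref{strong-limit} with $n_2=1$ and $U^*=\Omega$, then compress to $\cN_J$ using co-invariance of $\cN_J$ under the $\Lambda_{1,i}$ and the fact that compression is multiplicative there), but the necessity half takes a genuinely different route. The paper first invokes the commutant lifting theorem for pure elements of $\cD_f$ to lift $\Gamma$ to a contractive multi-analytic operator $\Psi$ on $F^2(H_{n_1})\otimes\cH'$, applies Theorem \ref{dil} with ${\bf T}_1={\bf W}_1\otimes I_{\cH}$, ${\bf T}_1'={\bf W}_1\otimes I_{\cH'}$, ${\bf T}_2=\Psi$, and then uses $I-\sum_{|\beta|\ge 1} a_\beta W_\beta W_\beta^*=P_\CC$ together with $P_\CC W_\beta^*e_\alpha=b_\beta^{-1/2}\delta_{\alpha\beta}$ to conclude that $\cD_{{\bf T}_1}=\cH$ and $K_{f,{\bf T}_1}$ is the identity, whence $\Psi=\varphi_\Omega({\bf \Lambda}_1)$ and $\Gamma=\varphi_\Omega({\bf C}_1)$ after restriction to $\cN_J$. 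You instead apply Theorem \ref{dil-com} once, directly to ${\bf B}_1\otimes I_{\cH}$, ${\bf B}_1\otimes I_{\cH'}$, ${\bf T}_2=\Gamma$; this is legitimate because $\Gamma$ being multi-analytic and contractive is exactly the statement that $\Gamma\in\cI({\bf T}_1,{\bf T}_1')\cap\cD_g$ for $g(Z_1')=Z_1'$, and it works because $I_{\cN_J}-\sum a_\alpha B_{1,\alpha}B_{1,\alpha}^*=P_{\cN_J}P_\CC|_{\cN_J}$ forces $\cD_{{\bf B}_1\otimes I_\cH}\cong\cH$ and makes the constrained Poisson kernel $K_{J,{\bf B}_1\otimes I_\cH}$ the identity on $\cN_J\otimes\cH$ --- the constrained analogue of the computation the paper records only for ${\bf W}_1$, and the one place where you should replace the appeal to ``the universal model's self-identification'' by the two-line verification above. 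What your route buys is the elimination of the explicit commutant-lifting step (it is absorbed into Theorem \ref{dil-com}); what the paper's route buys is that the ``Poisson kernel equals identity'' check is performed on the full Fock space, where the weight formula is available verbatim. In both arguments the coefficient dictionary $E=A^*$, $F=C^*$, $G=B^*$, $H_{(\alpha)}=D_{(\alpha)}^*$, i.e. $\Omega=U^*$, matches the two transfer functions term by term.
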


\begin{proof} Assume that $\Gamma:\cN_J\otimes \cH'\to \cN_J\otimes \cH$ is
a contractive multi-analytic operator  with respect to the universal model ${\bf B}_1:=(B_{1,1},\ldots, B_{1,n_1})$, i.e. $\Gamma(B_{1,i}\otimes I_{\cH'})=(B_{1,i}\otimes I_{\cH})\Gamma$ for any $i\in \{1,\ldots, n_1\}$. Due to the commutant lifting theorem for pure elements in $\cD_f$,   there exists
 a contractive multi-analytic  operator $\Psi:F^2(H_n)\otimes \cH'\to F^2(H_n)\otimes \cH$  with respect to the universal model ${\bf W}_1:=(W_{1,1},\ldots, W_{1,n_1})$, i.e.
 $\Psi(W_{1,i}\otimes I_{\cH'})=(W_{1,i}\otimes I_{\cH})\Psi$ for any $i\in \{1,\ldots, n_1\}$,  such that $\|\Gamma\|=\|\Psi\|$ and $\Psi^*|_{\cN_J\otimes \cH}=\Gamma^*$.

 Set ${\bf T}_1:=(W_{1,1}\otimes I_\cH,\ldots, W_{1,n_1}\otimes I_{\cH})$,
${\bf T}_1':=(W_{1,1}\otimes I_{\cH'},\ldots, W_{1,n_1}\otimes I_{\cH'})$,  $n_2=1$, and
${\bf T}_2:=\Psi$. Since $\Psi\in \cI({\bf T}_1, {\bf T}'_1)$,  Theorem \ref{dil}  and  Lemma \ref{strong-limit} show that there is a unitary operator
$$
 \Omega=\left[ \begin{matrix} E&F\\G&H \end{matrix}\right]: \begin{matrix} \cD_{{\bf T}_1'}\\ \oplus \\ \cE \end{matrix}\to   \begin{matrix} \cD_{{\bf T}_1}\\ \oplus \\ \bigoplus\limits_{{\alpha\in \FF_{n_1}^+}, { 1\leq |\alpha|\leq k_1 }}\cE \end{matrix}
$$
 such that $\varphi_{\Omega}({\bf \Lambda}_1):=\text{\rm SOT-}\lim_{r\to 1}\varphi_{\Omega}(r{\bf \Lambda}_1)$ is a multi-analytic operator in $\cR_{{n}_1}^\infty(\cD_f)\bar\otimes B(\cD_{{\bf T}_1'},\cD_{{\bf T}_1})$, where $\varphi_{\Omega}(r{\bf \Lambda}_1)$ is  defined as in the theorem and such that
$K_{f,{\bf T}_1} \Psi^*=\varphi_{\Omega}({\bf \Lambda}_1)^*K_{f,{\bf T}_1}$.
Due to relation
$I-\sum_{ |\beta|\geq 1} a_\beta W_\beta W_\beta^*=P_\CC$, we deduce that
 $\cD_{{\bf T}_1}=\cH$ and  $\cD_{{\bf T}_1'}=\cH'$.
On the other hand, since
\begin{equation*}
P_\CC W_\beta^* e_\alpha =\begin{cases}
\frac {1}{\sqrt{b_{\beta}}}   & \text{ if } \alpha=\beta\\
0& \text{ otherwise},
\end{cases}
\end{equation*}
one can easily see that
  the noncommutative Poisson kernel $K_{f,{\bf T}_1}$ is the identity on
$F^2(H_{n_1})\otimes \cH$. Consequently, $\Psi=\varphi_{\Omega}({\bf \Lambda}_1)$.
Since $\Psi^*|_{\cN_J\otimes \cH}=\Gamma^*$, we deduce that $\Gamma=\varphi_\Omega({\bf C}_1)$.

 To prove the converse, note that,  in the  particular case when $n_2=1$ and $U^*=\Omega$,   Lemma \ref{strong-limit} shows that
 $\Psi:=\text{\rm SOT-}\lim_{r\to 1}\varphi_{\Omega}(r{\bf \Lambda}_1)$, where
\begin{equation*}
\begin{split}
\varphi_{\Omega}(r{\bf \Lambda}_1):= I_{F^2(H_{n_1})}\otimes E&+\left(I_{F^2(H_{n_1})}\otimes F\right) \left(I_{F^2(H_{n_1})\otimes \cE}-\sum\limits_{{\alpha\in \FF_{n_1}^+}\atop{ 1\leq |\alpha|\leq k_1}}
r^{|\alpha|}\sqrt{a_{ \alpha}} \Lambda_{1,\tilde\alpha}\otimes H_{(\alpha)}\right)^{-1}\\
& \times \left[\sqrt{a_{  \alpha}} \Lambda_{1,\tilde\alpha}\otimes I_\cH:\ \alpha\in \FF_{n_1}^+, 1\leq |\alpha|\leq k_1\right]\left(I_{F^2(H_{n_1})}\otimes G\right)
\end{split}
\end{equation*}
 and $H=\left[ \begin{matrix} H_{(\alpha)}\\ \vdots\\ \alpha\in \FF_{n_1}^+, 1\leq |\alpha|\leq k_1 \end{matrix} \right]:\cE\to \bigoplus_{{\alpha\in \FF_{n_1}^+} \atop { 1\leq |\alpha|\leq k_1 }} \cE$, is a contractive multi-analytic operator with respect to ${\bf W}_1$.
 Since $\cN_J$ is a  co-invariant subspace under $\Lambda_{1,1},\ldots, \Lambda_{1,n_1}$, we deduce that  $\Gamma:=\varphi_\Omega({\bf C}_1)=P_{\cN_J\otimes \cD_{{\bf T}_1'}}\Psi|_{\cN_J\otimes \cD_{{\bf T}_1}}$  is a contractive multi-analytic operator with respect to ${\bf B}_1$.
 The proof is complete.
\end{proof}

\smallskip

\section{And\^ o type dilations and inequalities on  noncommutative bi-domains and varieties}

In this section, we obtain And\^ o type dilations and inequalities  for the elements of the bi-domain ${\bf D}_{(f,g)}$ and a class of noncommutative varieties. The commutative case as well the matrix case  are  also discussed.

We recall that, given a positive regular formal power series
$g=\sum\limits_{{\beta\in \FF_{n_2}^+},{ 1\leq |\beta|\leq k_2}} c_\beta X_\beta$, the noncommutative ellipsoid $\cE_g(\cH)\supseteq \cD_g(\cH)$ is defined  by
$
\cE_g(\cH):=\left\{ {\bf X}:=(X_1,\ldots, X_{n_2}): \  \sum_{|\beta|=1} c_\beta X_\beta X_\beta^*\leq I\right\}.
$

One of the most important consequences of the results from Section 2 is the following And\^ o type dilation for  the bi-domain ${\bf D}_{(f,g)}(\cH):=\cD_f(\cH)\times_c \cD_g(\cH)$, where $f$ and $g$ are positive regular noncommutative polynomials, and for  the noncommutative variety
$${\bf D}_{(f,g)}^J(\cH):=\left\{({\bf T}_1, {\bf T}_2)\in {\bf D}_{(f,g)}(\cH): {\bf T}_1\in \cV_J(\cH)\right\}.
$$
 We recall that $\cU_{\bf T}^\cK$ is the set of all unitary extensions of the isometry defined by relation \eqref{iso2}.  According to Lemma \ref{strong-limit}, for each $U\in \cU_{\bf T}^\cK$,
 the strong operator topology limit
$\varphi_{U^*}({\bf \Lambda}_1):=\text{\rm SOT-}\lim_{r\to 1}\varphi_{U^*}(r{\bf \Lambda_1})
$
 exists and defines a contractive  multi-analytic  operator.

\begin{theorem} \label{dil2}
Let ${\bf T}=({\bf T}_1, {\bf T}_2)\in {\bf D}^J_{(f,g)}(\cH)$ with ${\bf T}_1=(T_{1,1},\ldots, T_{1,n_1})$ and ${\bf T}_2:=(T_{2,1},\ldots, T_{2,n_2})$.     If
 $$\varphi_{U^*}({\bf \Lambda}_1)=(\varphi_{(\beta)}({\bf \Lambda}_1): \ \beta\in \FF_{n_2}^+, 1\leq |\beta|\leq k_2 )
 $$ is the contractive  multi-analytic operator associated with $U\in \cU_{\bf T}^\cK$,
then
$$
K_{J,{\bf T}_1} T_{1,\alpha}^* T_{2,\beta}^*=\left(B_{1,\alpha}^*\otimes I_{\cD_{{\bf T}_1}}\right)\psi_\beta({\bf C}_1)^* K_{J,{\bf T}_1}, \qquad \alpha\in \FF_{n_1}^+, \beta\in \FF_{n_2}^+,
$$
  where
  \begin{enumerate}
  \item[(i)] ${\bf B}_1:=(B_{1,1},\ldots, B_{1,n_1})$ and ${\bf C}_1:=(C_{1,1},\ldots, C_{1,n_1})$ are the constrained creation operators associated with   the variety $\cV_J$;
      \item[(ii)]  $K_{J,{\bf T}_1}$ is  the constrained Poisson kernel  associated $\cV_J$;
          \item[(iii)] $\psi({\bf C}_1):=(\psi_1({\bf C}_1),\ldots, \psi_{n_2}({\bf C}_1))\in \cE_g(\cN_J\otimes \cD_{{\bf T}_1})$, where
          $$\psi_j({\bf C}_1):=\frac{1}{\sqrt{c_{g_j}}}\varphi_{(g_j)}({\bf C}_1), \qquad j\in \{1,\ldots, n_2\}.
          $$
          \end{enumerate}
\end{theorem}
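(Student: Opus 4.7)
The plan is to specialize the intertwining dilation theorem of the previous section (Theorem~\ref{dil-com}) to the case ${\bf T}_1' = {\bf T}_1$ and then iterate on $|\beta|$. The entrywise commutation of ${\bf T}_1$ with ${\bf T}_2$ says precisely that $T_{2,j}T_{1,i} = T_{1,i}T_{2,j}$ for all $i,j$; in the notation of Section~2 this reads ${\bf T}_2\in \cI({\bf T}_1,{\bf T}_1)$. Moreover ${\bf T}_1\in \cV_J(\cH)$ is, by definition, pure in $\cD_f(\cH)$. Hence Theorem~\ref{dil-com}, applied with $\cH'=\cH$ and $\beta = g_j$, produces the base-case identity
\begin{equation*}
K_{J,{\bf T}_1} T_{2,j}^{*} = \frac{1}{\sqrt{c_{g_j}}}\,\varphi_{(g_j)}({\bf C}_1)^{*} K_{J,{\bf T}_1} = \psi_j({\bf C}_1)^{*} K_{J,{\bf T}_1},\qquad j\in\{1,\ldots,n_2\},
\end{equation*}
together with the Poisson-kernel intertwining $K_{J,{\bf T}_1}T_{1,i}^{*} = (B_{1,i}^{*}\otimes I_{\cD_{{\bf T}_1}})K_{J,{\bf T}_1}$ supplied by \eqref{KJK}.

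Next I would extend this to arbitrary words. Write $\beta = g_{j_1}g_{j_2}\cdots g_{j_k}$; then $T_{2,\beta}^{*} = T_{2,j_k}^{*}\cdots T_{2,j_1}^{*}$, and since the entrywise commutation of ${\bf T}_1$ and ${\bf T}_2$ passes to adjoints, $T_{1,\alpha}^{*}T_{2,\beta}^{*} = T_{2,j_k}^{*}\cdots T_{2,j_1}^{*}T_{1,\alpha}^{*}$. Applying the base-case identities successively (first peeling off each $T_{2,j_\ell}^{*}$, then $T_{1,\alpha}^{*}$) gives
\begin{equation*}
K_{J,{\bf T}_1} T_{1,\alpha}^{*} T_{2,\beta}^{*} = \psi_{j_k}({\bf C}_1)^{*}\cdots\psi_{j_1}({\bf C}_1)^{*}\bigl(B_{1,\alpha}^{*}\otimes I_{\cD_{{\bf T}_1}}\bigr)K_{J,{\bf T}_1}.
\end{equation*}
Setting $\psi_\beta({\bf C}_1) := \psi_{j_1}({\bf C}_1)\psi_{j_2}({\bf C}_1)\cdots\psi_{j_k}({\bf C}_1)$ and using that each $\psi_j({\bf C}_1)\in\cR_{n_1}^{\infty}(\cV_J)\bar\otimes B(\cD_{{\bf T}_1})$ lies in the commutant of $F_{n_1}^{\infty}(\cV_J)\bar\otimes I$, the factor $B_{1,\alpha}^{*}\otimes I_{\cD_{{\bf T}_1}}$ may be pulled past $\psi_\beta({\bf C}_1)^{*}$, producing the asserted identity.

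It remains to check (iii), namely $\psi({\bf C}_1)\in \cE_g(\cN_J\otimes \cD_{{\bf T}_1})$, which amounts to
\begin{equation*}
\sum_{j=1}^{n_2} c_{g_j}\,\psi_j({\bf C}_1)\psi_j({\bf C}_1)^{*} = \sum_{j=1}^{n_2} \varphi_{(g_j)}({\bf C}_1)\varphi_{(g_j)}({\bf C}_1)^{*} \leq I.
\end{equation*}
By Lemma~\ref{strong-limit} the row $[\varphi_{(\beta)}({\bf\Lambda}_1):\beta\in\FF_{n_2}^{+},\ 1\leq|\beta|\leq k_2]$ is a contractive multi-analytic operator; compressing to $\cN_J\otimes \cD_{{\bf T}_1}$ preserves row contractivity, and the partial sum over $|\beta|=1$ is dominated by the full sum, giving the required inequality. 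The only real conceptual point is the initial observation that the commutation hypothesis places ${\bf T}_2$ in $\cI({\bf T}_1,{\bf T}_1)$, thereby unlocking Theorem~\ref{dil-com}; everything else is mechanical iteration together with the multi-analyticity of each $\psi_j({\bf C}_1)$ with respect to ${\bf B}_1$.
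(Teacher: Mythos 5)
Your proposal is correct and follows essentially the same route as the paper: the paper's proof likewise specializes Theorem~\ref{dil-com} to the case ${\bf T}_1'={\bf T}_1$ to obtain the identities $K_{J,{\bf T}_1}T_{2,j}^*=\psi_j({\bf C}_1)^*K_{J,{\bf T}_1}$ and $K_{J,{\bf T}_1}T_{1,i}^*=(B_{1,i}^*\otimes I_{\cD_{{\bf T}_1}})K_{J,{\bf T}_1}$, and then declares that the general relation follows. Your write-up merely makes explicit the iteration over the letters of $\beta$, the use of multi-analyticity to commute $B_{1,\alpha}^*\otimes I_{\cD_{{\bf T}_1}}$ past $\psi_\beta({\bf C}_1)^*$, and the row-contractivity argument for item (iii), all of which the paper leaves implicit.
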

\begin{proof} In the particular case when ${\bf T}_1={\bf T}_1'$,  Theorem \ref{dil-com} shows that
$
K_{J,{\bf T}_1} T_{2,j}^* =\psi_j({\bf C}_1)^*K_{J,{\bf T}_1}$ for  $j\in\{1,\ldots, n_2\}
$
and
$
K_{J,{\bf T}_1}T_{1,i}^*=
\left(B_{1,i}^*\otimes I_{\cD_{{\bf T}_1}}\right)  K_{J,{\bf T}_1}$ for  $ i\in \{1,\ldots, n_1\}.
$
Hence, the relation in the theorem follows.
\end{proof}

We remark that Theorem \ref{dil2} provides a model  and a characterization of the elements $({\bf T}_1, {\bf T}_2)\in   \cD_f(\cH)\times_c \cE_g(\cH)$ with ${\bf T}_1\in \cV_J(\cH)$ .
Indeed, if  ${\bf T}=({\bf T}_1, {\bf T}_2)\in B(\cH)^{n_1}\times_c B(\cH)^{n_2}$, then ${\bf T}\in   \cD_f(\cH)\times_c \cE_g(\cH)$ with ${\bf T}_1\in \cV_J(\cH)$ if and only if there is a Hilbert space $\cD$, a multi-analytic operator (with respect to ${\bf B}_1$)
 $$
 \psi({\bf C}_1)=(\psi_1({\bf C}_1),\ldots, \psi_{n_2}({\bf C}_1))\in \cE_g(\cN_J\otimes \cD),
$$
and a co-invariant subspace $\cM\subset \cN_J\otimes \cD$ under each of the operators
$B_{1,i}\otimes I_\cD$ and  $\varphi_j({\bf C}_1)$, where $i\in \{1,\ldots, n_1\}$ and
$j\in \{1,\ldots, n_2\}$, such that $\cM$ can be identified with $\cH$,
$$
(B_{1,i}^*\otimes I_\cD)|_\cH=T_{1,i}^*,\quad \text{and} \quad \varphi_j({\bf C}_1)^*|_\cH=T_{2,j}^*.
$$
Note that the direct implication  is due to Theorem \ref{dil2} under the identification of $\cH$ with $K_{J,{\bf T}_1}\cH$. The converse is obvious.

In what follows, we obtain And\^ o type inequalities  for the bi-domain ${\bf D}_{(f,g)}(\cH)$ and the noncommutative variety
${\bf D}_{(f,g)}^J(\cH)$.
 First, we consider the case when ${\bf T}_1=(T_{1,1},\ldots, T_{1,n_1})$ and ${\bf T}_2:=(T_{2,1},\ldots, T_{2,n_2})$  have  the property that ${\bf T}=({\bf T}_1, {\bf T}_2)\in {\bf D}_{(f,g)}^J(\cH)$ with
$d_i:=\dim \cD_{{\bf T}_i}<\infty$   and
$
  d_1+m_1d_2=m_2 d_1+d_2,
  $
  where
  $$
  m_i:=\text{\rm card} \{\alpha\in \FF_{n_j}^+: \ 1\leq|\alpha|\leq k_j\},\qquad j=1,2.
  $$
  The set $\cU_{\bf T}$  consists  of  unitary extensions
   $U:\cD_{{\bf T}_1}\oplus \bigoplus_{\alpha\in \FF_{n_1}^+, 1\leq |\alpha|\leq k_1}\cD_{{\bf T}_2}\to  \bigoplus_{\beta\in \FF_{n_2}^+, 1\leq |\beta|\leq k_2}\cD_{{\bf T}_1}\oplus \cD_{{\bf T}_2}
$ of the isometry
\begin{equation}\label{UU}
U\left(\Delta_{{\bf T}_1}h\oplus \bigoplus_{{\alpha\in \FF_{n_1}^+, 1\leq |\alpha|\leq k_1}} \sqrt{a_\alpha}\Delta_{{\bf T}_2}T_{1,\alpha}^*h \right):=
\bigoplus_{\beta\in \FF_{n_2}^+, 1\leq |\beta|\leq k_2}\sqrt{c_\beta}\Delta_{{\bf T}_1}T_{2,\beta}^*h ,\qquad h\in \cH.
\end{equation}
Let ${\bf Z}:=\left< Z_1,\ldots, Z_{n_1}\right>$ and
${\bf Z}':=\left< Z_1',\ldots, Z_{n_2}'\right>$  be noncommutative indeterminates and assume that
 $Z_iZ_j'=Z_j'Z_i$ for any $i\in \{1,\ldots, n_1\}$ and $j\in \{1,\ldots, n_2\}$. We denote by  $\CC\left<{\bf Z}, {\bf Z}'\right>$  the complex algebra of all  polynomials in  indeterminates    $Z_{1},\ldots, Z_{n_1}$ and $Z_{1}',\ldots, Z_{n_2}'$. Note that when $n_1=n_2=1$, then $\CC\left<{\bf Z}, {\bf Z}'\right>$ coincides with the algebra $\CC[z,w]$ of complex polynomials in two variable.

\begin{theorem}\label{ando}
Let ${\bf T}=({\bf T}_1, {\bf T}_2)\in {\bf D}_{(f,g)}^J(\cH)$  with  ${\bf T}_1=(T_{1,1},\ldots, T_{1,n_1})$ and ${\bf T}_2:=(T_{2,1},\ldots, T_{2,n_2})$   such that
$$d_i:=\dim \cD_{{\bf T}_i}<\infty\ \text{ and } \ d_1+m_1d_2=d_2+m_2d_1,
$$
and  let ${\bf B}_1:=(B_{1,1},\ldots, B_{1,n_1})$ and ${\bf C}_1:=(C_{1,1},\ldots, C_{1,n_1})$ be the constrained weighted  creation operators associated with   the   noncommutative variety $\cV_J$.
If  $U\in \cU_{\bf T}$,   then
$$
\|[p_{rs}({\bf T}_1,{\bf T}_2)]_{k}\|\leq   \|[p_{rs}({\bf B}_1\otimes I_{\CC^{d_1}}, \psi ({\bf  C}_1))]_{k}\|, \qquad [p_{rs}]_{ k}\in M_k(\CC\left<{\bf Z}, {\bf Z}'\right>), k\in \NN,
$$
where  $\psi({\bf C}_1)=(\psi_1({\bf C}_1),\ldots, \psi_{n_2}({\bf C}_1))$ is  uniquely determined by $U$ as in Theorem \ref{dil2} and each $\psi_j({\bf C}_1) $ is a $d_1\times d_1$-matrix with entries in the Hardy algebra
 $\cR_{n_1}^\infty(\cV_J)$.
 \end{theorem}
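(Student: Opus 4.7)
The plan is to deduce the inequality directly from the intertwining dilation Theorem \ref{dil2}, using that the constrained Poisson kernel $K_{J,{\bf T}_1}$ is an isometry whenever ${\bf T}_1$ is pure (in particular when ${\bf T}_1\in \cV_J(\cH)$), so that operator norms on $\cH$ are dominated by the corresponding norms on the dilation space.

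First I would record the two intertwining identities supplied by Theorem \ref{dil2}:
\begin{equation*}
K_{J,{\bf T}_1} T_{1,i}^* = (B_{1,i}^*\otimes I_{\cD_{{\bf T}_1}}) K_{J,{\bf T}_1}, \qquad
K_{J,{\bf T}_1} T_{2,j}^* = \psi_j({\bf C}_1)^* K_{J,{\bf T}_1},
\end{equation*}
for $i\in\{1,\ldots,n_1\}$ and $j\in\{1,\ldots,n_2\}$. Since under our hypothesis $\cU_{\bf T}\neq\emptyset$ and the dilation setup uses $\cH=\cH'$, the tuple $\psi({\bf C}_1)=(\psi_1({\bf C}_1),\ldots,\psi_{n_2}({\bf C}_1))$ belongs to $\cE_g(\cN_J\otimes \cD_{{\bf T}_1})$, and each $\psi_j({\bf C}_1)$, being a multi-analytic operator with respect to the universal model ${\bf B}_1$, commutes with every $B_{1,i}\otimes I_{\cD_{{\bf T}_1}}$. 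Because the entries of ${\bf T}_1$ commute with those of ${\bf T}_2$, every monomial in $\CC\langle {\bf Z},{\bf Z}'\rangle$ can be written in normal form $Z_\alpha Z'_\beta$, and the matching commutation between $B_{1,i}\otimes I$ and $\psi_j({\bf C}_1)$ shows that $p({\bf B}_1\otimes I_{\cD_{{\bf T}_1}}, \psi({\bf C}_1))$ is well defined for every $p\in \CC\langle {\bf Z},{\bf Z}'\rangle$.

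Next, I would iterate the two intertwining relations on monomials. For $p=\sum a_{\alpha,\beta} Z_\alpha Z_\beta'$, a direct calculation using $\smash{T_{2,\beta}^* T_{1,\alpha}^* = (T_{1,\alpha}T_{2,\beta})^*}$ and the commutation of $B_{1,i}\otimes I$ with $\psi_j({\bf C}_1)$ yields
\begin{equation*}
K_{J,{\bf T}_1}\, p({\bf T}_1,{\bf T}_2)^* = p({\bf B}_1\otimes I_{\cD_{{\bf T}_1}}, \psi({\bf C}_1))^*\, K_{J,{\bf T}_1}.
\end{equation*}
Ampliating by $I_{\CC^k}$ (both the Poisson kernel and the polynomial matrix), the same identity holds with $[p_{rs}]_k\in M_k(\CC\langle {\bf Z},{\bf Z}'\rangle)$ on each side and $K_{J,{\bf T}_1}\otimes I_{\CC^k}$ in place of the kernel. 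Since $K_{J,{\bf T}_1}$ is an isometry (because ${\bf T}_1$ is pure in $\cD_f(\cH)$), so is $K_{J,{\bf T}_1}\otimes I_{\CC^k}$, which gives
\begin{equation*}
\|[p_{rs}({\bf T}_1,{\bf T}_2)]_k\| = \|[p_{rs}({\bf T}_1,{\bf T}_2)]_k^*\|
\le \|[p_{rs}({\bf B}_1\otimes I_{\cD_{{\bf T}_1}}, \psi({\bf C}_1))]_k^*\|
= \|[p_{rs}({\bf B}_1\otimes I_{\cD_{{\bf T}_1}}, \psi({\bf C}_1))]_k\|.
\end{equation*}
Finally, under the dimensional assumption $d_1=\dim\cD_{{\bf T}_1}<\infty$, we identify $\cD_{{\bf T}_1}$ with $\CC^{d_1}$, so ${\bf B}_1\otimes I_{\cD_{{\bf T}_1}}={\bf B}_1\otimes I_{\CC^{d_1}}$ and the entries of $\psi({\bf C}_1)$ become $d_1\times d_1$ matrices with entries in $\cR_{n_1}^\infty(\cV_J)$ (by Lemma \ref{strong-limit}, since $\varphi_{(g_j)}({\bf C}_1)$ lies in $\cR_{n_1}^\infty(\cV_J)\bar\otimes B(\cD_{{\bf T}_1})$). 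This finishes the inequality in the stated form.

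The only delicate point is the step producing the monomial intertwining identity: one must check that, after reducing an arbitrary word in $\CC\langle {\bf Z},{\bf Z}'\rangle$ to the normal form $Z_\alpha Z_\beta'$ (allowed thanks to the commutation $Z_iZ_j'=Z_j'Z_i$), the corresponding product $(B_{1,\alpha}\otimes I)\psi_\beta({\bf C}_1)$ is well defined and coincides with the image of the monomial; this is where the multi-analytic commutation $[B_{1,i}\otimes I,\psi_j({\bf C}_1)]=0$ is essential. Everything else is a direct combination of the isometric property of the Poisson kernel and the dilation identities already proved in Section~2 and Section~3.
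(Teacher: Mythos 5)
Your proposal is correct and follows essentially the same route as the paper: apply the intertwining identity of Theorem \ref{dil2} (via Theorem \ref{dil-com} with ${\bf T}_1={\bf T}_1'$), extend it from monomials $Z_\alpha Z_\beta'$ to arbitrary polynomials, and conclude by using that the constrained Poisson kernel $K_{J,{\bf T}_1}$ is an isometry, so that $[p_{rs}({\bf T}_1,{\bf T}_2)]_k$ is the compression of $[p_{rs}({\bf B}_1\otimes I_{\CC^{d_1}},\psi({\bf C}_1))]_k$ to a co-invariant subspace. Your extra care in checking that $\cU_{\bf T}\neq\emptyset$ under the dimensional hypothesis and that the normal-form reduction is legitimized by the multi-analytic commutation $[B_{1,i}\otimes I,\psi_j({\bf C}_1)]=0$ matches what the paper leaves implicit.
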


 \begin{proof} Since $d_1+m_1d_2=d_2+m_2d_1$, the set $\cU_{\bf T}$ of all unitary extensions of the isometry $U$ defined by relation \eqref{UU} is non-empty. Fix any $U\in\cU_{\bf T}$ and apply Theorem \ref{dil2} to ${\bf T}=({\bf T}_1, {\bf T}_2)\in {\bf D}_{(f,g)}^J(\cH)$  and $U\in \cU_{\bf T}$.
  Then  we deduce that
 $$
K_{J,{\bf T}_1} T_{1,\alpha}^* T_{2,\beta}^*=\left(B_{1,\alpha}^*\otimes I_{\cD_{{\bf T}_1}}\right)\psi_\beta({\bf C}_1)^* K_{J,{\bf T}_1}
$$
for any $\alpha\in \FF_{n_1}^+$ and $\beta\in \FF_{n_2}^+$. Consequently, if $p$ is any polynomial in $\CC\left<{\bf Z}, {\bf Z}'\right>$, we obtain
$$
K_{J,{\bf T}_1} p({\bf T}_1,{\bf T}_2)=p({\bf B}_1\otimes I_{\CC^{d_1}}, \psi ({\bf  C}_1))K_{J,{\bf T}_1}.
$$
Since ${\bf T}_1\in \cV_J(\cH)$, the noncommutative Poisson kernel $K_{J,{\bf T}_1}$ is an isometry, which implies
$$
 p({\bf T}_1,{\bf T}_2)=K_{J,{\bf T}_1}^*p({\bf B}_1\otimes I_{\CC^{d_1}}, \psi ({\bf  C}_1))K_{J,{\bf T}_1}.
$$
Now, it is clear that
$$
\|[p_{rs}({\bf T}_1,{\bf T}_2)]_{k\times k}\|\leq   \|[p_{rs}({\bf B}_1\otimes I_{\CC^{d_1}}, \psi ({\bf  C}_1))]_{k\times k}\|, \qquad [p_{rs}]_{k\times k}\in M_k(\CC\left<{\bf Z}, {\bf Z}'\right>), k\in \NN.
$$
 The proof is complete.
 \end{proof}

Denote by $\cQ_{\bf n}^*$ the set of all formal polynomials of the form
$q({\bf Z}, {\bf Z}')=\sum a_{\alpha,\beta, \gamma, \sigma} Z_\alpha Z_\beta' (Z_\sigma')^*Z_\gamma^* $, with complex coefficients, where ${\bf Z}:=\left< Z_1,\ldots, Z_{n_1}\right>$ and
${\bf Z}':=\left< Z_1',\ldots, Z_{n_2}'\right>$. In what follows, we show that
if we drop the conditions $d_i:=\dim \cD_{{\bf T}_i}<\infty$ and $d_1+m_1d_2=d_2+m_2d_1$, in Theorem \ref{ando},  we can obtain the following And\^ o type inequality.

\begin{theorem} \label{ando1} Let ${\bf T}=({\bf T}_1, {\bf T}_2)\in {\bf D}_{(f,g)}^J(\cH)$  with  ${\bf T}_1=(T_{1,1},\ldots, T_{1,n_1})$ and ${\bf T}_2:=(T_{2,1},\ldots, T_{2,n_2})$.
      If $U\in \cU_{{\bf T}}^\cK$, then
$$
\|[q_{rs}({\bf T}_1,{\bf T}_2)]_{k}\|\leq   \|[q_{rs}({\bf V}_1,{\bf V}_2)]_{k}\|, \qquad [p_{rs}]_{k\times k}\in M_k(\cQ_{\bf n}^*),
$$
where ${\bf V}_1:={\bf B}_1\otimes I_{\cD_{{\bf T}_1}}$ and  ${\bf V}_2:= \psi({\bf  C}_1)$ is  uniquely determined by $U$ as in Theorem \ref{dil2}.
\end{theorem}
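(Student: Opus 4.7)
The plan is to derive the inequality from the intertwining relations of Theorem \ref{dil2} together with the fact that the constrained Poisson kernel $K_{J,{\bf T}_1}$ is an isometry and that the universal tuples ${\bf V}_1={\bf B}_1\otimes I_{\cD_{{\bf T}_1}}$ and ${\bf V}_2=\psi({\bf C}_1)$ commute entrywise, since ${\bf B}_1$ and ${\bf C}_1$ act as ``left'' and ``right'' constrained creation operators on $\cN_J$.

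First, I would fix $U\in \cU_{\bf T}^\cK$ and invoke Theorem \ref{dil2} to obtain, for every $\alpha\in \FF_{n_1}^+$ and $\beta\in \FF_{n_2}^+$, the identity
$$
K_{J,{\bf T}_1}\, T_{1,\alpha}^*T_{2,\beta}^* \;=\; \bigl(V_{1,\alpha}^*V_{2,\beta}^*\bigr)K_{J,{\bf T}_1}.
$$
Since ${\bf T}_1$ and ${\bf T}_2$ commute entrywise, the left-hand side is unchanged if we rewrite $T_{1,\alpha}^*T_{2,\beta}^*=T_{2,\beta}^*T_{1,\alpha}^*$. Thus, evaluated at any $h\in\cH$,
$$
K_{J,{\bf T}_1}\, T_{2,\beta}^*T_{1,\alpha}^*h \;=\; V_{1,\alpha}^*V_{2,\beta}^*\, K_{J,{\bf T}_1}h.
$$

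Next, for a typical element $q({\bf Z},{\bf Z}')=\sum a_{\alpha,\beta,\gamma,\sigma}Z_\alpha Z_\beta'(Z_\sigma')^*Z_\gamma^*\in \cQ_{\bf n}^*$, I would compute matrix coefficients $\langle q({\bf T}_1,{\bf T}_2)h,h'\rangle$ by shifting $T_{2,\sigma}^*T_{1,\gamma}^*$ past the isometry $K_{J,{\bf T}_1}$ on the right and shifting $T_{2,\beta}^*T_{1,\alpha}^*$ past $K_{J,{\bf T}_1}$ on the left (this second step comes from pairing $\langle \cdot ,T_{2,\beta}^*T_{1,\alpha}^*h'\rangle$). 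Applying the intertwining identity twice yields
$$
\langle q({\bf T}_1,{\bf T}_2)h,h'\rangle \;=\; \sum a_{\alpha,\beta,\gamma,\sigma}\,\langle V_{2,\beta}V_{1,\alpha}V_{1,\gamma}^*V_{2,\sigma}^*\,K_{J,{\bf T}_1}h,\,K_{J,{\bf T}_1}h'\rangle.
$$
Using the entrywise commutation of ${\bf V}_1$ and ${\bf V}_2$ (hence also of ${\bf V}_1^*$ and ${\bf V}_2^*$) allows me to reorder $V_{2,\beta}V_{1,\alpha}\to V_{1,\alpha}V_{2,\beta}$ and $V_{1,\gamma}^*V_{2,\sigma}^*\to V_{2,\sigma}^*V_{1,\gamma}^*$, matching the normal-ordered shape of $q$. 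This gives the key identity
$$
q({\bf T}_1,{\bf T}_2) \;=\; K_{J,{\bf T}_1}^*\, q({\bf V}_1,{\bf V}_2)\, K_{J,{\bf T}_1},
$$
from which $\|q({\bf T}_1,{\bf T}_2)\|\le \|q({\bf V}_1,{\bf V}_2)\|$ follows at once since $K_{J,{\bf T}_1}$ is an isometry (${\bf T}_1$ being pure).

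To handle the matrix case $[q_{rs}]_k\in M_k(\cQ_{\bf n}^*)$, I would apply the same identity coordinatewise and factor through the ampliated isometry $K_{J,{\bf T}_1}\otimes I_{\CC^k}$ to obtain
$$
[q_{rs}({\bf T}_1,{\bf T}_2)]_{k} \;=\; (K_{J,{\bf T}_1}\otimes I_{\CC^k})^*[q_{rs}({\bf V}_1,{\bf V}_2)]_{k}(K_{J,{\bf T}_1}\otimes I_{\CC^k}),
$$
yielding the desired inequality. The one step that needs care is the reordering argument: it relies crucially on the normal-ordered form $Z_\alpha Z_\beta'(Z_\sigma')^*Z_\gamma^*$ defining $\cQ_{\bf n}^*$, together with the twin commutation properties of $({\bf T}_1,{\bf T}_2)$ and $({\bf V}_1,{\bf V}_2)$, which is precisely why the result is stated for $\cQ_{\bf n}^*$ rather than for general noncommutative $\ast$-polynomials. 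This is also why we no longer need the dimensional matching condition from Theorem \ref{ando}: we are not trying to represent $p({\bf T}_1,{\bf T}_2)$ as $K^*p({\bf V})K$ for arbitrary polynomials, only for the restricted class $\cQ_{\bf n}^*$, and the existence of $U\in \cU_{\bf T}^\cK$ (and hence of $\psi({\bf C}_1)$) is unconditional.
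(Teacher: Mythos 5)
Your proof is correct and follows the same route the paper intends: the paper omits the argument, saying only that it ``uses Theorem \ref{dil2} and is similar to the proof of Theorem \ref{ando}'', and your insertion of $K_{J,{\bf T}_1}^*K_{J,{\bf T}_1}=I$ between the co-analytic and analytic halves of each hereditary monomial, followed by reordering via the entrywise commutation of ${\bf V}_1$ with ${\bf V}_2=\psi({\bf C}_1)$ (a multi-analytic operator with respect to ${\bf B}_1$), is exactly the intended filling-in. One small quibble with your closing remark: the dimension condition of Theorem \ref{ando} is dropped solely because $\cU_{\bf T}^\cK$ is always nonempty, not because of the passage to $\cQ_{\bf n}^*$, which --- containing the hereditary extensions of all of $\CC\left<{\bf Z},{\bf Z}'\right>$ --- is an enlargement rather than a restriction of the polynomial class.
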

\begin{proof} The proof uses Theorem \ref{dil2} and is similar to the proof of Theorem \ref{ando}. We shall omit it.
\end{proof}

For any  polynomial $p\in \CC\left<{\bf Z},{\bf Z}\right>$,  define
$
\|p\|_u:=\sup \|p({\bf T}_1, {\bf T}_2)\|,
$
where the supremum is taken over all pairs
$({\bf T}_1, {\bf T}_2)\in \cD_f(\cH)\times_c \cE_g(\cH)$ and any Hilbert space $\cH$.
Then  $\|\cdot\|_u$ defines an algebra  norm on $\CC\left<{\bf Z},{\bf Z}'\right>$. Since the proof is very similar to that of Lemma 2.4 from \cite{Po-Ando}, we  omit it.
 If for $[p_{ij}]_k\in M_k(\CC\left<{\bf Z},{\bf Z}\right>)$, we set
$$
\|[p_{ij}]\|_{u,k}:=\|[p_{ij}]_k\|_u:=\sup \|[p_{ij}({\bf T}_1, {\bf T}_2)]\|,
$$
where the supremum is taken over all pairs
$({\bf T}_1, {\bf T}_2)\in \cD_f(\cH)\times_c \cE_g(\cH)$ and any Hilbert space $\cH$, we obtain a sequence of norms on the matrices over $\CC\left<{\bf Z},{\bf Z}'\right>$. We call $\left(\CC\left<{\bf Z},{\bf Z}'\right>, \|\cdot\|_{u,k}\right)$ the universal operator algebra for the
 bi-domain $ \cD_f(\cH)\times_c \cE_g(\cH)$.

In what follows, we  prove that the abstract bi-domain
$$
\cD_f\times_c \cE_g:=\{\cD_f(\cH)\times_c \cE_g(\cH): \ \cH \text{ is a Hilbert space}\}
$$
has a universal model $({\bf W}_1\otimes I_{\ell^2},  \psi({\bf  \Lambda}_1))$,
 where ${\bf W}_1=(W_{1,1},\ldots, W_{1,n_1})$ and ${\bf \Lambda}_1=(\Lambda_{1,1},\ldots, \Lambda_{1,n_1})$ are the weighted left and right creation operators  associated with  the regular domain $\cD_f$, respectively, and
$$
\psi({\bf  \Lambda}_1)=(\psi_1({\bf \Lambda}_1),\ldots, \psi_{n_2}({\bf \Lambda}_1))\in \cE_g(F^2(H_{n_1})\otimes \ell^2)
$$
 is a certain multi-analytic operator with respect to ${\bf W}_1$.

\begin{theorem} \label{ando11}  There is a multi-analytic operator
$ \psi({\bf  \Lambda}_1)=(\psi_1({\bf \Lambda}_1),\ldots, \psi_{n_2}({\bf \Lambda}_1))\in \cE_g(F^2(H_{n_1})\otimes \ell^2)$
such that
$$
\|[p_{rs}({\bf T}_1,{\bf T}_2)]_{k}\|\leq  \|[p_{rs}({\bf W}_1\otimes I_{\ell^2},  \psi({\bf  \Lambda}_1))]_{k}\|, \qquad  p_{rs}\in \CC\left<{\bf Z}, {\bf Z}'\right>,
$$
 for any  $({\bf T}_1, {\bf T}_2)\in \cD_f(\cH)\times_c \cE_g(\cH)$ and any $k\in \NN$.
\end{theorem}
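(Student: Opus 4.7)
The plan is to realize $\psi({\bf \Lambda}_1)$ as an orthogonal direct sum of pair-specific dilations supplied by the characterization following Theorem \ref{dil2}, applied with $J=\{0\}$ (so that $\cV_J$ is the set of pure elements of $\cD_f$, $\cN_J=F^2(H_{n_1})$, ${\bf B}_1={\bf W}_1$, and ${\bf C}_1={\bf \Lambda}_1$).

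First I would reduce to pairs whose first coordinate is pure. Given arbitrary $({\bf T}_1,{\bf T}_2)\in\cD_f(\cH)\times_c\cE_g(\cH)$ and any $r\in(0,1)$, the rescaled pair $(r{\bf T}_1,{\bf T}_2)$ still belongs to the bi-domain and $r{\bf T}_1$ is pure in $\cD_f(\cH)$; norm-continuity of polynomial evaluation then gives $\|[p_{rs}(r{\bf T}_1,{\bf T}_2)]_k\|\to\|[p_{rs}({\bf T}_1,{\bf T}_2)]_k\|$ as $r\uparrow 1$, so it suffices to establish the inequality when ${\bf T}_1$ is pure. For any such pair, the remark following Theorem \ref{dil2} yields a Hilbert space $\cD^{({\bf T})}$, a multi-analytic operator $\psi^{({\bf T})}({\bf \Lambda}_1)\in\cE_g(F^2(H_{n_1})\otimes\cD^{({\bf T})})$ with respect to ${\bf W}_1$, and a co-invariant subspace of $F^2(H_{n_1})\otimes\cD^{({\bf T})}$ identified with $\cH$ on which the adjoints of ${\bf W}_1\otimes I$ and $\psi^{({\bf T})}({\bf \Lambda}_1)$ restrict to ${\bf T}_1^*$ and ${\bf T}_2^*$. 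Compression to this co-invariant subspace then forces $\|[p_{rs}({\bf T}_1,{\bf T}_2)]_k\|\le\|[p_{rs}({\bf W}_1\otimes I_{\cD^{({\bf T})}},\psi^{({\bf T})}({\bf \Lambda}_1))]_k\|$ for every matrix polynomial $[p_{rs}]_k$.

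To assemble a single universal model, I would first note that only separable $\cH$ need be considered (since any finite collection of polynomial actions probes only a separable subspace), and that each $\cD^{({\bf T})}$ may then be chosen separable by inspecting the construction of $\cU_{\bf T}^\cK$ with $\cK$ separable. Next fix a countable $\QQ$-dense family $\cP$ of matrix polynomials, and for each $P\in\cP$ select a sequence of pure separable pairs whose values on $P$ approach $\|P\|_{u,k}$. Indexing the resulting countable collection by $s\in S$, set $\psi({\bf \Lambda}_1):=\bigoplus_{s\in S}\psi^{({\bf T}^{(s)})}({\bf \Lambda}_1)$ regarded as an operator on $F^2(H_{n_1})\otimes\bigoplus_s\cD^{({\bf T}^{(s)})}\cong F^2(H_{n_1})\otimes\ell^2$. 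Because orthogonal direct sums preserve both multi-analyticity with respect to ${\bf W}_1\otimes I_{\ell^2}$ and membership in the ellipsoid (as $\sum_{|\beta|=1}c_\beta(\bigoplus_s\psi_\beta^{(s)})(\bigoplus_s\psi_\beta^{(s)})^*=\bigoplus_s\sum_{|\beta|=1}c_\beta\psi_\beta^{(s)}(\psi_\beta^{(s)})^*\le I$), this $\psi({\bf \Lambda}_1)$ is a legitimate candidate.

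The conclusion will then follow from the chain $\|[p_{rs}({\bf W}_1\otimes I_{\ell^2},\psi({\bf \Lambda}_1))]_k\|=\sup_{s\in S}\|[p_{rs}({\bf W}_1\otimes I_{\cD^{({\bf T}^{(s)})}},\psi^{({\bf T}^{(s)})}({\bf \Lambda}_1))]_k\|\ge\|[p_{rs}]\|_{u,k}\ge\|[p_{rs}({\bf T}_1,{\bf T}_2)]_k\|$, combining density of $\cP$ with the compression inequality above. The main obstacle I anticipate is the bookkeeping needed to arrange separability of $\cD^{({\bf T})}$ uniformly across the countable index set and to confirm that the class $\cE_g$ is preserved under the chosen orthogonal direct sum on the ambient Fock space.
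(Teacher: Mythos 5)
Your proposal is correct and follows essentially the same route as the paper: rescale to reduce to the case where ${\bf T}_1$ is pure, invoke the dilation coming from Theorem \ref{dil2}/Theorem \ref{ando1} with $J=\{0\}$ to get the compression inequality for each individual pair, and then assemble the universal $\psi({\bf \Lambda}_1)$ as a countable orthogonal direct sum indexed by a $\QQ$-dense family of matrix polynomials together with approximating pairs, closing with a density argument. The paper organizes the direct sum in nested stages (over approximating pairs for a fixed polynomial, then over an enumeration of rational-coefficient polynomials, then over $k$) whereas you collapse everything into a single sum; the content is the same, though in the final density step you should make explicit that the pair $({\bf W}_1\otimes I_{\ell^2},\psi({\bf \Lambda}_1))$ itself belongs to $\cD_f\times_c\cE_g$, so that evaluation at the model is dominated by $\|\cdot\|_{u,k}$ and the approximation transfers in both directions.
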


\begin{proof}
Given a  matrix $[p_{ij}]_k\in M_k(\CC\left<{\bf Z},{\bf Z}'\right>)$, we have
$
\|[p_{ij}]_k\|_u:=\sup \|[p_{ij}({\bf T}_1, {\bf T}_2)]_k\|,
$
where the supremum is taken over all pairs
$({\bf T}_1, {\bf T}_2)\in  \cD_f(\cH)\times_c \cE_g(\cH)$ and any Hilbert space $\cH$. Using a standard argument, one can prove that the supremum is the same if we consider  only infinite dimensional separable Hilbert spaces.  Since $r{\bf T}_1$ is a pure element in $\cD_f(\cH)$ for any $r\in [0,1)$, it is clear that
$$
\|[p_{ij}]_k\|_u=\sup_{{({\bf T}_1, {\bf T}_2)\in  \cD_f\times_c \cE_g}\atop{ {\bf T}_1 \text{ pure}}} \|[p_{ij}({\bf T}_1, {\bf T}_2)]_k\|.
$$
Fix $[p_{ij}]_k\in M_k(\CC\left<{\bf X}, {\bf Y}\right>)$ and  choose a sequence $\left\{({\bf T}_1^{(m)}, {\bf T}_2^{(m)})\right\}_{m=1}^\infty$  in $ \cD_f(\cH)\times_c \cE_g(\cH)$ with $\cH$ separable and ${\bf T}_1^{(m)}$ pure element  in $\cD_f(\cH)$, and  such that
\begin{equation}
\label{sup2}
\|[p_{ij}]_k\|_u=\sup_{m}\|[p_{ij}({\bf T}_1^{(m)}, {\bf T}_2^{(m)})]_k\|.
\end{equation}
 Using Theorem \ref{ando1}, in the particular case when $J=\{0\}$,  we find,  for each $m\in \NN$,  a multi-analytic operator $\psi^{(m)}({\bf  \Lambda}_1):=(\psi_1^{(m)}({\bf  \Lambda}_1),\ldots, \psi_{n_2}^{(m)}({\bf  \Lambda}_1))$ with respect to ${\bf W}_1$,  which belongs to the  ellipsoid  $\cE_g(F^2(H_{n_1})\otimes \CC^{d(m)})$, where $d(m):=\cD_{{\bf T}_1^{(m)}}$, such that
$$
\|[p_{ij}({\bf T}_1^{(m)},{\bf T}_2^{(m)})]_k\|\leq  \|[p_{ij}({\bf W}_1\otimes I_{\CC^{d(m)}},\psi^{(m)}({\bf \Lambda}_1))]_{k }\|.
$$
Consequently, setting $\oplus_{m=1}^\infty {\bf T}_1^{(m)}:=\left(\oplus_{m=1}^\infty {T}_{1,1}^{(m)},\ldots,\oplus_{m=1}^\infty { T}_{1,n_1}^{(m)}\right)\in \cD_f(\oplus_{m=1}^\infty \cH)$,  relation \eqref{sup2} implies
\begin{equation*}
\begin{split}
\|[p_{ij}]_k\|_u&=\|[p_{ij}(\oplus_{m=1}^\infty {\bf T}_1^{(m)}, \oplus_{m=1}^\infty {\bf T}_2^{(m)})]_k\|\\
&\leq
\|[p_{ij}(\oplus_{m=1}^\infty  ({\bf W}_1\otimes I_{\CC^{d(m)}}), \oplus_{m=1}^\infty \psi^{(m)}({\bf \Lambda}_1))]_k\|\leq \|[p_{ij}]_k\|_u.
\end{split}
\end{equation*}
 This shows  that
\begin{equation}
\label{ma22}
\|[p_{ij}]_k\|_u= \|[p_{ij}({\bf W}_1\otimes I_{ \ell^2},\zeta({\bf \Lambda}_1))]_{k }\|,
\end{equation}
where $\zeta({\bf \Lambda}_1)=(\zeta_1({\bf \Lambda}_1),\ldots, \zeta_{n_2}({\bf \Lambda}_1)):=\oplus_{m=1}^\infty \psi^{(m)}({\bf \Lambda}_1))\in \cE_g(F^2(H_{n_1})\otimes \ell^2)$ is
a  multi-analytic operator with respect to ${\bf W}_1$.
Let $\CC_\QQ\left<{\bf Z}, {\bf Z}'\right>$ be the set of all polynomials with coefficients in $\QQ+i\QQ$, and let
$
[p^{(1)}_{ij}]_k, [p^{(2)}_{ij}]_k, \ldots$ be an enumeration of  the set $\{[p_{ij}]_k: \ p_{ij}\in \CC_\QQ\left<{\bf Z}, {\bf Z}'\right>\}$.
Due to relation \eqref{ma22}, for each $s\in \NN$, there is a multi-analytic operator  $\zeta^{(s)}({\bf \Lambda}_1)=(\zeta^{(s)}_1({\bf \Lambda}_1),\ldots, \zeta^{(s)}_{n_2}({\bf \Lambda}_1))\in \cE_g(F^2(H_{n_1})\otimes \ell^2)$ such that
\begin{equation}
\label{ma222}
\|[p^{(s)}_{ij}]_k\|_u= \|[p_{ij}^{(s)}({\bf W}_1\otimes I_{ \ell^2},\zeta^{(s)}({\bf \Lambda}_1))]_{k }\|,\qquad s\in \NN.
\end{equation}
Define the multi-analytic operator $\Omega_k({\bf \Lambda}_1):=\oplus_{s=1}^\infty \zeta^{(s)}({\bf \Lambda}_1)\in \cE_g(F^2(H_{n_1}\otimes \ell^2)$  and let us prove that
\begin{equation}
\label{qqij}
\|[q_{ij}]_k\|_u= \|[q_{ij}({\bf W}_1\otimes I_{ \ell^2},\Omega_k({\bf \Lambda}_1))]_{k }\|
\end{equation}
for any $[q_{ij}]_k\in M_k(\CC\left<{\bf Z}, {\bf Z}'\right>)$.
Note that relation \eqref{ma222} implies
\begin{equation}
\label{Ga2}
\|[p^{(s)}_{ij}]_k\|_u= \|[p_{ij}^{(s)}({\bf W}_1\otimes I_{ \ell^2},\Omega_k({\bf \Lambda}_1))]_{k }\|\qquad \text{for any } s\in \NN.
\end{equation}
 Fix $[q_{ij}]_k\in M_k(\CC\left<{\bf Z}, {\bf Z}'\right>)$ and $\epsilon >0$,  and choose $[p^{(s_0)}_{ij}]_k$ such that
 \begin{equation}\label{u2}
 \left\|[q_{ij}]_k-[p^{(s_0)}_{ij}]_k\right\|_u<\epsilon.
 \end{equation}
 Using relations \eqref{ma22}, \eqref{u2}, and \eqref{Ga2}, we deduce that there is $\zeta^{(q)}:=(\zeta_1^{(q)}({\bf \Lambda}_1),\ldots, \zeta_{n_2}^{(q)}({\bf \Lambda}_1))$ in the ellipsoid  $ \cE_g(F^2(H_{n_1})\otimes \ell^2)$ such that
 \begin{equation*}
 \begin{split}
 \|[q_{ij}]_k\|_u&=\|[q_{ij}({\bf W}_1\otimes I_{ \ell^2},\zeta^{(q)}({\bf \Lambda}_1))]_{k }\|
\leq\|[p_{ij}^{(s_0)}({\bf W}_1\otimes I_{ \ell^2},\zeta^{(q)}({\bf \Lambda}_1))]_{k }\| +\epsilon\\
 &\leq \|[p_{ij}^{(s_0)}]_k\|_u+\epsilon
 =
 \|[p_{ij}^{(s_0)}({\bf W}_1\otimes I_{ \ell^2},\Omega_k({\bf \Lambda}_1))]_{k }\|+\epsilon\\
 &\leq
 \|[q_{ij}({\bf W}_1\otimes I_{ \ell^2},\Omega_k({\bf \Lambda}_1))]_{k }\|+2\epsilon
 \end{split}
 \end{equation*}
for any $\epsilon>0$, which proves  relation \eqref{qqij}.
Note  that  $\psi({\bf \Lambda}_1):=\oplus_{k=1}^\infty \Omega_k({\bf \Lambda}_1)$ is
 a   multi-analytic operator which belongs to  the ellipsoid $\cE_g(F^2(H_{n_1})\otimes \ell^2)$  and
$
\|[q_{ij}]_k\|_u= \|[q_{ij}({\bf W}_1\otimes I_{ \ell^2},\psi({\bf \Lambda}_1))]_{k }\|
$
for any $[q_{ij}]_k\in M_k(\CC\left<{\bf Z}, {\bf Z}'\right>)$ and any $k\in \NN$.
The proof is complete.
\end{proof}

Theorem \ref{ando11}  shows that $\left(\CC\left<{\bf Z},{\bf Z}'\right>, \|\cdot\|_{u,k}\right)$ can be realized completely isometrically isomorphic as a concrete algebra of operators. The closed non-self-adjoint algebra generated by the operators
$ W_{1,1}\otimes I_{\ell^2},\ldots, W_{1,n_1}\otimes I_{\ell^2}, \psi_1({\bf \Lambda}_1),\ldots, \psi_{n_2}({\bf \Lambda}_1)$ and the identity is denoted by
$\cA(\cD_f\times_c\cE_g)$  and  can be seen as the universal operator algebra of the  bi-domain $\cD_f\times_c\cE_g$.

We remark that the noncommutative variety $\cV_J\times_c \cE_g$
 also has a universal model. Similarly to the proof of Theorem \ref{ando11}, one can show that  there is   a multi-analytic operator  $\psi({\bf  C}_1)=(\psi_1({\bf  C}_1),\ldots, \psi_{n_2}({\bf  C}_1))$,  with respect to ${\bf B}_1$,  in  $ \cE_g(\cN_J\otimes \ell^2)$
such that
$$
\|[p_{rs}({\bf T}_1,{\bf T}_2)]_{k}\|\leq  \|[p_{rs}({\bf B}_1\otimes I_{\ell^2},  \psi({\bf  C}_1))]_{k}\|, \qquad  p_{rs}\in \CC\left<{\bf Z}, {\bf Z}'\right>,
$$
 for any  $({\bf T}_1, {\bf T}_2)\in \cV_J(\cH)\times_c \cE_g(\cH)$ and  any $k\in \NN$.

In the end of this section, we discuss the commutative case.
Let  $J_c$ be  the $WOT$-closed two-sided ideal of the Hardy algebra $F_{n_1}^\infty(\cD_f)$ generated by the commutators $W_j W_i-W_iW_j$ for  $i,j\in \{1,\ldots, n_1\}$.
 Note that the variety $\cV_{J_c}(\cH)$ consists of all pure tuples $(X_1,\ldots, X_{n_1})\in \cD_f(\cH)$ with  commuting entries.
  The Hardy algebra $F_{n_1}^\infty(\cV_{J_c})$ is the $WOT$-closed algebra generated by the compressions  $L_i:=P_{F_s^2(\cD_f)} W_i|_{F_s^2(\cD_f)}$, $i=1,\ldots, n_1$, and the identity, where $F_s^2(\cD_f)=\cN_{J_c}$ is the   symmetric weighted Fock space
associated with the noncommutative domain $\cD_f$.  In
\cite{Po-domains},
we    prove that  $F_s^2(\cD_f)$
can be identified with  a Hilbert space $H^2(\cD_f^\circ(\CC))$ of
holomorphic functions defined on the scalar domain
$$
\cD_f^\circ(\CC):=\left\{ (\lambda_1,\ldots, \lambda_{n_1})\in \CC^{n_1}: \
\sum_{|\alpha|\geq 1} a_\alpha |\lambda_\alpha|^2<1\right\},
$$
namely,  the reproducing kernel
Hilbert space with reproducing kernel $\kappa_f:\cD_f^\circ(\CC)\times
\cD_f^\circ(\CC)$ defined by
$
\kappa_f(\mu,\lambda):=\frac{1}{1-\sum_{|\alpha|\geq 1} a_\alpha
\mu_\alpha \overline{\lambda}_\alpha}$ for $\mu,\lambda\in
C$.
 We also  identified the algebra of all
multipliers of the Hilbert space $H^2(\cD_f^\circ(\CC))$ with the Hardy algebra
$F_{n_1}^\infty(\cV_{J_c})$. Under this identification, $L_i$ is the multiplier $M_{\lambda_i}$ by the coordinate function.
We denote ${\bf M}_{\lambda,n_1}:=(M_{\lambda_1},\ldots, M_{\lambda_{n_1}})$. Similarly, one can identify the Hardy algebra $R_{n_1}^\infty(\cV_{J_c})$ with
the algebra of all
multipliers of the Hilbert space $H^2(\cD_{\tilde f}^\circ(\CC))$, where $\tilde{f} :=\sum_{|\alpha|\geq 1} a_{\tilde \alpha} Z_\alpha$. Note also that $\cD_f^\circ(\CC)=\cD_{\tilde f}^\circ(\CC)$.
\begin{theorem}\label{commutative} Let $f\in \CC\left<{\bf Z}\right>$ and $g\in \CC\left<{\bf Z}'\right>$ be two positive regular noncommutative polynomials and let
$({\bf T}_1, {\bf T}_2)\in \cD_f(\cH)\times_c \cD_g(\cH)$ be such that
each tuple ${\bf T}_j=(T_{j,1},\ldots, T_{j,n_j})$ has commuting entries and $d_j:=\rank \Delta_{{\bf T}_j}$, $j=1,2$. Then there exist multipliers $M_{\Phi_f}$ and $M_{\Phi_g}$ of $H^2(\cD_f^\circ)\otimes \CC^{d_1}$ and $H^2(\cD_g^\circ)\otimes \CC^{d_2}$, respectively, such that $M_{\Phi_f}\in \cE_f(H^2(\cD_f^\circ))$, $M_{\Phi_g}\in \cE_g(H^2(\cD_g^\circ))$, and
$$
\|[p_{rs}({\bf T}_1,{\bf T}_2)]_{k}\|\leq \min \left\{ \|[p_{rs}({\bf M}_{\lambda, n_1}\otimes I_{\CC^{d_1}},  M_{\Phi_f})]_{k}\|,  \|[p_{rs}(M_{\Phi_g}, {\bf M}_{\lambda, n_2}\otimes I_{\CC^{d_2}})]_{k}\|\right\}
$$
for any matrix $ [p_{rs}]_k\in M_k(\CC\left<{\bf Z}, {\bf Z}'\right>)$
 and any    $k\in \NN$.
\end{theorem}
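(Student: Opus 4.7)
The plan is to specialize Theorem \ref{ando1} to the commutative variety $\cV_{J_c}$ inside $\cD_f$ and then transport the resulting dilation into the multiplier picture via the identification $\cN_{J_c}=F_s^2(\cD_f)\simeq H^2(\cD_f^\circ(\CC))$ recalled just before the theorem. Under this identification the constrained left creation tuple ${\bf B}_1=(B_{1,1},\ldots,B_{1,n_1})$ corresponds to ${\bf M}_{\lambda,n_1}$, while $\cR_{n_1}^\infty(\cV_{J_c})$ is realized as the multiplier algebra of $H^2(\cD_f^\circ)$; consequently every element of $\cR_{n_1}^\infty(\cV_{J_c})\bar\otimes B(\cE)$ becomes an operator-valued multiplier of $H^2(\cD_f^\circ)\otimes\cE$.

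First I would handle the case where ${\bf T}_1$ is pure. Since its entries commute, ${\bf T}_1\in\cV_{J_c}(\cH)$ and $({\bf T}_1,{\bf T}_2)\in{\bf D}_{(f,g)}^{J_c}(\cH)$ (using $\cD_g\subseteq\cE_g$). Picking any $U\in\cU_{\bf T}^\cK$, Theorem \ref{ando1} applied with $J=J_c$ furnishes a multi-analytic operator $\psi({\bf C}_1)=(\psi_1({\bf C}_1),\ldots,\psi_{n_2}({\bf C}_1))\in\cE_g(\cN_{J_c}\otimes\cD_{{\bf T}_1})$ with
$$\|[p_{rs}({\bf T}_1,{\bf T}_2)]_{k}\|\leq\|[p_{rs}({\bf B}_1\otimes I_{\cD_{{\bf T}_1}},\psi({\bf C}_1))]_{k}\|$$
for every $[p_{rs}]_k\in M_k(\CC\langle{\bf Z},{\bf Z}'\rangle)$. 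Transporting through the identification above, ${\bf B}_1\otimes I_{\cD_{{\bf T}_1}}$ becomes ${\bf M}_{\lambda,n_1}\otimes I_{\CC^{d_1}}$ (choosing $\cD_{{\bf T}_1}\simeq\CC^{d_1}$), and $\psi({\bf C}_1)$ becomes an $n_2$-tuple of multipliers assembled into $M_{\Phi_f}$ of $H^2(\cD_f^\circ)\otimes\CC^{d_1}$ inheriting the ellipsoid constraint; this yields the first bound in the $\min$.

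For a general ${\bf T}_1\in\cD_f(\cH)$ with commuting entries, apply the pure case to the pairs $(r{\bf T}_1,{\bf T}_2)$ with $r\in[0,1)$: each $r{\bf T}_1$ is pure and still commutes with ${\bf T}_2$, producing multipliers $M_{\Phi_f^{(r)}}$ together with the inequality for $(r{\bf T}_1,{\bf T}_2)$. Since $r{\bf T}_1\to{\bf T}_1$ in norm, $p_{rs}(r{\bf T}_1,{\bf T}_2)\to p_{rs}({\bf T}_1,{\bf T}_2)$. The principal obstacle is that the natural multiplicity arising from this approximation is $\dim\cD_{r{\bf T}_1}=\dim\cH$ rather than $d_1=\rank\Delta_{{\bf T}_1}$. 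I would address this by using the strong convergence $\Delta_{r{\bf T}_1}\to\Delta_{{\bf T}_1}$ to compress the $M_{\Phi_f^{(r)}}$ to the closed range of $\Delta_{{\bf T}_1}$, and then extracting a WOT-cluster point of the compressed net in the (WOT-compact) intersection of the unit ball of the multiplier algebra with the relevant ellipsoid; the inequality persists in the limit by continuity of polynomial evaluation in the first slot. The second bound in the $\min$ is then obtained by a completely symmetric argument after interchanging the roles of $({\bf T}_1,f,n_1,d_1)$ and $({\bf T}_2,g,n_2,d_2)$, replacing $\cV_{J_c}\subset\cD_f$ by the analogous commutative variety inside $\cD_g$; the only serious difficulty in either half is the multiplicity reduction $\dim\cH\to d_j$ in the limiting step.
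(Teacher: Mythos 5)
Your main line of attack is exactly the paper's: the proof given there consists of applying Theorem \ref{ando1} with $J=J_c$ to $({\bf T}_1,{\bf T}_2)$ and, symmetrically, to $({\bf T}_2,{\bf T}_1)$ with the commutative ideal inside $F_{n_2}^\infty(\cD_g)$, and then transporting ${\bf B}_1\otimes I_{\cD_{{\bf T}_1}}$ and $\psi({\bf C}_1)$ through the identification $\cN_{J_c}=F_s^2(\cD_f)\simeq H^2(\cD_f^\circ(\CC))$, under which ${\bf B}_1$ becomes ${\bf M}_{\lambda,n_1}$ and $\cR_{n_1}^\infty(\cV_{J_c})\bar\otimes B(\CC^{d_1})$ becomes the matrix-valued multiplier algebra. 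Your first paragraph and your treatment of the pure case reproduce this faithfully (note only that the dilation tuple of ${\bf T}_2$ lies in the ellipsoid $\cE_g$, per Theorem \ref{dil2}(iii), for the first bound, and in $\cE_f$ for the second).

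Where you depart from the paper is the reduction from general $({\bf T}_1,{\bf T}_2)\in\cD_f(\cH)\times_c\cD_g(\cH)$ to the pure case, and this is where your argument has a genuine gap. The paper's proof simply places $({\bf T}_1,{\bf T}_2)$ in ${\bf D}_{(f,g)}^{J_c}(\cH)$, i.e., it works only with pure tuples (recall $\cV_{J_c}$ consists of pure elements by definition); it does not attempt your $r\to1$ limit. Your limiting scheme fails at two points. First, the inequality for $(r{\bf T}_1,{\bf T}_2)$ comes from compressing by the isometric Poisson kernel $K_{J_c,r{\bf T}_1}$, whose range sits in $\cN_{J_c}\otimes\cD_{r{\bf T}_1}$ and not in $\cN_{J_c}\otimes\cD_{{\bf T}_1}$; cutting the coefficient space of $M_{\Phi_f^{(r)}}$ down to $\cD_{{\bf T}_1}$ destroys both the multiplicativity needed to evaluate $p_{rs}$ and the intertwining with the kernel, so the dilation inequality need not survive the compression (in degenerate cases, e.g.\ ${\bf T}_1$ a single isometry, $d_1=0$ while $\dim\cD_{r{\bf T}_1}=\dim\cH$). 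Second, even with fixed multiplicity, a WOT cluster point $M_{\Phi_f}$ of the net $M_{\Phi_f^{(r)}}$ only yields $\|[p_{rs}(\cdot,M_{\Phi_f})]\|\le\liminf_r\|[p_{rs}(\cdot,M_{\Phi_f^{(r)}})]\|$, i.e.\ lower semicontinuity of the norm, which is the wrong direction: you need the limiting operator's norm to dominate the approximants' norms; moreover $p_{rs}$ involves products of the $\Phi_j^{(r)}$, and multiplication is not WOT-continuous, so the cluster point need not even be the WOT limit of the evaluated polynomials. Either restrict the statement to the pure case (as the paper's proof implicitly does) or supply a genuinely stronger limiting argument (e.g.\ norm convergence of the unitaries $U(r)$ together with convergence of the model subspaces), which is substantially more delicate than what you sketch.
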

\begin{proof} Applying Theorem \ref{ando1} to the pairs
$({\bf T}_1, {\bf T}_2)\in {\bf D}_{(f,g)}^{J_c}(\cH)$ and $({\bf T}_2, {\bf T}_1)\in {\bf D}_{(g,f)}^{J_c}(\cH)$ and using the   the identifications preceding this theorem, one can easily complete the proof.
\end{proof}
We should mention that all the results of the present paper concerning And\^ o type dilations and inequalities can be written in the commutative multivariable  setting of Theorem \ref{commutative}. Moreover,  in  the particular case when $n_1=n_2=1$, we obtain  extensions of And\^ o's results \cite{An}, Agler-McCarthy's inequality \cite{AM}, and Das-Sarkar extension \cite{DS},  to  larger classes of commuting operators.

A few remarks concerning the matrix case when $n_1=n_2=1$ are necessary. If $T_1$ and $T_2$  are commuting contractive matrices with no eigenvalues of modulus 1,  Agler and McCarthy  proved, in their remarkable paper  \cite{AM}, that the pair $(T_1, T_2)$ has a  co-isometric  extension $(M_z^*, M_\Phi^*)$   on $H^2\otimes \CC^d$  and, for any polynomial $p$ in two variables,
\begin{equation*}
\|p(T_1, T_2)\|\leq \|p(M_z\otimes I_{\CC^{d}}, M_\Phi)\|\leq \|p\|_V,
\end{equation*}
where $V$ is a distinguished variety in $\DD^2$ depending on $T_1$ and $T_2$.

Let $f\in \CC\left[z\right]$ be a positive regular  polynomial in one variable and let $T\in \cD_f(\CC^n)$   be  an $n\times n$  matrix  which is pure with respect to the regular domain $\cD_f$, i.e. $\text{\rm SOT-}\lim \limits_{m\to\infty} \Phi_{f,{\bf T}}^m(I)=0$.
Let  $m_{T}(z)=(z-\lambda_1)^{n_1}\cdots (z-\lambda_k)^{n_k}$ be the minimal polynomial of $T$ and  let $J_{m_T}$ be  the $WOT$-closed two sided ideal of the Hardy algebra $F_1^\infty(\cD_f)$ generated by $m_{T}({\bf S})$, where ${\bf S}$ is the weighted shift associated with the domain $\cD_f$.   Note that the variety $\cV_{J_{m_T}}(\CC^n)$ consists of all $n\times n$ matrices $X$  such that $m_T(X)=0$. On the other hand,
$B:=P_{\cN_{J_{m_T}}} {\bf S}|_{\cN_{J_{m_T}}}$ is the universal model of the variety $\cV_{J_{m_T}}$, and the ellipsoid $\cE_f(\CC^n)$ is a  matrix-valued ball.
In this case, the analytic operators with respect to $B$ are the elements $\varphi(B)$ of the Hardy algebra $R_1^\infty(\cV_{J_{m_T}})$.

\begin{theorem} \label{last} Let $T_1$ and $T_2$ be commuting   matrices which are pure elements in $\cD_f(\CC^n)$ and $\cD_g(\CC^n)$     and
  let $B_1$ and $B_2$ be their universal models, respectively. If  $d_j:=\dim (I-T_jT_j^*)^{1/2}\cH$, $j=1,2$, then there exist matrix-valued analytic operators $\varphi_1(B_1)\in \cE_f(\cN_{J_{m_{T_1}}}\otimes \CC^{d_1})$ with respect to $B_1$ and    $\varphi_2(B_2)\in \cE_g(\cN_{J_{m_{T_2}}}\otimes \CC^{d_2})$ with respect to $B_2$, such that
$$
\|[p_{rs}(T_1, T_2)]_k\|\leq \min\left\{ \|[p_{rs}(B_1\otimes I_{\CC^{d_1}},\varphi_1(B_1))]_k\|, \|[p_{rs}(\varphi_2(B_2), B_2\otimes I_{\CC^{d_2}})]_k\|\right\}, \qquad
$$
for any   $[p_{rs}]_k\in M_k(\CC[z, w])$.
 \end{theorem}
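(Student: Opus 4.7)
The plan is to apply Theorem \ref{dil2} twice, swapping the roles of $(T_1,T_2)$ the second time, and then intersect the two bounds. Since $T_j$ is a matrix, its minimal polynomial $m_{T_j}$ annihilates it, and any element of the WOT-closed ideal $J_{m_{T_j}}$ is a WOT-limit of finite sums $\sum_k p_k\cdot m_{T_j}(\mathbf{S})\cdot q_k$; applying the $F_1^\infty(\cD_f)$-functional calculus at $T_j$ kills each summand and the limit, so $T_j \in \cV_{J_{m_{T_j}}}(\CC^n)$. Combined with $T_1T_2=T_2T_1$, this places the pair $(T_1,T_2)$ inside the bi-variety $\mathbf{D}^{J_{m_{T_1}}}_{(f,g)}(\CC^n)$ with $T_1$ living in the variety, and symmetrically $(T_2,T_1) \in \mathbf{D}^{J_{m_{T_2}}}_{(g,f)}(\CC^n)$.

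Applied to $(T_1,T_2)$ with $n_1=n_2=1$, $J=J_{m_{T_1}}$, Theorem \ref{dil2} produces a contractive multi-analytic operator $\psi(\mathbf{C}_1)\in\cE_g(\cN_{J_{m_{T_1}}}\otimes \cD_{T_1})$ (with respect to $B_1$) and the intertwining identity
\begin{equation*}
K_{J_{m_{T_1}},T_1}\,T_1^{*\alpha}T_2^{*\beta}\;=\;\bigl(B_1^{*\alpha}\otimes I_{\cD_{T_1}}\bigr)\,\psi(\mathbf{C}_1)^{*\beta}\,K_{J_{m_{T_1}},T_1},\qquad \alpha,\beta\in\FF_1^+.
\end{equation*}
Renaming $\varphi_1(B_1):=\psi(\mathbf{C}_1)$ to emphasize that it is a multi-analytic operator with respect to $B_1$ (hence $\varphi_1(B_1)$ commutes with $B_1\otimes I_{\cD_{T_1}}$), any polynomial $p(z,w)\in\CC[z,w]$ can be evaluated on both commuting pairs and, taking adjoints above, one obtains $p(T_1,T_2)K^*_{J_{m_{T_1}},T_1} = K^*_{J_{m_{T_1}},T_1}\,p(B_1\otimes I,\varphi_1(B_1))$. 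Since $T_1$ is pure in $\cV_{J_{m_{T_1}}}(\CC^n)$, the constrained Poisson kernel $K_{J_{m_{T_1}},T_1}$ is an isometry, so $p(T_1,T_2)=K^*_{J_{m_{T_1}},T_1}\,p(B_1\otimes I_{\CC^{d_1}},\varphi_1(B_1))\,K_{J_{m_{T_1}},T_1}$; tensoring with $I_{\CC^k}$ upgrades this to matrices of polynomials and yields the first half of the inequality. The symmetric application to $(T_2,T_1)$ produces $\varphi_2(B_2)\in\cE_f(\cN_{J_{m_{T_2}}}\otimes\CC^{d_2})$ and the dual bound; the minimum of the two completes the proof.

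The main obstacle I anticipate is verifying that $\varphi_1(B_1)$ is genuinely a $d_1\times d_1$ matrix-valued multi-analytic operator, rather than one with infinite-dimensional coefficient space. Theorem \ref{dil2} is phrased via the extension set $\cU^\cK_{\mathbf T}$, where one adjoins a possibly infinite-dimensional auxiliary Hilbert space $\cK$ to make the isometry \eqref{iso2} extend to a unitary; the Schur-type formula in Lemma \ref{strong-limit} then involves the block $D$ acting on $\cD_{T_2}\oplus\cK$. The resolution is to trace through that formula carefully: the blocks $\mathbf{C}^*,\mathbf{B}^*,(I-\mathbf{Q})^{-1}$ and the row of shifts all live in the intermediate internal factor, but the outermost factor $I\otimes A^*$ (and, via $\mathbf{C}^*$, the target of the main Neumann-series term) lands in $\cN_{J_{m_{T_1}}}\otimes\cD_{T_1}$, while the argument of $I\otimes B^*$ is indexed by $\cD_{T_1'}=\cD_{T_1}$. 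Since $\cH=\CC^n$ is finite-dimensional, $d_1=\dim\cD_{T_1}<\infty$, so each entry $\psi_{(\beta)}(\mathbf{C}_1)$ is an element of $\cR_1^\infty(\cV_{J_{m_{T_1}}})\bar\otimes M_{d_1}$ regardless of how large $\cK$ must be. The corresponding verification on the $T_2$-side is identical, delivering the matrix-valued $\varphi_2(B_2)$ required by the statement.
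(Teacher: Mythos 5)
Your proof is correct and follows essentially the same route as the paper: the paper's one-line proof applies Theorem \ref{ando1} to the pairs $(T_1,T_2)$ and $(T_2,T_1)$, and Theorem \ref{ando1} is itself deduced from Theorem \ref{dil2} exactly as you do (the intertwining identity through the constrained Poisson kernel, which is an isometry by purity, followed by taking norms and intersecting the two bounds). Your verification that the coefficient space is $\cD_{T_1}\cong\CC^{d_1}$ irrespective of the auxiliary space $\cK$, and your placement of $\varphi_1(B_1)$ in $\cE_g$ and $\varphi_2(B_2)$ in $\cE_f$ (which is what Theorem \ref{dil2} actually delivers, the theorem's statement having the two ellipsoids interchanged), are both consistent with the paper's construction.
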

 \begin{proof}
 Applying Theorem \ref{ando1} to the pairs
$({T}_1, {T}_2)\in {\bf D}_{(f,g)}^{{J_{m_{T_1}}}}(\cH)$ and $({ T}_2, { T}_1)\in {\bf D}_{(g,f)}^{{J_{m_{T_2}}}}(\cH)$, the result follows.
 \end{proof}

\smallskip

       %

\end{document}